\titleclass{\subsubsubsection}{straight}[\subsection]
\newcounter{subsubsubsection}[subsubsection]
\renewcommand\thesubsubsubsection{\thesubsubsection.\arabic{subsubsubsection}}
\renewcommand\theparagraph{\thesubsubsubsection.\arabic{paragraph}} 
\newcommand{\RN}[1]{%
  \textup{\uppercase\expandafter{\romannumeral#1}}%
}
\titleformat{\section}{\normalfont\Large\bfseries}{\S\thesection}{1em}{}[]
\newtheorem{Theorem}{Theorem}[section]
\newtheorem{Corollary}[Theorem]{Corollary}
\newtheorem{Lemma}[Theorem]{Lemma}
\newtheorem{Definition}[Theorem]{Definition}
\newtheorem{Remark}[Theorem]{Remark}
\def\bF{\mathbb{F}}
\def\Fq{\mathbb{F}_q}
\def\GL{\mathrm{GL}}
\def\PG{\mathrm{PG}}
\def\PGL{\mathrm{PGL}}
\definecolor{amber(sae/ece)}{rgb}{1.0, 0.49, 0.0}
	\definecolor{darkcyan}{rgb}{0.0, 0.55, 0.55}
	\definecolor{darkseagreen}{rgb}{0.56, 0.74, 0.56}
	\definecolor{salmon}{rgb}{1.0, 0.55, 0.41}
\definecolor{ForestGreen}{RGB}{34,139,34}
\definecolor{carminepink}{rgb}{0.92, 0.3, 0.26}
	\definecolor{electricyellow}{rgb}{1.0, 1.0, 0.0}
	\definecolor{chromeyellow}{rgb}{1.0, 0.65, 0.0}
	\definecolor{babyblue}{rgb}{0.54, 0.81, 0.94}
 \def\bF{\mathbb{F}}
\def\Fq{\mathbb{F}_q}
\def\GL{\mathrm{GL}}
\def\F2h{\mathbb{F}_{2^h}}
\def\PG{\mathrm{PG}}
\def\PGL{\mathrm{PGL}}
\def\cP{\mathcal{P}}
\def\cV{\mathcal{V}}
\def\cZ{\mathcal{Z}}
\def\cC{\mathcal{C}}
\def\cH{\mathcal{H}}
\def\cH{\mathcal{H}}
\title{Webs and squabs of conics over finite fields}
\author{Nour Alnajjarine\footnote{University  of Rijeka; Lebanese International University (\texttt{alnajjarine@gmail.com})} \;  and \;
Michel Lavrauw\footnote{University of Primorska; Vrije Universiteit Brussel (\texttt{michel.lavrauw@famnit.upr.si})}}
\begin{document} 
\maketitle
\begin{abstract}
This paper is a contribution towards a solution for the longstanding open problem of classifying linear systems of conics over finite fields initiated by L. E. Dickson in 1908, through his study of the projective equivalence classes of pencils of conics in $\PG(2,q)$, for $q$ odd. In this paper a set of complete invariants is determined for the projective equivalence classes of webs and of squabs of conics in $\PG(2,q)$, both for $q$ odd and even.
Our approach is mainly geometric, and involves a comprehensive study of the geometric and combinatorial properties of the Veronese surface in $\PG(5,q)$. The main contribution is the determination of the distribution of the different types of hyperplanes incident with the $K$-orbit representatives of points and lines of $\PG(5,q)$, where $K\cong \PGL(3,q)$, is the subgroup of $\PGL(6,q)$ stabilizing the Veronese surface.  
\end{abstract}
\textbf{Keywords:}\hspace{0.2cm}Veronese Surface,\hspace{0.2cm} Linear Systems,\hspace{0.2cm} Webs of Conics,\hspace{0.2cm} Squabs of Conics,\hspace{0.2cm} Finite Fields, \hspace{0.2cm} Galois Geometry.

\section{Introduction}

Let ${\mathcal{F}}_d(n,\mathbb{F})$ be the space of forms of degree $d$ defined on the $n$-dimensional projective space $\PG(n, \mathbb{F})$. {\it Linear systems} of hypersurfaces of degree $d$ are subspaces of the projective geometry $\PG({\mathcal{F}}_d(n,\mathbb{F}))$ associated with ${\mathcal{F}}_d(n,\mathbb{F})$. In particular, $1$, $2$, 
$3$ and $4$-dimensional subspaces of 
$\PG({\mathcal{F}}_2(2,\mathbb{F}))$ are called pencils, nets,  webs and squabs of conics. The study of linear systems of conics enjoys a long and rich history, dating back to the late-$19$th century. Scholars such as C. Jordan were among the first to investigate these systems, exploring their properties over both real and complex numbers \cite{jordan1,jordan2}, while C. Segre made significant contributions in the study of pencils of quadrics in higher dimensions \cite{segre}, particularly focusing on algebraically closed fields with characteristics different from two. Their pioneering work laid the foundation for subsequent research at the start of the 20th century on linear systems of conics (including over finite fields) by Dickson, Campbell, Wilson, Wall, and others \cite{ campbell,campbell2,wilson,wall,lines,nets,solidsqeven,planesqeven}. We direct the reader to \cite{lines} for a detailed elucidation on the insufficiency of the elementary divisor method employed by C. Segre and fellow scholars in addressing the finite field case, contrasting its efficacy in the case of algebraically closed fields.
We also refer to \cite{AbEmIa}, which considers nets over algebraically closed fields of characteristic zero and their relation to Artinian algebras of length 7, and which includes an interesting historical account on nets of conics
in the appendices A and B.

\bigskip

The problem of classifying linear systems of conics was first studied by Jordan and Wall who classified pencils and nets in \cite{jordan1,jordan2,wall} over $\mathbb{R}$ and $\mathbb{C}$.  For finite fields, the narrative is more complicated. We direct readers to references \cite{lines} and \cite{solidsqeven} for historical remarks and a discussion of the findings concerning pencils of conics, initially investigated by Dickson over finite fields of odd characteristic and subsequently by Campbell for even characteristic. Dickson's paper, published in 1908, contains a thorough study and a complete classification of pencils of conics over finite fields of odd order, using a purely algebraic approach. Campbell's paper, published in 1927, does not give a complete classification, see \cite[Section 1.2]{lines} for further details. Nets over finite fields of odd order were studied by Wilson in 1914 \cite{wilson}, and for even order, these nets were studied by Campbell in 1928 \cite{campbell2}. Although both studies have their merits, neither provides a complete classification of nets of conics. 
For the odd characteristic case, we refer to \cite[Section 9]{nets} for a classification of nets of rank one and for an explanation of some of the shortcomings of Wilson's treatment. For the even characteristic case we refer to \cite{planesqeven} were we  pointed out 
 some errors in Campbell’s work, and   classified nets of conics with non-empty base.

\bigskip

The history on the problem of classifying linear systems of conics over finite fields, has made it evident that a purely computational approach will unlikely lead to much further progress. In particular, in order to complete the classification of nets of conics over finite fields, a more detailed study of the different linear systems and their interrelations is needed. This need for a new approach has triggered the interest in extending our current understanding of linear systems of conics, beyond merely classifying them. One of these approaches involves the distinction between projectively inequivalent systems using a set of geometric and combinatorial invariants.
Powerful combinatorial invariants (called {\it orbit distributions}, see Section \ref{pre} below) introduced in \cite{lines} and \cite{solidsqeven}, were used to determine the number of the different types of conics contained in pencils of conics in $\PG(2,q)$. In \cite{LaPoSh2021} it was shown that, in the case of nets of rank one over finite field of odd order, these combinatorial invariants give in fact a complete invariant for the equivalence classes of these nets.

\bigskip

In addition to the above mentioned new ideas, and in contrast to the purely algebraic method employed by Jordan, Dickson, Campbell, and Wilson, our approach is geometric and combinatorial, and is based on the correspondence between linear systems of conics in $\PG(2,q)$ and subspaces of the ambient space $\PG(5,q)$ of the quadric Veronesean. This connection is well-known and classical, and will be outlined in Section \ref{pre}. The study of linear systems of conics in $\PG(2,q)$ up to projective equivalence is equivalent to the study of $K$-orbits of subspaces of $\PG(5, q)$, where $K$ is the subgroup of $\PGL(6,q)$ obtained by lifting the natural action of $\PGL(3,q)$ on $\PG(2,q)$ through the Veronese map from $\PG(2,q)$ to $\PG(5,q)$. The proofs of the results in this paper are a combination of geometric, algebraic, and combinatorial techniques, and some ideas came from initial computational data obtained through the GAP \cite{GAP} package FinInG \cite{fining}.

\bigskip

In this paper, the combinatorial and geometric properties of the quadric Veronesean in $\PG(5,q)$ are used to determine the number of different types of conics contained in squabs and webs of conics in $\PG(2,q)$. These numbers serve as combinatorial invariants for webs and squabs of conics, and they are determined by studying the different types of hyperplanes ($K$-orbits) incident with their associated points and lines of $\PG(5,q)$. Note that the $K$-orbits on points of $\PG(5,q)$ are well-known and quite simple, while $K$-orbits of lines of $\PG(5,q)$ were classified in \cite{lines}, based on the canonical forms of tensors of format $(2,3,3)$ from \cite{canonical}.
Further, we prove in Theorem  \ref{webs2} that the intersection of a line of $\PG(5,q)$ with the secant variety of the Veronese surface completely determines the singularity of its associated web of conics in $\PG(2,q)$. Consequently, we obtain a number of characterisations of squabs and webs of conics over finite fields  highlighted in Sections \ref{squabs} and \ref{webs}. In particular, we show in Corollary \ref{cor:squabs} that any two squabs of conics in $\PG(2,q)$ having the same number of double lines and the same number of non-singular conics are projectively equivalent.

\bigskip

\par The paper is structured as follows. The proofs of our main results are given in Sections  \ref{squabs} and \ref{webs}. In particular, we  determine in Section \ref{squabs} the number of different types
of conics contained in squabs of conics in $\PG(2,q)$. We summarize these invariants in Tables \ref{table_hypOD_points} and \ref{table_hypOD_points_even}. In Section \ref{webs}, we determine the number of different types
of conics contained in webs of conics in $\PG(2,q)$. We summarize in Tables \ref{tablewebsodd} and \ref{tablewebseven} the $15$ webs of conics in $\PG(2,q)$ up to projective equivalence, along with their associated $K$-orbits of lines in $\PG(5,q)$ and their hyperplane-orbit distributions. Representatives of the $K$-orbits of lines and their  point-orbit distributions are listed in Tables \ref{tableoflinesodd} 
 and \ref{tableoflineseven}. We list in Table \ref{solidsodd}  the $15$ $K$-orbits of solids in $\PG(5,q)$, $q$ odd. In Tables \ref{OD2(H)odd} and \ref{OD2(H)even}, we collect line-orbit distributions of hyperplanes of $\PG(5,q)$. We refer to Section \ref{pre} for the definition of point, line and hyperplane-orbit distributions.

\section{Preliminaries}\label{pre}

In this section, we collected the definitions and theory needed in our proofs. Some of the results are classical and can be found in e.g. \cite{galois geometry,hirsch}, others are more recent results from \cite{T233paper,solidsqeven,planesqeven,roots, lines,nets,LaPoSh2021}.

\subsection{Linear systems of conics}

The finite field of order $q$ is denoted by $\bF_q$, where $q=p^h$, $p$ prime. 
The set of squares in $\bF_q$ is denoted by $\square_q$. The $n$-dimensional projective space over the finite field with $q$ element is denoted by $\PG(n,q)$. The projectivization of a vector space $V$ is denoted by $\PG(V)$. A {\it form on $\PG(n,q)$} is a homogeneous polynomial in $\bF_q[X_0,\ldots,X_n]$, and by $\cZ(g_1,\ldots,g_k)$ we denote the algebraic variety in $\PG(n,q)$ defined as the common zero locus in $\PG(n,q)$ of the forms $g_1,\ldots g_k$ on $\PG(n,q)$. Two algebraic varieties of $\PG(n,q)$ are {\it projectively equivalent} if there exists a projectivity (an element of the projective linear group $\PGL(n+1,q)$) mapping one to the other.
An algebraic variety $\cZ(g)$ defined by a single form $g$ on $\PG(n,q)$ is called a {\it hypersurface} in $\PG(n,q)$, and a {\it linear system} of hypersurfaces in $\PG(n,q)$ is the set of hypersurfaces 
$${\mathcal{LS}}(g_1,\ldots,g_k)=\left \{\cZ\left (\sum_{i=1}^k a_i g_i\right )~:~a_1,\ldots,a_k\in \bF_q\right \}$$ 
defined by forms $g_1,\ldots,g_k$ on $\PG(n,q)$ of the same degree.
The set of forms on $\PG(n,q)$ of degree $d$ form a vector subspace ${\mathcal{F}}_d(n,q)$ of $\bF_q[X_0,\ldots,X_n]$. 
A linear system ${\mathcal{LS}}(g_1,\ldots,g_k)$ is called {\it trivial} if the forms $g_1,\ldots,g_k\in {\mathcal{F}}_d(n,q)$ are all zero or if they span all of ${\mathcal{F}}_d(n,q)$.
We define the 
{\it dimension} of a linear system ${\mathcal{LS}}(g_1,\ldots,g_k)$ as the (projective) dimension of the subspace $\langle g_1,\ldots,g_k\rangle$ in $\PG({\mathcal{F}}_d(n,q))$. Linear systems of dimension  1, 2, and 3 are called {\it pencils}, {\it nets}, and {\it webs}. A linear system of hypersurfaces of co-dimension one, is called a {\it squab} of hypersurfaces.\\

A {\it conic} is defined as the zero locus $\cZ(f)$ of a quadratic form $f=a_{00}X^2+a_{01}XY+a_{02}XZ+a_{11}Y^2+a_{12}YZ+a_{22}Z^2$ on $\PG(2,q)$. The conic $\cZ(f)$ and the quadratic form $f$ have {\it discriminant} $$\Delta_{f}=4a_{00}a_{11}a_{22}+a_{01}a_{02}a_{12}-a_{00}a_{12}^2-a_{11}a_{02}^2-a_{22}a_{01}^2.$$
A conic is {\it non-singular} (or {\it non-degenerate}) if its discriminant is nonzero. There are four types of conics in $\PG(2,q)$: double lines, pairs of distinct lines, pairs of conjugate (imaginary) lines over $\bF_{q^2}$, and non-singular conics. Each two conics of the same type are projectively equivalent. Recall that a polynomial $f(X)=\alpha X^2 + \beta X + \gamma \in \mathbb{F}_{q}[X]$, $q$ even, with $\alpha \neq 0$ has exactly one root in $\mathbb{F}_{q}$ if and only if $\beta = 0$, two distinct roots in $\mathbb{F}_{q}$ if and only if $\beta \neq 0$ and $\operatorname{Tr}(\frac{\alpha \gamma}{\beta^2})=0$, and no roots in $\mathbb{F}_{q}$ otherwise, where $\operatorname{Tr}$ denotes the trace map from $\mathbb{F}_{q}$ to $\mathbb{F}_p$.
Since the space of quadratic forms in three variables has dimension six, a non-trivial linear system of conics, is either a conic, a pencil, a net, a web, or a squab of conics. 

\subsection{The Veronese surface in $\PG(5,q)$}

The {\it Veronese surface} $\cV(\bF_q)$ is a 2-dimensional algebraic variety in $\PG(5,q)$ which can be defined as the image of the {\it Veronese map}
\[
\nu: \PG(2,q)\rightarrow \PG(5,q) \quad \text{given by} \quad
(u_0,u_1,u_2) \mapsto (u_0^2,u_0u_1,u_0u_2,u_1^2,u_1u_2, u_2^2).
\]
The Veronese surface $\cV(\Fq)$ is also known as the {\it quadric Veronesean}.
We adopt the notation from \cite{nets,LaPoSh2021}, where a point $P=(y_0,y_1,y_2,y_3,y_4,y_5)$ of $\PG(5,q)$ is represented by a symmetric $3 \times 3$ matrix
\[
M_P=\begin{bmatrix} y_0&y_1&y_2\\
y_1&y_3&y_4\\
y_2&y_4&y_5  \end{bmatrix},
\]
and in which a dot represents a zero.
This representation is extended to subspaces of $\PG(5,q)$, see e.g. the tables in the Appendix.
The {\it rank of a point} $P\in\PG(5,q)$ is defined as  the rank of its associated matrix $M_P$. 
The points of rank $1$ are precisely those belonging to $\mathcal{V}(\Fq)$. Throughout the paper, the indeterminates of the coordinate rings of $\PG(2,q)$ and $\PG(5,q)$ are denoted by $(X_0,X_1,X_2)$ and $(Y_0,\ldots,Y_5)$ respectively. 
The Veronese surface $\cV(\Fq)$ contains $q^2 + q + 1$ conics, defined as the image of lines in $\PG(2,q)$ under the Veronese map $\nu$, where any two points $P$, $Q$ of $\mathcal{V}(\Fq)$ lie on a unique such conic given by 
\[
\mathcal{C}(P,Q):= \nu(\langle\nu^{-1}(P),\nu^{-1}(Q)\rangle),
\] 
and any two of these conics have a unique point in common. 
\begin{Remark}
Note that when $q=2$, each three points on $\cV(\bF_q)$ form a conic, so, in order to include the case $q=2$ in our arguments for general prime powers $q$, we need to somewhat alter the definition of a {\it conic of $\cV(\bF_q)$} in the case $q=2$, by defining such a conic as the image of a line of $\PG(2,q)$ under the Veronese map $\nu$. Other triplets of points on $
\cV(\bF_q)$ are not considered as conics in $\cV(\bF_q)$.
\end{Remark}
The conics of $\PG(2, q)$ correspond to the hyperplane sections of $\cV(\Fq)$. Planes of $\PG(5, q)$ which meet $\cV(\Fq)$ in a conic are called {\it conic planes}.
 If $q$ is even, then all tangent lines to a conic $\mathcal{C}$ in $\mathcal{V}(\Fq)$ are concurrent, meeting at the {\it nucleus} of $\cC$. 
The set of all nuclei of conics in $\mathcal{V}(\Fq)$ coincides with the set of points of a plane in $\PG(5,q)$ known as the {\it nucleus plane} $\pi_{\mathcal{N}}$ of $\cV(\bF_q)$. In the above matrix representation, points contained in the nucleus plane correspond to symmetric $3\times 3$ matrices with zeros on the main diagonal, i.e. $\pi_{\mathcal{N}}=\mathcal{Z}(Y_0, Y_3, Y_5)$.
Every rank-$2$ point $R$ of $\PG(5,q)$ defines a unique conic $\mathcal{C}(R)$ in $\cV(\bF_q)$. 
The {\it tangent lines to $\cV(\Fq)$} are defined as the tangent lines to the conics in $\cV(\Fq)$. The tangent lines of $\cV(\Fq)$ at a point $P\in \cV(\Fq)$ are contained in {\it the tangent plane of $\cV(\Fq)$ at $P$}.

\subsection{The group action}
The action on subspaces of $\PG(5,q)$ by the group $K\leqslant \PGL(6,q)$ is defined as the lift of $\PGL(3,q)$ through the Veronese map $\nu$. 
Explicitly, if $\phi_A\in \PGL(3,q)$ is the projectivity defined by the matrix $A \in \GL(3,q)$ then we define the corresponding projectivity $\alpha(\phi_A)\in \PGL(6,q)$ through its action on the points of $\PG(5,q)$ by 
\[
\alpha(\phi_A): P\mapsto Q \quad \text{where} \quad M_Q=AM_PA^T,
\] 
where $M_Q$ and $M_P$ are the matrix representations of $Q$ and $P$. Then $K:=\alpha(\PGL(3,q))$ is isomorphic to $\PGL(3,q)$ and leaves $\cV(\bF_q)$ invariant. 
Note that if $q > 2$ then $K \cong \PGL(3,q)$ is the {\it full} setwise stabiliser of $\cV(\bF_q)$ in $\PGL(6,q)$, while if $q=2$ then the full setwise stabiliser of $\cV(\bF_q)$ is $\text{Sym}_7$.
 The {\it rank distribution} of a subspace $W$ of $\PG(5,q)$ is the $3$-tuple $[r_1,r_2,r_3]$, where $r_i$ represents the count of points of rank $i$ in $W$. The rank distribution of a subspace is $K$-invariant. The next definition gives a stronger $K$-invariants, first introduced in \cite{LaPoSh2021,solidsqeven}. They play a pivotal  role in classifying $K$-orbits of subspaces of $\PG(5,q)$.
 
 \begin{Definition}\label{orbitdist}
\textnormal{
Let $W_1,W_2,\dots,W_m$ be the list of $K$-orbits of $r$-dimensional subspaces in $\PG(n,q)$. 
The \textit{$r$-space orbit distribution} of a subspace $W$ of $\PG(n,q)$ (with respect to this ordering of $K$-orbits) is the list $OD_r(W)=[w_1,w_2,\ldots,w_m]$, where $w_i=w_i(W)$ is the number of elements of the orbit $W_i$ incident with $W$.
}
\end{Definition}
There are four $K$-orbits of points in $\PG(5,q)$: the orbit $\mathcal{P}_1=\cV(\Fq)$ of rank-$1$ points, which has size $q^2+q+1$, the orbit of rank-$3$ points $\mathcal{P}_3$, which has size $q^5-q^2$, and two orbits of rank-$2$ points. For $q$ even, the orbits of rank-$2$ points comprise the $q^2+q+1$ points of the nucleus plane $\pi_{\mathcal{N}}$, denoted by $\mathcal{P}_{2,n}$, and the $(q^2-1)(q^2+q+1)$ points contained in conic planes but not in $\pi_{\mathcal{N}}\cup \cV(\bF_q) $, denoted by $\mathcal{P}_{2,s}$.
For $q$ odd, the orbits of rank-$2$ points comprise the $(q^2+q+1)q(q+1)/2$ exterior points, denoted by $\mathcal{P}_{2,e}$, and $(q^2+q+1)q(q-1)/2$ interior points, denoted by $\mathcal{P}_{2,i}$.
Therefore, the point-orbit distribution of a subspace $W$ of $\PG(5,q)$ is the $4$-tuple $OD_0(W)=[r_1,r_{2,n},r_{2,s},r_3]$ for $q$ even and $OD_0(W)=[r_1,r_{2,e},r_{2,i},r_3]$ for $q$ odd, where $r_i=r_i(W)$, $i\in \{1,3\}$, is the number of rank-$i$ points in $W$, $r_{2,n}=r_{2,n}(W)$ is the number of rank-$2$ points in $W\cap\pi_{\mathcal{N}}$, $r_{2,s}=r_{2,s}(W)$ is the number of rank-$2$ points in $W\setminus \pi_{\mathcal{N}} $, $r_{2,e}=r_{2,e}(W)$ is the number of exterior rank-$2$ points in $W$, and $r_{2,i}=r_{2,i}(W)$ is the number of interior rank-$2$ points in $W$. Similarly, we obtain the {\it line-}, {\it plane-}, {\it solid-}, and {\it hyperplane-orbit distributions} of $W$ for $r=1,2,3,4$ respectively. This data turns out to be very useful in determining the $K$-orbit of a subspace of $\PG(5,q)$. 
The line orbits themselves were determined (for all $q$) in \cite{lines}, based on the canonical forms of tensors of format $(2,3,3)$ from \cite{canonical}.

\begin{Theorem}\label{lines} 
There are 15 $K$-orbits of lines in $\PG(5,q)$ as described in Tables \ref{tableoflinesodd} and \ref{tableoflineseven}.
\end{Theorem}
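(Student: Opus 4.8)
The plan is to translate the classification of $K$-orbits of lines of $\PG(5,q)$ into the classification of pencils of conics in $\PG(2,q)$, hence into the classification of pencils (i.e.\ $2$-dimensional spaces) of symmetric $3\times 3$ matrices over $\bF_q$ up to a natural group action, and then to exploit the canonical forms of $(2,3,3)$-tensors. Concretely, a line $\ell$ of $\PG(5,q)$ is spanned by two points $P_0,P_1$ with symmetric matrix representations $M_0,M_1$, and the assignment $\ell\mapsto\langle M_0,M_1\rangle$ identifies lines of $\PG(5,q)$ with $2$-dimensional subspaces of the $6$-dimensional space of symmetric $3\times3$ matrices, equivalently with tensors $e_0\otimes M_0+e_1\otimes M_1$ in $\bF_q^2\otimes\bF_q^3\otimes\bF_q^3$ lying in the subvariety of partially symmetric tensors. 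Unwinding the $K$-action $M_P\mapsto A M_P A^T$, two lines lie in the same $K$-orbit precisely when the associated pencils of symmetric matrices are equivalent under $\GL(2,q)\times\GL(3,q)$, where $\GL(2,q)$ changes the basis of the pencil and $A\in\GL(3,q)$ acts by simultaneous congruence $M\mapsto AMA^T$. Thus the statement is equivalent to: there are exactly $15$ such equivalence classes, with representatives as in Tables~\ref{tableoflinesodd} and~\ref{tableoflineseven}. This also refines the classical (and in the even case incomplete) treatments of pencils of conics by Dickson and Campbell discussed in the introduction.

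First I would forget the symmetry and invoke \cite{canonical}: the $\GL(2,q)\times\GL(3,q)\times\GL(3,q)$-orbit of the tensor $T_\ell$ is a coarse $K$-invariant of $\ell$, and there are only finitely many such orbits, explicitly listed there. For each tensor orbit I would decide whether it contains a partially symmetric representative and, for those that do, analyse how it decomposes into $\GL(2,q)\times\GL(3,q)$-congruence orbits. The right book-keeping is the classical theory of pencils of quadratic/bilinear forms (Kronecker--Dieudonné normal forms), adapted to $\bF_q$: the congruence class of $\langle M_0,M_1\rangle$ is governed by the Kronecker minimal indices together with the elementary divisors of the binary form $\det(\lambda M_0+\mu M_1)$ over $\overline{\bF_q}$, and the remaining freedom is arithmetic --- controlled by the square classes $\square_q$ for $q$ odd, and by trace conditions (cf.\ the quadratic-root criterion recalled in Section~\ref{pre}) for $q$ even. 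Running through the possibilities --- the degenerate pencil with $\det(\lambda M_0+\mu M_1)\equiv 0$, the pencils separated by the factorisation type of this binary form and by the ranks of their members, and the pencils with repeated roots separated by elementary-divisor structure --- produces the two explicit lists, and a direct count in each characteristic yields $15$. The two tables occur because only the arithmetic step depends on the parity of $q$.

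Having produced $15$ candidate representatives in each case, I would verify that they are pairwise $K$-inequivalent by exhibiting, for each, $K$-invariants that already separate it from all the others: the rank distribution $[r_1,r_2,r_3]$, the point-orbit distribution $OD_0(\ell)$, and the intersection pattern of $\ell$ with $\cV(\bF_q)$ and with its secant variety (how many points of $\ell$ lie on $\cV$, how many tangent or secant lines it meets, whether it lies in a conic plane or meets $\pi_{\mathcal N}$, and so on). Once representative matrices are fixed these are routine finite computations, and they are recorded in Tables~\ref{tableoflinesodd} and~\ref{tableoflineseven}.

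The step I expect to be the main obstacle is this splitting analysis: a single $\GL(2,q)\times\GL(3,q)\times\GL(3,q)$-orbit of tensors typically breaks into several congruence orbits, and determining exactly how --- making the odd/even dichotomy precise, and checking that the generic canonical forms do not accidentally degenerate or coincide for the smallest values $q=2,3$ --- is the delicate part. The clean way to control it, and to be certain no orbit is missed or double-counted, is to carry out the reduction uniformly via the Kronecker--Dieudonné normal form rather than by ad hoc enumeration, and then to use the orbit distributions of Section~\ref{pre} as an independent check that the final list has exactly $15$ entries.
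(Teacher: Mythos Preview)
The paper does not supply its own proof of this theorem: it is stated as a known result, with the sentence immediately preceding it attributing the classification to \cite{lines}, which in turn is built on the canonical forms of $(2,3,3)$-tensors from \cite{canonical}. Your proposal is an accurate high-level description of exactly that approach --- pass from lines to pencils of symmetric $3\times3$ matrices, coarsen to $\GL(2,q)\times\GL(3,q)\times\GL(3,q)$-orbits of $2\times3\times3$ tensors, and then refine each tensor orbit into congruence classes under $\GL(2,q)\times\GL(3,q)$ --- so there is nothing to compare beyond noting that you have correctly reconstructed the method of the cited sources.
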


    For $q$ odd, $K$-orbits of solids in $\PG(5,q)$ correspond to $K$-orbits of lines in $\PG(5,q)$, whereas this correspondence fails when $q$ is even. The $K$-orbits of solids for $q$ even were determined in \cite{solidsqeven}. We collect  representatives of solids in $\PG(5,q)$, $q$ odd, in Table \ref{solidsodd}. 

Hyperplanes of $\PG(5,q)$ correspond to conics of $\PG(2,q)$ through the Veronese map $\nu$. We make this correspondence explicit via the following bijection $\delta$ between conics of $\PG(2,q)$ and hyperplanes of $\PG(5,q)$:
\[
\delta:  \cZ\Big(\sum_{0\leqslant i \leqslant j \leqslant 2}a_{ij}X_iX_j\Big)\mapsto\mathcal{Z}(a_{00}Y_0+a_{01}Y_1+a_{02}Y_2+a_{11}Y_3+a_{12}Y_4+a_{22}Y_5).
\]
In particular, we have four $K$-orbits of hyperplanes defined as follows: $\cH_{1}$, $\cH_{2,r}$ and $\cH_{2,i}$ denote the $K$-orbits of hyperplanes corresponding to the $\PGL(3,q)$-orbits of double lines, pairs of real lines and pairs of conjugate imaginary lines in $\PG(2,q)$, respectively, and $\cH_3$ denotes the $K$-orbit of hyperplanes corresponding to the $\PGL(3,q)$-orbit of non-singular conics in $\PG(2,q)$. Alternatively, $\mathcal{H}_1$ is the set of hyperplanes intersecting $\cV(\Fq)$ in a conic, $\mathcal{H}_{2r}$ is the set of hyperplanes intersecting $\cV(\Fq)$ in two conics, $\mathcal{H}_{2i}$ is the set of hyperplanes intersecting $\cV(\Fq)$ in a point, and $\mathcal{H}_3$ is the set of hyperplanes intersecting $\cV(\Fq)$ in a normal rational curve. The next lemma is a consequence of the correspondence between hyperplanes of $\PG(5,q)$ and conics of $\PG(2,q)$ outlined above.

\begin{Lemma}
    \ A hyperplane $H=\mathcal{Z}(f)\in\PG(5,q)$, where $f=a_{0}Y_0+a_{1}Y_1+a_{2}Y_2+a_{3}Y_3+a_{4}Y_4+a_{5}Y_5$ belongs to $\mathcal{H}_3$ if and only if its dual coordinates satisfy $$4a_{0}a_{3}a_{5}+a_{1}a_{2}a_{4}-a_{0}a_{4}^2-a_{3}a_{2}^2-a_{5}a_{1}^2\neq 0.$$

\end{Lemma}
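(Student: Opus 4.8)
The plan is to invoke the bijection $\delta$ directly and reduce the statement to the definition of a non-singular conic together with the explicit discriminant formula recorded in the preliminaries. First I would write down $\delta^{-1}(H)$ explicitly: if $H=\mathcal{Z}(f)$ with $f=a_{0}Y_0+a_{1}Y_1+a_{2}Y_2+a_{3}Y_3+a_{4}Y_4+a_{5}Y_5$, then by the very definition of $\delta$ the hyperplane $H$ corresponds to the conic $\mathcal{Z}(g)$ of $\PG(2,q)$ with $g=a_{0}X_0^2+a_{1}X_0X_1+a_{2}X_0X_2+a_{3}X_1^2+a_{4}X_1X_2+a_{5}X_2^2$; in the notation of the discriminant formula this means $a_{00}=a_0$, $a_{01}=a_1$, $a_{02}=a_2$, $a_{11}=a_3$, $a_{12}=a_4$, $a_{22}=a_5$. (The only place an arithmetic slip could creep in is transcribing this indexing correctly, in particular that the coefficient of $X_1^2$ matches $Y_3$ and not $Y_4$.)

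Next I would recall why membership in $\mathcal{H}_3$ is equivalent to non-singularity of $\delta^{-1}(H)$. Since $g(u_0,u_1,u_2)=f(\nu(u_0,u_1,u_2))$ for every $(u_0,u_1,u_2)$, we have $\nu(\mathcal{Z}(g))=\cV(\Fq)\cap H$, and since the action of $\PGL(3,q)$ on conics of $\PG(2,q)$ is carried by $\nu$ onto the action of $K$ on hyperplane sections of $\cV(\Fq)$, the map $\delta$ is equivariant with respect to the isomorphism $K\cong\PGL(3,q)$ (this is precisely the classical correspondence recalled in Section \ref{pre}). Consequently $\delta$ maps the $\PGL(3,q)$-orbit of non-singular conics of $\PG(2,q)$ onto the $K$-orbit $\mathcal{H}_3$, so $H\in\mathcal{H}_3$ if and only if $\delta^{-1}(H)$ is a non-singular conic. (Alternatively one could argue from the description of $\mathcal{H}_3$ as the set of hyperplanes meeting $\cV(\Fq)$ in a normal rational curve and check that $\cV(\Fq)\cap H$ is such a curve exactly when $\delta^{-1}(H)$ is non-degenerate, but the orbit description makes the implication immediate.)

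Finally I would substitute into the discriminant formula. By definition $\mathcal{Z}(g)$ is non-singular if and only if $\Delta_g\neq 0$, and plugging $a_{00}=a_0,\dots,a_{22}=a_5$ into $\Delta_g=4a_{00}a_{11}a_{22}+a_{01}a_{02}a_{12}-a_{00}a_{12}^2-a_{11}a_{02}^2-a_{22}a_{01}^2$ gives exactly $4a_{0}a_{3}a_{5}+a_{1}a_{2}a_{4}-a_{0}a_{4}^2-a_{3}a_{2}^2-a_{5}a_{1}^2$. Combining this with the previous paragraph yields the claimed equivalence, and it holds uniformly in $q$, since for $q$ even the preliminaries define non-singularity of a conic through the same discriminant expression.

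There is essentially no serious obstacle: the argument is a transcription of definitions through the bijection $\delta$. The single point that merits an explicit sentence is the $K$-equivariance of $\delta$ (equivalently, that $\nu$ intertwines the two group actions so that a $K$-orbit of hyperplanes pulls back to a $\PGL(3,q)$-orbit of conics), but this is the standard Veronese correspondence and can be cited rather than reproved.
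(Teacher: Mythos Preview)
Your proposal is correct and follows essentially the same approach as the paper, which simply states that the lemma is a consequence of the correspondence $\delta$ between hyperplanes of $\PG(5,q)$ and conics of $\PG(2,q)$. Your write-up just makes explicit the substitution of the dual coordinates into the discriminant formula and the equivariance of $\delta$, which is exactly what the paper leaves implicit.
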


Similarly as for the point-orbit distribution, sometimes the shorthand notation $OD_4(W)=[h_1, h_{2,r}, h_{2,i}, h_3]$ is used to represent the hyperplane-orbit distribution of the given subspace $W$ of $\PG(5,q)$, where $h_1=h_1(W)$ denotes the number of hyperplanes in $\cH_1$ incident with $W$ etc.

\section{Squabs of conics}\label{squabs}
In this section, for each squab ${\mathcal{S}}$ of conics in $\PG(2,q)$, we determine the number of different types of conics contained in $\mathcal S$.
Since a squab of conics is a linear system of co-dimension one, and the vector space ${\mathcal{F}}_2(2,q)$ has dimension 6, a squab $\mathcal S$ corresponds to a point $P_{\mathcal{S}}$ in the dual $\PG(5,q)$ of $\PG({\mathcal{F}}_2(2,q))$, which we identify with the ambient space of the Veronese variety $\cV(\bF_q)$ as described above. 
The {\it rank of a squab $\mathcal S$} is defined as the rank of $P_{\mathcal{S}}$ in $\PG(5,q)$ as defined above.
This gives a one-to-one correspondence between the number of different types of conics contained in the squab $\mathcal S$ and the hyperplane-orbit distribution $OD_4(P_{\mathcal{S}})$.
The main results of this section are Theorem \ref{thm:hyporbdist_pts_odd} for $q$ odd, and Theorem \ref{thm:hyporbdist_pts_even} for $q$ even, in which we determine $OD_4(P)$ for each point $P$ in $\PG(5,q)$. While the hyperplane-orbit distributions of the point-orbits depend on the parity of $q$, a number of interesting corollaries can be drawn from these results, which hold for both odd and even values of $q$. Recall that a squab of conics in $\PG(2,q)$ consists of $q^4+q^3+q^2+q+1$ conics.

\begin{Corollary}\label{cor:squabs}
(i) Every squab of conics in $\PG(2,q)$ contains exactly $q^4-q^2$
non-singular conics, unless the squab has rank three, in which case it contains $q^4$ non-singular conics.\\
(ii) If two squabs of conics in $\PG(2,q)$ have the same number of double lines and the same number of non-singular conics, then they are projectively equivalent.
\\
(iii) The number of double lines is a complete invariant for squabs of conics in $\PG(2,q)$ of rank two.\\
(iv) The number of non-singular conics is a complete invariant for squabs of conics in $\PG(2,q)$ not of rank two.\\
(v) Every squab of conics in $\PG(2,q)$ contains exactly $q+1$ double lines, unless the squab has rank two.
\end{Corollary}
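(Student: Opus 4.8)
The plan is to derive Corollary~\ref{cor:squabs} entirely from the hyperplane-orbit distributions $OD_4(P)$ computed in Theorems~\ref{thm:hyporbdist_pts_odd} and~\ref{thm:hyporbdist_pts_even}, using the dictionary $\delta$ between conics of $\PG(2,q)$ and hyperplanes of $\PG(5,q)$, together with the fact that the type of a conic in the squab $\mathcal S$ equals the $K$-orbit ($\cH_1$, $\cH_{2,r}$, $\cH_{2,i}$, or $\cH_3$) of the corresponding hyperplane through $P_{\mathcal S}$. So the first step is to tabulate, for each of the four point-orbits $\mathcal P_1,\mathcal P_{2,e}/\mathcal P_{2,n},\mathcal P_{2,i}/\mathcal P_{2,s},\mathcal P_3$, the four entries $[h_1,h_{2,r},h_{2,i},h_3]$ of $OD_4$, reading these off from the two theorems. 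The rank-$1$ orbit $\mathcal P_1$ gives the rank-one squabs (a point on $\cV(\bF_q)$), the two rank-$2$ orbits give the rank-two squabs, and $\mathcal P_3$ gives the rank-three squabs; note $h_1+h_{2,r}+h_{2,i}+h_3 = q^4+q^3+q^2+q+1$ in every row, which serves as an arithmetic sanity check.

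Once the four rows are in hand, every clause is a direct comparison. For (i): read off $h_3$ and observe it equals $q^4-q^2$ for the orbits $\mathcal P_1$ and both rank-$2$ orbits, and equals $q^4$ for $\mathcal P_3$; so a squab has $q^4$ non-singular conics precisely when $P_{\mathcal S}$ has rank three, and $q^4-q^2$ otherwise. For (v): read off $h_1$ (the number of double lines, since $\cH_1$ corresponds to the $\PGL(3,q)$-orbit of double lines) and check it equals $q+1$ for $\mathcal P_1$ and $\mathcal P_3$ but takes a different value on the two rank-$2$ orbits; this is the content of (v), that the only exception to ``exactly $q+1$ double lines'' is rank two. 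For (iii): among the rank-two squabs there are exactly two orbits (either $\mathcal P_{2,e},\mathcal P_{2,i}$ or $\mathcal P_{2,n},\mathcal P_{2,s}$ depending on parity), and from the table their $h_1$-values are distinct; hence $h_1$ separates them, so the number of double lines is a complete invariant in rank two. For (iv): the squabs not of rank two are those with $P_{\mathcal S}$ of rank $1$ or rank $3$; by (i) these have $h_3$ equal to $q^4-q^2$ and $q^4$ respectively, which are distinct, so $h_3$ separates the two classes --- and since there is a single $K$-orbit of rank-$1$ points and a single $K$-orbit of rank-$3$ points, $h_3$ is a complete invariant on this set. Finally (ii) is the conjunction: given two squabs with equal $h_1$ and equal $h_3$, if both have rank two they are equivalent by (iii); if neither has rank two they are equivalent by (iv); and the mixed case cannot occur, because a rank-two squab has $h_3 = q^4-q^2$ while by (i) the only non-rank-two squabs with that same $h_3$ are the rank-one ones, and one then checks from the table that rank-one and rank-two squabs have different $h_1$, so equality of both invariants forces the two squabs into the same rank class.

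The only real work, then, is assembling the $OD_4$ values correctly from Theorems~\ref{thm:hyporbdist_pts_odd} and~\ref{thm:hyporbdist_pts_even} and verifying the handful of numerical (in)equalities above; the proof of the corollary itself is a bookkeeping argument with no geometric obstacle. The one subtlety to be careful about is the mixed case in (ii): one must confirm that no rank-two squab shares \emph{both} its double-line count and its non-singular-conic count with a rank-one or rank-three squab, which requires checking that the $h_1$-entry of the rank-one row differs from the $h_1$-entries of the rank-two rows --- a single inequality in $q$ that follows at once from the tables. I would also remark that the statement is uniform in the parity of $q$ even though the underlying distributions are not, precisely because the values of $h_1$ and $h_3$ that appear --- $q+1$, $q^4-q^2$, $q^4$ --- coincide across the two parities for the rank-one and rank-three orbits, and the two rank-two orbits, whatever they are called, always yield $h_3 = q^4-q^2$ and an $h_1$ different from $q+1$.
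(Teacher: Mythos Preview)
Your overall approach is exactly what the paper intends: the corollary is stated without a separate proof and is meant to be read off from Tables~\ref{table_hypOD_points} and~\ref{table_hypOD_points_even}. Parts (i), (iii), (iv), and (v) go through as you describe.

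However, your argument for (ii) has a genuine gap in the mixed-rank case. You assert that ``rank-one and rank-two squabs have different $h_1$, \dots\ a single inequality in $q$ that follows at once from the tables,'' and later that ``the two rank-two orbits \dots\ always yield \dots\ an $h_1$ different from $q+1$.'' This is false for $q$ even: Table~\ref{table_hypOD_points_even} gives $h_1(\mathcal P_{2,s})=q+1=h_1(\mathcal P_1)$, and both orbits have $h_3=q^4-q^2$. Thus a squab with $P_{\mathcal S}\in\mathcal P_1$ and one with $P_{\mathcal S}\in\mathcal P_{2,s}$ share the same number of double lines and the same number of non-singular conics while lying in distinct $K$-orbits. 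Your check does not merely need more care; the inequality you rely on is simply not true, and in fact clause~(ii) of the corollary, as stated, fails for even $q$. (This does not affect (v), which only asserts the forward implication that non-rank-two squabs have $h_1=q+1$; you should not strengthen it to ``only rank-two squabs can deviate from $q+1$,'' which is what you wrote.) For odd $q$ your case analysis for (ii) is fine, since there $h_1(\mathcal P_{2,e})=2q+1$ and $h_1(\mathcal P_{2,i})=1$, neither equal to $q+1$.
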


The remainder of this section is dedicated to the proof of the main results concerning squabs of conics in $\PG(2,q)$.

\begin{Theorem}\label{thm:hyporbdist_pts_odd}
The hyperplane-orbit distributions of the point-orbits in $\PG(5,q)$, $q$ odd, are as in Table \ref{table_hypOD_points}.
\begin{table}[!htbp]
\begin{center}
\small
\begin{tabular}[h]{l | l} 

$P^K$ & $OD_4(P)$ \\ \hline
\\
$\mathcal{P}_1$&$[q+1,q^3+\frac{3q^2+q}{2},\frac{q^2-q}{2},q^4-q^2]$\\
\\
$\mathcal{P}_{2,e}$& $[2q+1, \frac{q^3+3q^2}{2},\frac{q^3+q^2}{2}-q, q^4-q^2]$\\
\\
$\mathcal{P}_{2,i}$ &$[1,\frac{q^3+3q^2}{2}+q,\frac{q^3+q^2}{2},q^4-q^2]$\\
\\
$\mathcal{P}_3$ &$[q+1, \frac{q^3+2q^2+q}{2},\frac{q^3-q}{2}, q^4]$\\
\hline
 \end{tabular}
 \caption{\label{table_hypOD_points}Hyperplane-orbit distributions of the $K$-orbits of points in $\PG(5,q)$; $q$ odd.}
\end{center}
\end{table}
\end{Theorem}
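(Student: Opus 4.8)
The plan is to compute $OD_4(P)$ for a representative $P$ of each of the four point-orbits by a direct counting argument that exploits the correspondence $\delta$ between hyperplanes of $\PG(5,q)$ and conics of $\PG(2,q)$. Fix a point $P$ with matrix $M_P$. A hyperplane $H=\mathcal Z(a_0Y_0+\cdots+a_5Y_5)$ passes through $P$ precisely when the ``inner product'' $\langle M_P, M_H\rangle=0$, where $M_H$ is the symmetric matrix $\begin{bmatrix}a_0&a_1/2&a_2/2\\a_1/2&a_3&a_4/2\\a_2/2&a_4&a_5\end{bmatrix}$ (up to the usual rescaling for $q$ odd); equivalently, the conic $\delta^{-1}(H)$ contains the point or line-pair encoded by $P$ under the usual polarity identification of $\mathcal F_2(2,q)$ with its dual. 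So for each representative $P$ I would first identify the hyperplanes through $P$ with a linear section of the dual $\PG(5,q)$, namely a hyperplane $H_P^\perp$ of the dual space, and then the four numbers $h_1,h_{2,r},h_{2,i},h_3$ are exactly the number of double lines, real line-pairs, imaginary line-pairs, and non-singular conics lying in that hyperplane of the dual $\PG(5,q)$. Since the dual space is again (the ambient space of) a Veronesean with its own $K$-action, and the $K$-orbit of $P$ determines the $K$-orbit of $H_P^\perp$, this reduces each row of the table to counting the conic-types in a single hyperplane section — i.e. to evaluating the rank distribution / conic-type distribution of one hyperplane of $\PG(5,q)$, for the orbit of that hyperplane corresponding to $P^K$.

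Concretely I would proceed orbit by orbit with explicit representatives. For $\mathcal P_1$ take $P=\nu(1,0,0)$, i.e. $M_P=\mathrm{diag}(1,0,0)$; hyperplanes through $P$ are $\mathcal Z(f)$ with $a_0=0$, so the conics $\delta^{-1}(H)$ are exactly those of the form $a_{01}XY+a_{02}XZ+a_{11}Y^2+a_{12}YZ+a_{22}Z^2$, and I count how many of these $q^4$-ish families of conics are double lines, (real/imaginary) line pairs, non-singular, using the discriminant formula $\Delta_f=4a_{00}a_{11}a_{22}+a_{01}a_{02}a_{12}-a_{00}a_{12}^2-a_{11}a_{02}^2-a_{22}a_{01}^2$ specialised to $a_{00}=0$. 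That is a pencil/net-type counting problem over $\bF_q$ that splits according to whether $a_{01}=a_{02}=0$ or not, and whether the residual binary form is a square, etc. For $\mathcal P_{2,e}$ and $\mathcal P_{2,i}$ take $M_P=\mathrm{diag}(1,-\epsilon,0)$ with $\epsilon$ a nonzero square resp. nonsquare (or $\mathrm{diag}(1,1,0)$ vs.\ $\mathrm{diag}(1,-t,0)$, $t\notin\square_q$): hyperplanes through $P$ are $a_0-\epsilon a_3=0$, leading again to an explicit $4$-parameter family of quadratic forms whose discriminants I classify. For $\mathcal P_3$ take $M_P=I$: hyperplanes through $P$ are $a_0+a_3+a_5=0$, i.e. the trace-zero symmetric matrices; here the relevant count is of nonsingular conics among forms with $a_{00}+a_{11}+a_{22}=0$, which is a clean quadratic-form count. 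Throughout I would lean on the known fact that each of the four conic-types forms a single $\PGL(3,q)$-orbit, on standard counts ($q^2+q+1$ double lines, $(q^2+q+1)q(q+1)/2$ real line-pairs, etc.), and on incidence identities such as: the number of conics through a fixed point, through a fixed line-pair, etc., obtained by orbit–counting (Burnside / double counting on flags).

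The consistency checks are cheap and worth using as scaffolding: in each row the four entries must sum to $|\mathcal H_1|+|\mathcal H_{2,r}|+|\mathcal H_{2,i}|+|\mathcal H_3|$ restricted to a hyperplane of $\PG(5,q)$, i.e. to $q^4+q^3+q^2+q+1$ (the number of hyperplanes through a point), and the totals $\sum_P |P^K|\cdot h_j(P)$ over all points $P$ must equal $|\mathcal H_j|\cdot\#\{\text{points on a hyperplane}\} = |\mathcal H_j|(q^4+q^3+q^2+q+1)$ for each $j$; matching these pins down all ambiguous constants. One can also cross-check the $h_3$ column against the already-announced Corollary~\ref{cor:squabs}(i) ($q^4-q^2$ for ranks $1,2$ and $q^4$ for rank $3$), and the $h_1$ column against Corollary~\ref{cor:squabs}(v) ($q+1$ double lines except for rank-two squabs, where we get $2q+1$ and $1$).

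\textbf{Main obstacle.} The genuinely delicate part is the even split of the singular non-double-line conics through $P$ into the ``real pair'' orbit $\mathcal H_{2,r}$ and the ``imaginary pair'' orbit $\mathcal H_{2,i}$: the counts for rank-$2$ points in particular are sensitive to whether the diagonal entry defining the representative ($\epsilon$ above) is a square or a nonsquare, and to quadratic-residue conditions that appear when one asks whether a residual binary quadratic form factors over $\bF_q$ or only over $\bF_{q^2}$. Getting the $\frac{q^2-q}{2}$ versus $\frac{q^2+q}{2}$-type terms right — and hence distinguishing the $\mathcal P_{2,e}$ and $\mathcal P_{2,i}$ rows — requires a careful case analysis of these quadratic-character sums (or, equivalently, of how the polar line of $P$ meets the conic types), whereas the $h_1$ and $h_3$ columns are comparatively routine. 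I would organise that case analysis around the geometry of $P$ relative to $\cV(\Fq)$ and its secant variety (exterior vs.\ interior point = the polar quadric section being hyperbolic vs.\ elliptic), which makes the square/nonsquare dichotomy transparent and keeps the character-sum bookkeeping manageable.
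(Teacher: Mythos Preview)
Your proposal is correct but takes a genuinely different route from the paper. The paper's proof is synthetic and flag-counting based: for $\mathcal P_1$ it counts directly via the pencil of lines through $\nu^{-1}(P)$ (and pairs of conjugate lines in the Baer extension for $h_{2,i}$); for $\mathcal P_{2,e}$ and $\mathcal P_{2,i}$ it does \emph{not} fix a representative and compute discriminants, but instead double-counts flags $(P,H)$ with $P$ in the given rank-$2$ orbit and $H\in\cH_{2,r}$ or $\cH_{2,i}$, by analysing how a hyperplane $H$ meets each conic plane (tangent line vs.\ secant line, contributing $q$, $\tfrac{q\pm1}{2}$, or $0$ points of the relevant orbit); for $\mathcal P_3$ it then bootstraps from the rank-$2$ results by another flag count. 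Your approach instead fixes an explicit $M_P$, writes the incidence condition as one linear equation on $(a_0,\dots,a_5)$, and classifies the resulting four-parameter family of conics by rank and discriminant. Both work. The paper's method makes the equality $h_3=q^4-q^2$ for all rank-$\leq 2$ points drop out uniformly from orbit sizes, and avoids any character sums; your method is more mechanical but, as you correctly flag, pushes the work into the $\cH_{2,r}$/$\cH_{2,i}$ split via quadratic-residue case analysis. One remark: the duality idea in your first paragraph is sound for $q$ odd (the trace-form polarity on symmetric matrices sends $\mathcal P_1,\mathcal P_{2,e},\mathcal P_{2,i},\mathcal P_3$ to $\cH_1,\cH_{2,r},\cH_{2,i},\cH_3$ respectively, so $OD_4(P)=OD_0(P^\perp)$), and the paper does invoke exactly this polarity later for lines/solids, but it does not use it here; in any case your concrete coordinate computation in the second paragraph is what actually carries the argument.
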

\begin{proof}
Recall the sizes of the point orbits and the hyperplane orbits: $|\cP_1|=|\cH_1|=q^2+q+1$, $|\cP_{2,r}|=|\cH_{2,r}|=q(q+1)(q^2+q+1)/2$, $|\cP_{2,i}|=|\cH_{2,i}|=q(q-1)(q^2+q+1)/2$, and $|\cP_3|=|\cH_3|=q^5-q^2$.

\bigskip

(i) Let $P\in \cP_1$. There are $q+1$ hyperplanes in $\cH_1$ through $P$, corresponding to lines through $\nu^{-1}(P)$. There are $q^3+\frac{3q^2+q}{2}$ hyperplanes in $\cH_{2,r}$ through $P$, ${q+1 \choose 2}$ of them corresponding to pairs of distinct lines belonging to the pencil of lines through $\nu^{-1}(P)$, and $(q+1)q^2$ hyperplanes corresponding to pairs of distinct lines, exactly one of which containing $\nu^{-1}(P)$. To count the number of hyperplanes in $\cH_{2,i}$, consider the embedding of $\PG(2,q)$ in $\PG(2,q^2)$ as a Baer subplane. There are $q^2-q$ lines in $\PG(2,q^2)$ through $\nu^{-1}(P)$, which do not belong to $\PG(2,q)$, and each of them is paired up with its conjugate. It follows that there are $\frac{q^2-q}{2}$ hyperplanes in $\cH_{2,i}$ through $P$. The remaining $q^4-q^2$ hyperplanes through $P$ belong to $\cH_3$. Note that $h_{2,r}+h_{2,i}=q^3+2q^2$.

\bigskip

(ii) Let $P\in \cP_{2,e}$. Let $R_1$ and $R_2$ denote the points of tangency of the tangent lines of $\cC(P)$ through $P$. 

Each line $L$ through $\nu^{-1}(R_i)$ corresponds to a hyperplane $H_L \in \cH_1$ which contains the tangent plane of $
\cV(\bF_q)$ through $R_i$, and therefore contains $P$. There are $2q+1$ such hyperplanes $H_L$, and each hyperplane of $\cH_1$ through $P$ is of this type, i.e. $h_1=2q+1$. 

To determine $h_{2,r}$, first observe that any hyperplane $H\in\cH_{2,r}$ containing $\cC(P)$ also contains $P$, and that there are $q^2+q$ such hyperplanes. Next we count hyperplanes in $\cH_{2,r}$ through $P$ which are not containing $\cC(P)$. Let $\alpha$ denote the number of such hyperplanes through $P$, and note that $\alpha$ is independent of the choice of $P\in \cP_{2,e}$.
 Suppose $H$ is such a hyperplane, containing the two conic planes $\pi_i$, $i=1,2$, with $R=\pi_1\cap \pi_2$. Let $\pi$ be any conic plane different from $\pi_1$ and $\pi_2$, and let $L_\pi=\pi\cap H$. If $R\in L_\pi$ then $L_\pi$ is tangent to $\cV(\bF_q)$ in $\pi$. In this case $L_\pi$ contains $q$ points of $\cP_{2,e}$. If $R\notin L_\pi$ then $L$ is a 2-secant to $\cV(\bF_q)$ in $\pi$, and therefore contains $\frac{q-1}{2}$ points of $\cP_{2,e}$. Counting flags $(P,H)$ with $P\in \cP_{2,e}$, $H\in \cH_{2,r}$, and $\cC(P)$ not contained in $H$, gives
$$
|\cH_{2,r}| \left ( (q-1)q+q^2\left ( \frac{q-1}{2}\right ) \right )=|\cP_{2,e}| ~ \alpha,
$$
since two conic planes meet in a point of $\cV(\bF_q)$, and there are $q-1$ conic planes $\pi$, different from $\pi_1$ and $\pi_2$, for which $R\in L_\pi$.
This implies that 
$$\alpha=\frac{q^3+q^2}{2}-q,
\mbox{ and therefore, } h_{2,r}=\alpha+q^2+q=\frac{q^3+3q^2}{2}.$$

Next we determine $h_{2,i}$. Since a hyperplane $H\in \cH_{2,i}$ meets each conic plane in a line $L$, the number of points of $\cP_{2,e}$ in $H$ is equal to 
$$
\beta=(q+1)q+q^2\left (\frac{q+1}{2}\right )=(q+1)\left ( \frac{q^2+2q}{2}\right )
$$
where the first term corresponds to the points contained in the $q+1$ conic planes through $R=H\cap \cV(\bF_q)$, and the second term corresponds to the points contained in the $q^2$ remaining conic planes.
Counting flags $(P,H)$ with $P\in \cP_{2,e}$ and $H\in \cH_{2,i}$, gives
$|\cH_{2,i}| \beta=|\cP_{2,e}| ~ h_{2,i}$,
which implies 
$$
h_{2,i}=\frac{|\cH_{2,i}|}{|\cP_{2,e}|}(q+1)\left ( \frac{q^2+2q}{2}\right )=(q-1)\left ( \frac{q^2+2q}{2}\right )
= \frac{q^3+q^2}{2}-q.
$$

\bigskip

(iii) Let $P\in \cP_{2,i}$. We claim that the hyperplane-orbit distribution of $P$ is 
$$[h_1, h_{2,r}, h_{2,i}, h_3]=[1,\frac{q^3+3q^2}{2}+q,\frac{q^3+q^2}{2},q^4-q^2].
$$
Since $P$ is contained in the hyperplane of $\cH_1$ defined by $\cC(P)$ and two hyperplanes of $\cH_1$ meet in a solid which does not contain any points of $\cP_{2,i}$, it is clear that $h_1=1$.

To determine $h_{2,r}$, first observe that there are $q^2+q$ hyperplanes of $\cH_{2,r}$ containing $\cC(P)$. Let $\alpha$ denote the number of hyperplanes through $P$, which are not containing $\cC(P)$, and suppose $H$ is such a hyperplane, containing the two conic planes $\pi_i$, $i=1,2$, with $R=\pi_1\cap \pi_2$. Let $\pi$ be any conic plane different from $\pi_1$ and $\pi_2$, and let $L_\pi=\pi\cap H$. If $R\in L_\pi$ then $L_\pi$ is tangent to $\cV(\bF_q)$ in $\pi$. In this case $L_\pi$ contains no points of $\cP_{2,i}$. If $R\notin L_\pi$ then $L$ contains $\frac{q-1}{2}$ points of $\cP_{2,i}$. Counting flags $(P,H)$ with $P\in \cP_{2,i}$, $H\in \cH_{2,r}$, and $\cC(P)$ not contained in $H$, gives
$$
|\cH_{2,r}| \left (q^2\left ( \frac{q-1}{2}\right ) \right )=|\cP_{2,i}| ~ \alpha,
$$
since two conic planes meet in a point of $\cV(\bF_q)$, and there are $q-1$ conic planes $\pi$, different from $\pi_1$ and $\pi_2$, for which $R\in L_\pi$.
This implies that 
$$\alpha=\frac{q^3+q^2}{2},
\mbox{ and therefore, } h_{2,r}=\alpha+q^2+q=\frac{q^3+3q^2}{2}+q.$$

To prove that $h_{2,i}=\frac{q^3+q^2}{2}$, first note that the number of points of $\cP_{2,i}$ in a hyperplane $H$ of $\cH_{2,i}$ is equal to $\beta=\frac{q^3+q^2}{2}$. To see this, consider the intersection of $H$ with the $q^2+q+1$ conic planes of $\cV(\bF_q)$: $q+1$ of these conic planes are passing through the unique point of $H\cap \cV(\bF_q)$ and these conic planes do not contribute to $\beta$; while each of the other $q^2$ conic planes contributes $(q+1)/2$ points to $\beta$. The fact that $h_{2,i}=\beta$ now easily follows by counting flags $(P,H)$ where $P\in \cP_{2,i}$ and $H\in \cH_{2,i}$, since $|\cP_{2,i}|=|\cH_{2,i}|$.

Once again we obtain $h_3=q^4+q^3+q^2+q+1-h_1-h_{2,r}-h_{2,i}=q^4-q^2$.

\bigskip

(iv) Let $P\in \cP_{3}$. The number of hyperplanes of $\cH_1$ through $P$ is equal to the number of points of $\cV(\bF_q)$ in a hyperplane $H\in \cH_3$ which is $q+1$, since $H\cap \cV(\bF_q)$ is a normal rational curve. The values of $h_{2,r}$ and $h_{2,i}$ can easily be determined by counting flags $(P,H)$ with $H\in \cH_{2,r}$ and $H\in \cH_{2,i}$, respectively, where we use the fact (which we just proved) that each point of rank two is contained in exactly $q^4-q^2$ hyperplanes of $\cH_3$.
This gives
$$
|\cH_{2,r}| (q^4-q^2)=|\cP_{3}| h_{2,r}, \mbox{ and, } |\cH_{2,i}| (q^4-q^2)=|\cP_{3}| h_{2,i},
$$
which implies $h_{2,r}=\frac{q^3+2q^2+q}{2},$ and $h_{2,i}=\frac{q^3-q}{2}$. In this case one obtains $h_3=q^4$.
\end{proof}

\begin{Theorem}\label{thm:hyporbdist_pts_even}
The hyperplane-orbit distributions of the point-orbits in $\PG(5,q)$, $q$ even, are as in Table \ref{table_hypOD_points_even}.
\begin{table}[!htbp]
\begin{center}
\small
\begin{tabular}[h]{l | l} 
$P^K$ & $OD_4(P)$ \\ \hline
\\
$\mathcal{P}_1$&$[q+1,q^3+\frac{3q^2+q}{2},\frac{q^2-q}{2},q^4-q^2]$\\
\\
$\mathcal{P}_{2,n}$& $[q^2+q+1, \frac{q^3+2q^2+q}{2},\frac{q^3-q}{2}, q^4-q^2]$\\
\\
$\mathcal{P}_{2,s}$ &$[q+1,\frac{q^3+3q^2+q}{2},\frac{q^3+q^2-q}{2},q^4-q^2]$\\
\\
$\mathcal{P}_3$ &$[q+1, \frac{q^3+2q^2+q}{2},\frac{q^3-q}{2}, q^4]$\\
\hline
 \end{tabular}
 \caption{\label{table_hypOD_points_even}Hyperplane-orbit distributions of the $K$-orbits of points in $\PG(5,q)$; $q$ even.}
\end{center}
\end{table}
\end{Theorem}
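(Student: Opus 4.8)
The plan is to fix, for each of the four $K$-orbits of points, a representative $P$ and determine $OD_4(P)=[h_1,h_{2,r},h_{2,i},h_3]$; since the four entries sum to $q^4+q^3+q^2+q+1$ (the number of hyperplanes of $\PG(5,q)$ on a single point), it suffices to compute three entries per row, and almost all of them will be extracted by double counting incident flags $(P,H)$ inside a single $K$-orbit of hyperplanes. The rows for $\cP_1$ and $\cP_3$ coincide with the odd case, and I expect their proofs to be word for word those of parts (i) and (iv) of the proof of Theorem~\ref{thm:hyporbdist_pts_odd}: part (i) only manipulates lines of $\PG(2,q)$, pairs of lines, and the Baer subplane $\PG(2,q)\subset\PG(2,q^2)$, hence is insensitive to the parity of $q$; and part (iv) only uses that a rank-two point lies in exactly $q^4-q^2$ hyperplanes of $\cH_3$, which we shall have established for both rank-two orbits $\cP_{2,n}$, $\cP_{2,s}$ before we reach $\cP_3$. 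So the real content is the two middle rows, and the engine for them is the following even-characteristic fact.

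\emph{Fact:} $\cH_1$ is exactly the set of hyperplanes containing the nucleus plane $\pi_{\mathcal{N}}$. Indeed, in characteristic two $\ell^2=b_0^2X_0^2+b_1^2X_1^2+b_2^2X_2^2$ for $\ell=b_0X_0+b_1X_1+b_2X_2$, so $\delta(\cZ(\ell^2))=\cZ(b_0^2Y_0+b_1^2Y_3+b_2^2Y_5)\supseteq\cZ(Y_0,Y_3,Y_5)=\pi_{\mathcal{N}}$; conversely any hyperplane on $\pi_{\mathcal{N}}$ has the form $\cZ(a_0Y_0+a_3Y_3+a_5Y_5)$, whose $\delta$-preimage $\cZ\big((\sqrt{a_0}X_0+\sqrt{a_3}X_1+\sqrt{a_5}X_2)^2\big)$ is a double line; and both families have cardinality $q^2+q+1$. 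This has two consequences: (1) every hyperplane outside $\cH_1$ meets $\pi_{\mathcal{N}}$ in a line, hence in exactly $q+1$ points of $\cP_{2,n}=\pi_{\mathcal{N}}$; and (2) for any point $P\notin\pi_{\mathcal{N}}$, the hyperplanes of $\cH_1$ through $P$ are precisely the hyperplanes through the solid $\langle\pi_{\mathcal{N}},P\rangle$, of which there are $q+1$. Since $\cP_1,\cP_{2,s},\cP_3$ consist of points off $\pi_{\mathcal{N}}$ whereas $\cP_{2,n}=\pi_{\mathcal{N}}$, consequence (2) already yields the entire first column of the table. I will also use the oval structure of a conic plane $\pi'$ for $q$ even: $\pi'\mapsto N_{\pi'}$ is a bijection from conic planes onto the points of $\pi_{\mathcal{N}}$, every line on $N_{\pi'}$ meets $\cC_{\pi'}:=\pi'\cap\cV$ in a single point, and every other line of $\pi'$ meets $\cC_{\pi'}$ in $0$ or $2$ points.

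For $\cP_{2,n}$: by consequence (1), counting flags $(N,H)$ with $N\in\pi_{\mathcal{N}}$ and $H$ ranging over $\cH_{2,r}$, over $\cH_{2,i}$, over $\cH_3$, gives $h_{2,r}=|\cH_{2,r}|(q+1)/(q^2+q+1)=\frac{q^3+2q^2+q}{2}$, $h_{2,i}=|\cH_{2,i}|(q+1)/(q^2+q+1)=\frac{q^3-q}{2}$, and then $h_3=q^4-q^2$ by subtraction. For $\cP_{2,s}$, fix $R$ in the conic plane $\pi$ with nucleus $N$ and conic $\cC$; $h_1=q+1$ is known, and $h_{2,r},h_{2,i}$ again follow from flag counts once $|H\cap\cP_{2,s}|$ is known for a hyperplane $H$ of each type. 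The idea is to slice $H$ against all $q^2+q+1$ conic planes (recall each point of $\cP_{2,s}$ lies in a unique one, since two conic planes meet in a point of $\cV$): a hyperplane $H\in\cH_{2,r}$ contains two conic planes (contributing $q^2-1$ points of $\cP_{2,s}$ apiece) and meets each of the other $q^2+q-1$ conic planes $\pi'$ in a line which, checking cases, is either a secant of $\cC_{\pi'}$ or a line on $N_{\pi'}$ meeting $\cC_{\pi'}$ once, so in both cases $q-1$ points of $\cP_{2,s}$, whence $|H\cap\cP_{2,s}|=2(q^2-1)+(q^2+q-1)(q-1)=q^3+2q^2-2q-1$; a hyperplane $H\in\cH_{2,i}$ meets $\cV$ in one point $\nu(v)$, and here the decisive point is that the $q+1$ conic planes on $\nu(v)$ are exactly the conic planes whose nucleus lies on the line $H\cap\pi_{\mathcal{N}}$, so $H$ meets those $q+1$ planes in a line on $N_{\pi'}$ and $\nu(v)$ ($q-1$ points of $\cP_{2,s}$ each) and meets each of the remaining $q^2$ conic planes in a line disjoint from $\cC_{\pi'}$ and missing $N_{\pi'}$ ($q+1$ points each), whence $|H\cap\cP_{2,s}|=(q+1)(q-1)+q^2(q+1)=q^3+2q^2-1$. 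Since $q^3+2q^2-2q-1=(q-1)(q^2+3q+1)$ and $q^3+2q^2-1=(q+1)(q^2+q-1)$, dividing $|\cH_{2,r}|(q^3+2q^2-2q-1)$ and $|\cH_{2,i}|(q^3+2q^2-1)$ by $|\cP_{2,s}|=(q^2-1)(q^2+q+1)$ produces $h_{2,r}=\frac{q^3+3q^2+q}{2}$, $h_{2,i}=\frac{q^3+q^2-q}{2}$, and $h_3=q^4-q^2$ by subtraction. With $h_3=q^4-q^2$ now available on both rank-two orbits, the $\cP_3$ row closes exactly as in part (iv) of the odd case.

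The step I expect to be the main obstacle is the $\cP_{2,s}$ row: concretely, the plane-by-plane intersection of a hyperplane of $\cH_{2,r}$ or $\cH_{2,i}$ with the pencil of $q^2+q+1$ conic planes, keeping straight which of the secant / through-the-nucleus / external alternatives occurs in each plane. The delicate ingredient is the identity, for $H\in\cH_{2,i}$, between the $q+1$ conic planes on $\nu(v)=H\cap\cV$ and the conic planes whose nucleus lies on $H\cap\pi_{\mathcal{N}}$: one inclusion is a short dimension count inside a conic plane, and the reverse inclusion follows because both sets have size $q+1$. It is exactly this coincidence that makes the various counts collapse to the polynomials listed in Table~\ref{table_hypOD_points_even}.
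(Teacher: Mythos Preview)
Your proposal is correct and follows essentially the same flag-counting strategy as the paper: compute $|H\cap\cP_{2,\bullet}|$ for a generic hyperplane in each of $\cH_{2,r}$ and $\cH_{2,i}$ by slicing $H$ against the $q^2+q+1$ conic planes, then divide. The one organisational difference worth noting is that you front-load the observation $\cH_1=\{H:\pi_{\mathcal N}\subset H\}$ and extract from it both the whole $h_1$ column and the $\cP_{2,n}$ row in one stroke (via consequence~(1)), whereas the paper proves $h_1(\cP_{2,s})=q+1$ by a separate tangent-line argument and handles $h_{2,r}(\cP_{2,n})$ by splitting off the hyperplanes that contain $\cC(P)$ before flag-counting the rest; your packaging is slightly cleaner here. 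For $h_{2,r}(\cP_{2,s})$ you count the full $|H\cap\cP_{2,s}|$ (including the $2(q^2-1)$ points coming from the two conic planes inside $H$), while the paper counts only the part not contained in $\cC(P)$ and adds back $q^2+q$; the two computations are equivalent, and both rest on the same case analysis (secant vs.\ tangent intersection in each conic plane) that you describe.
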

\begin{proof}
Parts (i) and (iv) in proof of the Theorem \ref{thm:hyporbdist_pts_odd} for the hyperplane-orbit distribution of the point-orbits $\cP_1$ and $\cP_3$ is independent of the parity of $q$. 

Recall that $|\cP_{2,n}|=(q^2+q+1)$, and $|\cP_{2,s}|=(q^2-1)(q^2+q+1)$.

(i) Let $P\in \cP_{2,n}$. Then $\cP$ is the nucleus of a unique conic $\cC(P)$ and each conic plane $\pi$ meets $\cC(P)$ in a point $R_\pi$. The unique hyperplane $H_\pi\in \cH_1$ containing $\pi$, contains the tangent plane of $\cV(\bF_q)$ at $R_\pi$ and therefore contains $P$. Therefore $h_1=q^2+q+1$. In fact this proves that the nucleus plane is contained in each hyperplane of the orbit $\cH_1$. 

There are $q^2+q$ hyperplanes $H\in\cH_{2,r}$ containing $\cC(P)$, and the number $\alpha$ of hyperplanes in $\cH_{2,r}$ through $P$ which are not containing $\cC(P)$, can be obtained as follows. 
Suppose $H\in \cH_{2,r}$ is a hyperplane, containing the two conic planes $\pi_i$, $i=1,2$, with $R=\pi_1\cap \pi_2$. Let $\pi$ be any conic plane different from $\pi_1$ and $\pi_2$, and let $L_\pi=\pi\cap H$. If $R\in L_\pi$ then $L_\pi$ is tangent to $\cV(\bF_q)$ in $\pi$. In this case $L_\pi$ contains exactly one point of $\cP_{2,n}$, namely the nucleus of the conic $\pi\cap\cV(\bF_q)$. If $R\notin L_\pi$ then $L$ is a 2-secant to $\cV(\bF_q)$ in $\pi$ and does not meet the nucleus plane of $\cV(\bF_q)$. Counting flags $(P,H)$ with $P\in \cP_{2,n}$, $H\in \cH_{2,r}$, and $\cC(P)$ not contained in $H$, gives
$$
|\cH_{2,r}| (q-1)=|\cP_{2,n}| ~ \alpha,
$$
which implies
$$\alpha=\frac{q^3-q}{2}, 
\mbox{ and hence, } h_{2,r}=\alpha+q^2+q=\frac{q^3+2q^2+q}{2}.$$

Since the number of points of $\cP_{2,n}$ in a hyperplane $H\in \cH_{2,i}$ is equal to 
$\beta=q+1$ (namely the nuclei of the $q+1$ conics through the point $H\cap \cV(\bF_q)$),
counting flags $(P,H)$ with $P\in \cP_{2,n}$ and $H\in \cH_{2,i}$, gives
$$
h_{2,i}=\frac{|\cH_{2,i}|}{|\cP_{2,n}|}(q+1)= \frac{q^3-q}{2},
$$
this determines $h_3=q^4-q^2$.

(ii) Let $P\in \cP_{2,s}$, and let $R$ be the unique point of $\cV(\bF_q)$ on the tangent line of $\cC(P)$ through $P$. Then each of the $q+1$ hyperplanes $H\in \cH_1$ containing $R$ contains $P$. Also, no other hyperplane $H\in \cH_1$ contains $P$, since any hyperplane $H\in \cH_1$ not through $R$ meets $\cC(P)$ in a point $Q$, such that the line $L$ through $P$ and $Q$ is a secant to $\cC(P)$, and if $H$ would contain $P$, then $H$ would contain two distinct points of $\cC(P)$, implying that $\cC(P)\subset H$. Therefore $h_1=q+1$.

As before, the number of hyperplanes $H\in \cH_{2,r}$ through $P$ is equal to $q^2+q+\alpha$ where $\alpha$ is the number of hyperplanes of $\cH_{2,r}$ through $P$, which are not containing $\cC(P)$. Let $H$ be such a hyperplane, and let $\pi_1$ and $\pi_2$ be the two conic planes in $H$. Let $R=\pi_1\cap \pi_2$. A conic plane $\pi\notin \{\pi_1,\pi_2\}$ meets $H$ in a line $L_\pi$ which is tangent to $\cC(\pi)$ if $R\in L_\pi$ and secant to $\cC(\pi)$ otherwise. So in both case the line $L_\pi$ contains $q-1$ points of $\cP_{2,s}$. This implies
$$
|\cH_{2,r}| \left ( (q-1)(q^2+q-1) \right )=|\cP_{2,s}| ~ \alpha,
$$
which gives 
$$
h_{2,r}=\alpha+q^2+q=\frac{q^3+3q^2+q}{2}.
$$
To prove that $h_{2,i}=\frac{q^3+q^2-q}{2}$, first note that the number of points of $\cP_{2,s}$ in a hyperplane $H$ of $\cH_{2,i}$ is equal to $\beta=(q^2+q-1)(q+1)$, since each conic plane meets $H$ in a line $L$ which contains $q-1$ points of $\cP_{2,s}$ if $L$ contains the point $H\cap\cV(\bF_q)$, $q+1$ points of $\cP_{2,s}$ if $L$ does not contain the point $H\cap\cV(\bF_q)$, and no point of $H\cap \cP_{2,s}$ is contained in two distinct conic planes. Therefore we obtain
$$
h_{2,i}=\frac{|\cH_{2,i}|}{|\cP_{2,s}|}(q^2+q-1)(q+1)= \frac{q^3+q^2-q}{2}.
$$
Finally, this gives 
$$h_3=q^4+q^3+q^2+q+1-\left ( q+1 + \frac{q^3+3q^2+q}{2} + \frac{q^3+q^2-q}{2}\right )=q^4-q^2,
$$
which completes the proof.
\end{proof}

Combining both theorems for $q$ odd and $q$ even, we obtain the following.

\begin{Corollary}
    Every point of the secant variety of $\cV(\Fq)$ lies in $q^4-q^2$ hyperplanes of type $\cH_3$.
\end{Corollary}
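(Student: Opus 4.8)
The plan is to read the statement off Theorems~\ref{thm:hyporbdist_pts_odd} and~\ref{thm:hyporbdist_pts_even} once the secant variety has been placed inside the rank stratification of $\PG(5,q)$. The one genuine ingredient is the classical description of the secant variety of the quadric Veronesean: the union of all secant and tangent lines of $\cV(\Fq)$ --- equivalently, the variety cut out by the $3\times 3$ minors of the generic symmetric $3\times 3$ matrix --- is precisely the set of points of $\PG(5,q)$ of rank at most $2$ (see e.g.\ \cite{hirsch}). First I would recall this, and observe that since $K$ acts on points by $M_P\mapsto AM_PA^T$, which preserves the rank of $M_P$, the secant variety is a union of $K$-orbits; concretely it equals $\cP_1\cup\cP_{2,e}\cup\cP_{2,i}$ for $q$ odd and $\cP_1\cup\cP_{2,n}\cup\cP_{2,s}$ for $q$ even, its complement in $\PG(5,q)$ being the single orbit $\cP_3$ of rank-$3$ points.

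Then I would simply invoke the two theorems. The last coordinate $h_3(P)$ of $OD_4(P)=[h_1,h_{2,r},h_{2,i},h_3]$ is, by definition, the number of hyperplanes of type $\cH_3$ through $P$, and it is constant on each $K$-orbit of points. Inspecting the last column of Table~\ref{table_hypOD_points} ($q$ odd) and Table~\ref{table_hypOD_points_even} ($q$ even), one sees $h_3=q^4-q^2$ in every row indexed by $\cP_1$ or by a rank-$2$ orbit. Since each point of the secant variety lies in one of these orbits, it lies in exactly $q^4-q^2$ hyperplanes of type $\cH_3$, as claimed. (By contrast $\cP_3$, which is exactly the complement of the secant variety, has $h_3=q^4$; so this count actually characterises the secant variety among the point-orbits.)

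There is really no obstacle here: the content lies entirely in Theorems~\ref{thm:hyporbdist_pts_odd} and~\ref{thm:hyporbdist_pts_even}, and the corollary merely records the coincidence of the entries in their last columns together with the identification of the secant variety with the rank-at-most-$2$ locus. One could try to avoid the full case analysis by double-counting flags $(P,H)$ with $P$ of rank at most $2$ and $H\in\cH_3$ --- using that $\cH_3$ is a single $K$-orbit, so that the number of rank-at-most-$2$ points of a hyperplane $H\in\cH_3$ (namely those lying on chords of the normal rational curve $H\cap\cV(\Fq)$) is independent of $H$ --- but such a count only delivers the common value as an average over the secant variety and still requires the constancy of $h_3$ on each rank-$2$ orbit; so the argument via the tables is the cleanest, and is the one I would present.
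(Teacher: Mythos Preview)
Your proposal is correct and follows exactly the approach the paper takes: the corollary is stated immediately after Theorems~\ref{thm:hyporbdist_pts_odd} and~\ref{thm:hyporbdist_pts_even} as a direct consequence of combining them, and your argument---identifying the secant variety with the rank-at-most-$2$ locus and reading $h_3=q^4-q^2$ off the last column of both tables---is precisely that. (One cosmetic point: for a $3\times 3$ matrix there is only a single $3\times 3$ minor, namely the determinant, so you might phrase the defining equation of the secant variety accordingly.)
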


\section{Webs of conics}\label{webs}

In this section, for each web ${\mathcal{W}}$ of conics in $\PG(2,q)$, we determine the number of different types of conics contained in $\mathcal W$. A web $\mathcal W$ corresponds to a line $\ell_{\mathcal{W}}$ in the dual $\PG(5,q)$ of $\PG({\mathcal{F}}_2(2,q))$, which we identify with the ambient space of the Veronese variety $\cV(\bF_q)$, as before. Consequently, the number of different types of conics contained in the web $\mathcal W$ corresponds to the hyperplane-orbit distribution $OD_4(\ell_{\mathcal{W}})$. The $K$-orbits of lines of $\PG(5,q)$ were  classified in \cite{lines}, and the notation $\ell_i$ is used for a line of type $o_i$ (see Tables \ref{tableoflinesodd} and \ref{tableoflineseven}), and $\mathcal{W}_i$ denotes a web of conics in $\PG(2,q)$ corresponding to $\ell_i$. We denote by $\mathcal{W}=(f_1,f_2,f_3,f_4)$ the web of conics $\cZ(af_1+bf_2+cf_3+df_4)$, $(a,b,c,d)\in \PG(3,q)$,
determined by $\mathcal{C}_i=\cZ(f_i)$ for $1\leq i\leq 4$, no three of which are contained in a net of conics.

\begin{Definition}
   \normalfont{  We define the \textit{cubic surface associated with 
   the web $\mathcal{W}=(f_1,f_2,f_3,f_4)$} as the zero locus  $\cZ(\Delta_f)$ in $\PG(3,q)$ of the discriminant $\Delta_f\in \bF_q[A,B,C,D]$ of the quadratic form $f=Af_1+Bf_2+Cf_3+Df_4$. We also refer to $\cZ(\Delta_f)$ as the}  \textit{cubic surface associated with the line $\ell_{\mathcal{W}}$}.

\end{Definition}

The main result of this section is Theorem \ref{webs1}, in which we prove that the  hyperplane-orbit distribution of lines gives a complete invariant for all but two of the $\PGL(3,q)$-equivalence classes of webs of conics in $\PG(2, q)$. In Theorem \ref{webs2}, we give a connection between the number of points of rank at most $2$ on a line $L$ of $\PG(5,q)$ and the cardinality of its associated cubic surface. A number of interesting corollaries can be drawn from these results. Particularly, we conclude in Corollary \ref{aux5} that lines of $\PG(5,q)$ that have the same point-orbit distribution, also share the same hyperplane-orbit distribution. We end this section by concluding that a line $L$ in $\PG(5, q)$ having $q + i$ points of rank $3$ lies in $q^3 + iq = q(q^2 + i)$
hyperplanes in $\cH_3$. This observation implies that points of rank at most $2$ on a line of $\PG(5, q)$ entirely characterize the singularity
of the corresponding web of conics in $\PG(2, q)$.\\

Note that, as a consequence of the existence of a polarity of $\PG(5, q)$, $q$ odd, that maps the set of conic planes of $\cV(\Fq)$ onto the set of tangent planes of $\cV(\Fq)$, the hyperplane-orbit distributions of lines in $\PG(5,q)$ correspond to point-orbit distributions of solids of $\PG(5,q)$ for $q$ odd.

\begin{Lemma}
The hyperplane-orbit distribution of a line in $o_5$ is $[1,2q^2+q,0,q^3-q^2]$. 
\end{Lemma}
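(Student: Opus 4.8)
The plan is to compute the four entries of $OD_4(\ell_5)=[h_1,h_{2,r},h_{2,i},h_3]$ directly from a concrete representative of the line-orbit $o_5$, using the matrix picture of $\PG(5,q)$ and its point-orbit distribution $OD_0(\ell_5)$ listed in Table~\ref{tableoflinesodd}. First I would fix a canonical representative $\ell_5=\langle P,Q\rangle$ together with the explicit description of the $q+1$ points of $\ell_5$ via their matrices $M_{aP+bQ}$; this lets me read off exactly which points of $\ell_5$ have rank $1$, $2$, $3$, and (for $q$ even / odd as needed) whether the rank-$2$ points are of the $e/i$ or $n/s$ type. Since a hyperplane $H=\cZ(f)$ contains $\ell_5$ iff it kills both $P$ and $Q$, the hyperplanes through $\ell_5$ form a solid $\ell_5^{\perp}$ in the dual $\PG(5,q)$, so I would parametrise that solid by the two linear conditions coming from $P$ and $Q$ and then ask, for each of the $q^3+q^2+q+1$ dual points in it, which orbit $\cH_1,\cH_{2,r},\cH_{2,i},\cH_3$ it lies in. The membership test for $\cH_3$ is exactly the non-vanishing of the dual discriminant from the Lemma preceding this one, and $\cH_1$ versus $\cH_{2,r}$ versus $\cH_{2,i}$ is decided by whether the corresponding conic in $\PG(2,q)$ is a double line, a pair of real lines, or a pair of conjugate lines — all of which I can test on the restricted $4$-parameter family of symmetric matrices.

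Concretely I expect the bookkeeping to reduce to: (a) $h_1$ counts hyperplanes of $\cH_1$ through $\ell_5$, i.e. double-line conics whose associated hyperplane contains both $P$ and $Q$ — geometrically these are the tangent planes/conic-plane configurations meeting $\ell_5$, and I anticipate $h_1=1$, forced by the fact that $\ell_5$ meets $\cV(\Fq)$ in the prescribed way and two hyperplanes of $\cH_1$ meet in a solid that cannot contain a whole line of the given rank pattern (this mirrors the $h_1=1$ argument for $\cP_{2,i}$ in the proof of Theorem~\ref{thm:hyporbdist_pts_odd}); (b) $h_3=q^3-q^2$, which I would get either by the same flag-counting philosophy used for points — $\sum_{P\in\ell_5}h_3(P)$ counted with multiplicity against $|\cH_3\cap\ell_5^{\perp}|$ — or, more cleanly, from the corollary that every point of the secant variety lies in $q^4-q^2$ hyperplanes of $\cH_3$ together with the known number of rank-$3$ points on $\ell_5$; indeed the last sentence of the section's preamble already announces that a line with $q+i$ points of rank $3$ lies in $q^3+iq$ hyperplanes of $\cH_3$, and for $o_5$ the point-orbit distribution gives $i=0$, hence $h_3=q^3$... wait, that would contradict the claimed $q^3-q^2$, so in fact $\ell_5$ must have $q-q^2$-adjusted rank-$3$ count, i.e. fewer than $q$ rank-$3$ points, and I would pin down the exact value of $i$ (here $i=-q^2/q\cdot$, so the rank-$3$ count is $q - q^2$ is impossible; the resolution is that $o_5$ has exactly $q - (q^2 - 0)$... ) — the safe route is to read the rank distribution of $\ell_5$ off Table~\ref{tableoflinesodd} and plug in.

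Given $h_1$ and $h_3$, the remaining two numbers satisfy $h_{2,r}+h_{2,i}=q^3+q^2+q+1-h_1-h_3=q^3+q^2+q$ (with $h_1=1$, $h_3=q^3-q^2$ this is $2q^2+q$, matching the statement $h_{2,r}=2q^2+q$, $h_{2,i}=0$). So the crux is to show $h_{2,i}=0$, i.e. \emph{no} hyperplane meeting $\cV(\Fq)$ in a single point contains $\ell_5$. I would argue this from the rank pattern: a hyperplane $H\in\cH_{2,i}$ meets each conic plane in a line, and contains no rank-$1$ point except one; but the representative of $o_5$ contains rank-$1$ points (or a specific tangency configuration) that cannot be accommodated — more precisely, I would show that every hyperplane through $\ell_5$ containing $H\cap\cV(\Fq)$ forces the imaginary-line conic's symmetric matrix to have a forbidden rank profile on the $4$-parameter pencil, using the even/odd trace criterion recalled in Section~\ref{pre}. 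Then $h_{2,r}=q^3+q^2+q-0-\,$? no: $h_{2,r}=q^3+q^2+q-h_{2,i}$ minus nothing, giving $2q^2+q$ after substituting $h_1,h_3$ — consistent.

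\textbf{Main obstacle.} The genuinely delicate step is establishing $h_{2,i}=0$ and simultaneously getting the exact rank-$3$ count on $\ell_5$ right (so that $h_3=q^3-q^2$ rather than $q^3$); everything else is flag-counting and the dual-discriminant test. I would isolate the rank distribution of the $o_5$-representative from Table~\ref{tableoflinesodd} first, then run the three conic-type tests on the explicit $4$-dimensional family of symmetric $3\times3$ matrices vanishing on $\ell_5$, and finally double-check via the identity $h_1+h_{2,r}+h_{2,i}+h_3=q^3+q^2+q+1$ together with the two independent flag counts against $|\cH_3|$ and $|\cH_{2,i}|$.
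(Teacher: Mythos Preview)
Your outline has the right scaffolding but misses the one geometric fact that makes this lemma trivial, and your attempt to invoke the ``$q+i$ rank-$3$ points $\Rightarrow$ $q^3+iq$ hyperplanes in $\cH_3$'' principle is both circular and misapplied.

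First, you never actually consult the point-orbit distribution of $o_5$ from Table~\ref{tableoflinesodd}/\ref{tableoflineseven}: it is $[2,\tfrac{q-1}{2},\tfrac{q-1}{2},0]$ for $q$ odd and $[2,0,q-1,0]$ for $q$ even. So $\ell_5$ carries \emph{two} rank-$1$ points $R_1=\nu(r_1)$, $R_2=\nu(r_2)$ and \emph{zero} rank-$3$ points. Your paragraph about $i=0$ and $h_3=q^3$ comes from misreading this; with zero rank-$3$ points one has $q+i=0$, i.e.\ $i=-q$, which does give $q^3-q^2$. But more importantly, that principle is Corollary~\ref{webs3}, proved \emph{after} and \emph{from} all the individual lemmas of this section, so you cannot cite it here.

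Second, once you see that $\ell_5$ meets $\cV(\Fq)$ in two points, the two numbers you flagged as ``the genuinely delicate step'' are immediate. Every hyperplane in $\cH_{2,i}$ meets $\cV(\Fq)$ in exactly one point; a hyperplane containing $\ell_5$ would contain both $R_1$ and $R_2$, contradiction, so $h_{2,i}=0$. For $h_1$: a hyperplane in $\cH_1$ meets $\cV(\Fq)$ in a single conic of $\cV(\Fq)$, and the unique conic of $\cV(\Fq)$ through $R_1,R_2$ is $\cC(R_1,R_2)=\nu(\langle r_1,r_2\rangle)$; the corresponding $\cH_1$-hyperplane contains the conic plane and hence the secant $\ell_5$, while any other $\cH_1$-hyperplane misses at least one of $R_1,R_2$. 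Thus $h_1=1$. Your analogy with the $\cP_{2,i}$ case of Theorem~\ref{thm:hyporbdist_pts_odd} is the wrong model; that argument uses that the intersection of two $\cH_1$-hyperplanes contains no interior points, which is not what is relevant here.

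The paper then obtains $h_{2,r}$ directly by counting pairs of lines $\ell\cup\ell'$ in $\PG(2,q)$ with $\{r_1,r_2\}\subset \ell\cup\ell'$: there are $q^2+q$ with $\ell=\langle r_1,r_2\rangle$ and $q^2$ with $r_1\in\ell$, $r_2\in\ell'$ and neither equal to $\langle r_1,r_2\rangle$, giving $2q^2+q$; then $h_3$ falls out of the total $q^3+q^2+q+1$. Your dual-solid parametrisation would reach the same answer, but the geometric count is shorter and avoids any case analysis on the discriminant.
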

\begin{proof}
Let $\ell_5$ be a line of type $o_5$. The point-orbit distribution of $\ell_5$ is given by $[2,\frac{q-1}{2},\frac{q-1}{2},0]$ for $q$ odd and $[2,0,q-1,0]$ for $q$ even (see Tables \ref{tableoflinesodd} and \ref{tableoflineseven}). Let $R_1=\nu(r_1)$ and $R_2=\nu(r_2)$ denote the two rank-$1$ points on $\ell_5$. Then $\ell_5$ is contained in a unique hyperplane in $\cH_1$, namely $\nu(\langle r_1, r_2\rangle)$. Additionally, there are $q^2+q$ pairs of real lines $\ell\cup\ell'$ with $\ell=\langle r_1, r_2\rangle$ such that the span of their images under $\nu$ contains $\ell_5$. Moreover, there are $q^2$ pairs of real lines $\ell\cup\ell'$ in $\PG(2,q)$ where $r_1\in \ell$ and $r_2 \in \ell'$, with $\langle r_1, r_2\rangle \notin \{\ell,\ell'\}$. Given that $|\ell_5 \cap \cV(\Fq)|=2$, it follows that no hyperplane in $\cH_{2,i}$  contains $\ell_5$. Hence, $h_1(\ell_5)=1$, $h_{2r}(\ell_5)=2q^2+q$ and $h_3(\ell_5)=q^3-q^2$.
\end{proof}

\begin{Lemma}
The hyperplane-orbit distribution of a line in $o_6$ is $[q+1, \frac{3q^2+q}{2},\frac{q^2-q}{2}, q^3-q^2]$.
\end{Lemma}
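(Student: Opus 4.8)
The plan is to follow the template of the preceding lemma: recognise a line $\ell_6$ of type $o_6$ geometrically, compute $h_1$, $h_{2,r}$ and $h_{2,i}$ in turn, and recover $h_3$ from the fact that a line of $\PG(5,q)$ lies in exactly $q^3+q^2+q+1$ hyperplanes. From Tables \ref{tableoflinesodd} and \ref{tableoflineseven}, $\ell_6$ has point-orbit distribution $[1,q,0,0]$ for $q$ odd and $[1,1,q-1,0]$ for $q$ even; in both cases $\ell_6$ is a tangent line of $\cV(\Fq)$, tangent at its unique rank-$1$ point $R$ to the unique conic $\cC$ whose conic plane $\pi$ contains $\ell_6$, and conversely every tangent line of $\cV(\Fq)$ is of type $o_6$. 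I will also use that the tangent lines of $\cV(\Fq)$ at a point $R'$ are exactly the $q+1$ lines of the tangent plane $T_{R'}$ through $R'$, so that $|o_6|=(q+1)(q^2+q+1)$, together with the orbit sizes $|\cH_{2,r}|=\frac{1}{2}q(q+1)(q^2+q+1)$ and $|\cH_{2,i}|=\frac{1}{2}q(q-1)(q^2+q+1)$ recalled in Section \ref{squabs}.

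\emph{Computing $h_1$.} Every $H\in\cH_1$ through $\ell_6$ contains $R$, hence $R$ lies on the conic $H\cap\cV(\Fq)$ and on the conic plane $\pi_H\subseteq H$, so $\pi_H$ is one of the $q+1$ conic planes through $R$. Conversely, for each conic plane $\pi'$ through $R$ the unique hyperplane $H(\pi')\in\cH_1$ containing $\pi'$ contains the whole tangent plane $T_R$, and so $\ell_6\subseteq T_R\subseteq H(\pi')$. This is the one genuinely geometric step; I would verify it in coordinates with $\nu^{-1}(R)=(1,0,0)$, so that $R=(1,0,0,0,0,0)$ and $T_R=\cZ(Y_3,Y_4,Y_5)$, the conic planes through $R$ corresponding to the lines of $\PG(2,q)$ through $(1,0,0)$: then $\delta$ sends the double line supported on $\langle(1,0,0),(0,a,b)\rangle$ to a hyperplane $\cZ(b^2Y_3-2abY_4+a^2Y_5)$ (only $Y_3,Y_4,Y_5$ occurring, and just $\cZ(b^2Y_3+a^2Y_5)$ in characteristic two), which contains $\cZ(Y_3,Y_4,Y_5)=T_R$. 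Hence $h_1(\ell_6)=q+1$.

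\emph{Computing $h_{2,r}$.} Split the hyperplanes $H\in\cH_{2,r}$ through $\ell_6$ according to whether $\pi\subseteq H$ or $H\cap\pi=\ell_6$. If $\pi\subseteq H$ then $H=\langle\pi,\pi'\rangle$ for a second conic plane $\pi'\ne\pi$; conversely every such span is a hyperplane of $\cH_{2,r}$ (two conic planes meet in a rank-$1$ point and span a hyperplane whose $\cV(\Fq)$-section is a pair of conics), and distinct $\pi'$ give distinct $H$ because a hyperplane of $\cH_{2,r}$ contains exactly two conic planes; this accounts for $q^2+q$ hyperplanes, all containing $\ell_6$. For those with $H\cap\pi=\ell_6$, let $\alpha$ be their number (the same for every line of $o_6$ by $K$-transitivity) and count flags $(\ell,H)$ with $\ell\in o_6$, $H\in\cH_{2,r}$, $\ell\subseteq H$ and the conic plane of $\ell$ not contained in $H$. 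For a fixed such $H$, with conic planes $\pi_1,\pi_2$ and $R'=\pi_1\cap\pi_2$, any such $\ell$ must equal $H\cap\pi'$ for a conic plane $\pi'\notin\{\pi_1,\pi_2\}$ meeting $H$ in a tangent line of $\cV(\Fq)$, i.e.\ in a line through $R'$; by the count already carried out in the proof of Theorem \ref{thm:hyporbdist_pts_odd}(ii) (and in Theorem \ref{thm:hyporbdist_pts_even} for $q$ even) there are exactly $q-1$ such $\pi'$. Hence $|o_6|\,\alpha=|\cH_{2,r}|(q-1)$, so $\alpha=\frac{1}{2}q(q-1)$ and $h_{2,r}(\ell_6)=q^2+q+\frac{q^2-q}{2}=\frac{3q^2+q}{2}$.

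\emph{Computing $h_{2,i}$ and $h_3$.} For $H\in\cH_{2,i}$ let $R_H$ be its unique point of $\cV(\Fq)$. Any $o_6$-line in $H$ has its rank-$1$ point in $H\cap\cV(\Fq)$, hence passes through $R_H$; and $H$ contains $T_{R_H}$ (in coordinates with $\nu^{-1}(R_H)=(1,0,0)$ one has $H=\delta(\cZ(f))$ with $f=a_{11}X_1^2+a_{12}X_1X_2+a_{22}X_2^2$ an irreducible binary quadratic, so $H=\cZ(a_{11}Y_3+a_{12}Y_4+a_{22}Y_5)\supseteq\cZ(Y_3,Y_4,Y_5)=T_{R_H}$), so all $q+1$ tangent lines at $R_H$ lie in $H$. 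Counting flags $(\ell,H)$ with $\ell\in o_6$ and $H\in\cH_{2,i}$ gives $|o_6|\,h_{2,i}(\ell_6)=|\cH_{2,i}|(q+1)$, whence $h_{2,i}(\ell_6)=\frac{1}{2}q(q-1)=\frac{q^2-q}{2}$, and finally $h_3(\ell_6)=(q^3+q^2+q+1)-(q+1)-\frac{3q^2+q}{2}-\frac{q^2-q}{2}=q^3-q^2$. The main obstacle is the two tangency facts — $T_R\subseteq H(\pi')$ for $H(\pi')\in\cH_1$, and $T_{R_H}\subseteq H$ for $H\in\cH_{2,i}$ — each of which, however, reduces to a short computation through $\delta$ once the relevant rank-$1$ point is normalised; a secondary care point is checking in the $h_{2,r}$ flag count that the tangent lines $H\cap\pi'$ are exactly the $o_6$-lines in $H$ whose conic plane is not contained in $H$. (For $q$ odd one could alternatively read the distribution off the point-orbit distribution of the corresponding solid via the polarity interchanging conic planes and tangent planes, but the argument above is uniform in the parity of $q$.)
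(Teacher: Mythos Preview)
Your proof is correct and takes a genuinely different route from the paper's. The paper argues computationally: it writes the web $\mathcal{W}_6=(X_0X_2,X_1^2,X_1X_2,X_2^2)$ associated with the representative $\ell_6$, imposes the discriminant condition $a^2b=0$ on $aX_0X_2+bX_1^2+cX_1X_2+dX_2^2$, and then counts double lines, real pairs, imaginary pairs and non-singular conics by a direct case analysis on $a,b,c,d$ (with separate treatment of odd and even $q$ in the subcase $a=0$, $b=1$).

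Your argument is instead geometric and mirrors the style of the proofs of Theorems~\ref{thm:hyporbdist_pts_odd} and~\ref{thm:hyporbdist_pts_even}: you identify $o_6$ with the set of tangent lines of $\cV(\Fq)$, obtain $h_1$ from the containment $T_R\subseteq H$ for every $H\in\cH_1$ through $R$, compute $h_{2,r}$ via the split ``$\pi\subseteq H$'' versus ``$\pi\cap H=\ell_6$'' together with a flag count borrowing the ``$q-1$ tangent intersections'' fact from those theorems, and obtain $h_{2,i}$ by a second flag count using $T_{R_H}\subseteq H$. The paper's approach is self-contained and requires no external input beyond the representative, at the cost of a parity-dependent case analysis; your approach is uniform in the parity of $q$ and conceptually cleaner, but depends on the tangency facts (which you correctly reduce to short coordinate computations via $\delta$) and on the $(q-1)$-count already established in Section~\ref{squabs}. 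Both are valid; yours better explains \emph{why} the numbers come out as they do.
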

\begin{proof}

Consider the representative $\ell_6$ of the line-orbit $o_6$ 
as described in Tables \ref{tableoflinesodd} and \ref{tableoflineseven}. The associated web of conics is defined by $\mathcal{W}_6=(X_0X_2, X_1^2, X_1X_2, X_2^2)$. The singular conics
in $\mathcal{W}_6$  are the zero locus of $aX_0X_2+ bX_1^2+c X_1X_2+ dX_2^2$ with $a^2b=0$. If $a=b=0$, $\mathcal{W}_6$ contains the double line $X_2^2=0$ and $q$ pairs of real lines defined by $X_2(X_1+dX_2)=0$. In the case of $a=1$ and $b=0$, $\mathcal{W}_6$ has $q^2$ pairs of real lines defined by $X_2(X_0+cX_1+dX_2)=0$. Now, assume that $a=0$ and $b=1$. If $c=0$, singular conics of $\mathcal{W}_6$ reduce to $X_1^2+dX_2^2=0$, defining $q$ double lines if $q$ is even, and one double line, $\frac{q-1}{2}$ pairs of real lines, and $\frac{q-1}{2}$ pairs of imaginary lines if $q$ is odd. This depends on whether $d$ is zero, a non-zero square, or a non-square respectively. When $c\neq 0$, the quadratic $X_1^2+cX_1X_2+dX_2^2=0$ defines 
$\frac{q^2-q}{2}$ pairs of real lines and $\frac{q^2-q}{2}$ pairs of imaginary lines if $q$ is even depending on $\operatorname{Tr}(c^{-2}d)$ being zero or one respectively. For odd $q$, it results in $q-1$ double lines, $\frac{(q-1)^2}{2}$ pairs of real lines, and $\frac{(q-1)^2}{2}$ pairs of imaginary lines depending on $c^2-4d$ being zero, a non-zero square, or a non-square respectively. Therefore, $\mathcal{W}_6$ has a total of $q+1$ double lines, $\frac{3q^2+q}{2}$ pairs of real lines, $\frac{q^2-q}{2}$ pairs of imaginary lines, and $q^3-q^2$ non-singular conics.
\end{proof}

\begin{Lemma}
The hyperplane-orbit distribution of a line in $o_{8,1}$ is $[1,q^2+\frac{3}{2}q,\frac{q}{2},q^3-q]$ for $q$ even, and $[2, q^2+\frac{3q-1}{2},\frac{q-1}{2}, q^3-q]$ for $q$ odd.
\end{Lemma}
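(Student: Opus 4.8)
The plan is to follow the method already used for $o_6$: work with an explicit representative and read off the answer from the discriminant cubic. Fix the representative $\ell_{8,1}$ of the orbit $o_{8,1}$ from Tables \ref{tableoflinesodd} and \ref{tableoflineseven}, together with its associated web $\mathcal{W}_{8,1}=(f_1,f_2,f_3,f_4)$ and generic member $f=Af_1+Bf_2+Cf_3+Df_4$. By the bijection $\delta$ between conics of $\PG(2,q)$ and hyperplanes of $\PG(5,q)$, the hyperplanes through $\ell_{8,1}$ correspond bijectively to the conics of $\mathcal{W}_{8,1}$, and this correspondence respects the four types (double line $\leftrightarrow\cH_1$, pair of real lines $\leftrightarrow\cH_{2,r}$, pair of imaginary lines $\leftrightarrow\cH_{2,i}$, non-singular conic $\leftrightarrow\cH_3$). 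Since a web contains $q^3+q^2+q+1$ conics, it suffices to count the conics of each type in $\mathcal{W}_{8,1}$.

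First I would compute the discriminant $\Delta_f\in\bF_q[A,B,C,D]$ of the generic member explicitly. The non-singular conics of $\mathcal{W}_{8,1}$ are exactly the points of $\PG(3,q)$ off the associated cubic surface $\cZ(\Delta_f)$, so $h_3=q^3+q^2+q+1-|\cZ(\Delta_f)|$; consistency with the claimed $h_3=q^3-q$ forces $|\cZ(\Delta_f)|=(q+1)^2$, and the first concrete task is to identify $\cZ(\Delta_f)$ (I expect $\Delta_f$ to factor, or at least to define a surface whose points parametrize cleanly) and verify this count. Then, for each point $(A,B,C,D)\in\cZ(\Delta_f)$, I would classify the corresponding singular conic: whether the form $f$ has rank one (a double line, contributing to $h_1$) or rank two, and in the rank-two case whether its two linear factors lie over $\bF_q$ (contributing to $h_{2,r}$) or only over $\bF_{q^2}$ (contributing to $h_{2,i}$). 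For $q$ odd this dichotomy is governed by whether a suitable coefficient expression is zero, a nonzero square, or a non-square; for $q$ even it is governed by the trace criterion for quadratics recalled in Section \ref{pre}, exactly as in the proof for $o_6$. Summing these contributions over all points of the cubic surface, separately for the two parities, produces the two claimed tuples; as built-in checks, the four entries must sum to $q^3+q^2+q+1$ and the non-singular count must equal $q^3-q$. As a further cross-check, $h_1$ (and possibly $h_{2,r}+h_{2,i}$) can be obtained independently by a flag-count against $\cH_1$ as in the $o_5$ proof, using the position of $\ell_{8,1}$ relative to $\cV(\bF_q)$ recorded by its point-orbit distribution; for $q$ odd one may also reach $OD_4(\ell_{8,1})$ through the polarity identifying hyperplane-orbit distributions of lines with point-orbit distributions of solids, but the direct discriminant computation is more uniform across the two parities.

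The main obstacle will be the characteristic-two analysis. There the discriminant form degenerates, the distinction between a double line and a genuine pair of distinct lines cannot be detected by a matrix rank and instead requires tracking the cross (linear-in-the-other-variable) terms of $f$, and the real/imaginary split is controlled by a trace condition whose fibres must be counted exactly; a family of the shape $X_1^2+dX_2^2$, which gives one double line and $(q-1)/2$ pairs of each kind for $q$ odd, degenerates to $q$ double lines for $q$ even (as in the $o_6$ argument), and it is precisely such coalescences that account for the different values $h_1=2$ (odd) versus $h_1=1$ (even) and for the half-integer-looking entries $q^2+\tfrac{3}{2}q$ and $\tfrac{q}{2}$. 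Keeping this casework complete — in particular ensuring every point of $\cZ(\Delta_f)$ is assigned to exactly one of the three singular types — is where the care is needed; the $q$-odd case is comparatively routine once the cubic surface has been parametrized, since the square/non-square alternative behaves symmetrically.
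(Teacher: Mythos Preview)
Your plan is correct and matches the paper's approach almost exactly: the paper works with the explicit web $\mathcal{W}_{8,1}=(X_0X_1,X_0X_2,X_1X_2,X_1^2+X_2^2)$, for $q$ even runs the discriminant-plus-trace case analysis you outline, and for $q$ odd counts the $(q+1)^2$ rational points of the cubic $\cZ(2ABD-C(A^2+B^2))$ and then separates $\cH_{2,r}$ from $\cH_{2,i}$ via the principal-minors square/non-square criterion (\cite[Lemma~3.7]{nets}), which is precisely your ``suitable coefficient expression''. The only detail you might not have anticipated is that for $q$ odd the paper uses the minors test on the symmetric matrix of $f$ rather than factoring $f$ directly, but this is the same idea packaged slightly differently.
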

\begin{proof}

Consider the representative $\ell_{8,1}$ of the line-orbit $o_{8,1}$ 
as described in Tables \ref{tableoflinesodd} and \ref{tableoflineseven}. The associated web of conics is denoted by $\mathcal{W}_{8,1}=(X_0X_1, X_0X_2, X_1X_2, X_1^2+X_2^2)$.
The singular conics in $\mathcal{W}_{8,1}$ are the zero locus of
$aX_0X_1 +b X_0X_2 +cX_1X_2+d(X_1^2+X_2^2)$
with $abc-d(a^2+b^2)=0$. 

For $q$ even, first assume that $a=b$. Consequently, $ac=0$. For $a=c=0$, one obtains a double line. When $a\neq0$ and $c=0$, the forms $(X_1+X_2)(X_0+d(X_1+X_2))=0$, $d\in \bF_q$, define $q$ pairs of real lines. If $a=0$ and $c\neq 0$, the reduced forms $X_1X_2+d(X_1^2+X_2^2)$ give $\frac{q}{2}$ pairs of real lines and $\frac{q}{2}$ pairs of imaginary lines.  Now, assume $a\neq b$. The above condition for singular conics then leads to $d=\frac{abc}{b^2+a^2}$. This results in $q^2$ pairs of real lines of the form: $(aX_1+bX_2)(cbX_1+acX_2+(b^2+a^2)X_0)$. Consequently, $\mathcal{W}_{8,1}$ consists of a unique double line, along with $q^2+\frac{3}{2}q$ pairs of real lines, $\frac{q}{2}$ pairs of imaginary lines, and $q^3-q$ non-singular conics when $q$ is even. 

For $q$ odd, singular conics in the web $\mathcal{W}_{8,1}$ correspond to points on the cubic surface 
$${\mathcal{X}}=\cZ(2ABD-C(A^2+B^2)),$$ 
where the cubic form in $\bF_q[A,B,C,D]$ is obtained from the determinant of the matrix 
$$
M(a,b,c,d)=\begin{bmatrix}\cdot&a&b\\a&c&d\\ b&d&c\end{bmatrix},
$$
representing the hyperplanes through $\ell_{8,1}$. We count $\bF_q$-rational points in ${\mathcal{X}}$. First we count the points of ${\mathcal{X}}$ in $\cZ(AB)$, which intersects ${\mathcal{X}}$ in the union of the three concurrent lines $\cZ(A,B)$, $\cZ(B,C)$ and $\cZ(A,C)$. This gives $3q+1$ points on $\mathcal X$ in $\cZ(AB)$. Outside $\cZ(AB)$, there are $q(q-1)$ points on $\mathcal X$ parameterized by $(1,b,c,\frac{c(1+b^2)}{2b})$, $b\neq 0$. Therefore, the total number of $\bF_q$-rational points on $\mathcal X$ is $q^2+2q+1$. The double lines in $\mathcal{W}_{8,1}$ correspond to points with coordinates $(a,b,c,d)$ on $\mathcal X$ such that the matrix 
$
M(a,b,c,d)$
has rank one. This gives $2$ points with coordinates $(0,0,1,\pm1)$. We use \cite[Lemma 3.7]{nets} to determine $h_{2,r}(\ell_{8,1})$ and $h_{2,i}(\ell_{8,1})$. Let $m_{11}$, $m_{22}$, and $m_{33}$ denote the three $2\times 2$ principal minors $m_{11}$, $m_{22}$, and $m_{33}$ of $M(a,b,c,d)$, then the point $(a,b,c,d)$ belongs to the orbit $\cP_{2,e}$ if and only if all of $-m_{11}$, $-m_{22}$, and $-m_{33}$ are squares in $\bF_q$, and at least one of them is non-zero. Here
$-m_{11}=d^2-c^2$, $-m_{22}=b^2$, and $-m_{33}=a^2$. It follows that all rank-$2$ points on $\mathcal X\cap (\cZ(A,C)\cup \cZ(B,C)) $ are exterior. Similarly, as $\frac{c^2(1+b^2)^2}{4b^2}-c^2$ is always a square, it follows that the $q^2-q$ rank-2 points on $\mathcal X\setminus \cZ(AB)$ are all exterior. On $\mathcal X\cap \cZ(A,B)$, exterior points correspond to pairs $(c,d)\in \Fq^2\setminus\{(0,0)\}$ such that $d^2-c^2\in \square_q$. This gives $\frac{q-1}{2}$ exterior points, $\frac{q-1}{2}$ interior points and $2$ rank-$1$ points on $\mathcal X\cap \cZ(A,B)$. Therefore,  $\mathcal{W}_{8,1}$ features $2$ double lines, $q^2+\frac{3q-1}{2}$ pairs of real lines, $\frac{q-1}{2}$ pairs of imaginary lines, and $q^3-q$ non-singular conics over finite fields of odd characteristic.
\end{proof}

\begin{Lemma}
The hyperplane-orbit distribution of a line in $o_{8,2}$ is $[0,q^2+\frac{3q+1}{2},\frac{q+1}{2},q^3-q]$. Note that these types of lines are only defined for $q$ odd.
\end{Lemma}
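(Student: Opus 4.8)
The plan is to mimic, for $q$ odd, the $o_{8,1}$ argument given above. First I would fix the representative $\ell_{8,2}$ of Table~\ref{tableoflinesodd}, whose web is the ``twist'' of $\mathcal{W}_{8,1}$ obtained by replacing $X_1^2+X_2^2$ with $X_1^2+\nu X_2^2$ for a fixed non-square $\nu\in\Fq$, so that $\mathcal{W}_{8,2}=(X_0X_1,X_0X_2,X_1X_2,X_1^2+\nu X_2^2)$ and the conics of $\mathcal{W}_{8,2}$ (equivalently the hyperplanes through $\ell_{8,2}$) are parametrised by
\[
M(a,b,c,d)=\begin{bmatrix}\cdot&a&b\\ a&c&d\\ b&d&\nu c\end{bmatrix},
\]
whose determinant $2ABD-C(\nu A^2+B^2)$ cuts out the cubic surface $\mathcal{X}\subset\PG(3,q)$ of singular conics of $\mathcal{W}_{8,2}$. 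Since the rank of a point of $\mathcal{X}$ equals the rank of $M(a,b,c,d)$, and since $h_1+h_{2,r}+h_{2,i}=|\mathcal{X}(\Fq)|$ while $h_1+h_{2,r}+h_{2,i}+h_3=q^3+q^2+q+1$, the statement reduces to counting $\Fq$-points of $\mathcal{X}$ by rank and, among the rank-$2$ ones, by $K$-orbit ($\cP_{2,e}$ versus $\cP_{2,i}$).

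I would then check $h_1=0$: a rank-$1$ symmetric matrix with vanishing $(1,1)$-entry forces $a=b=0$, and then rank $1$ of the lower-right $2\times2$ block gives $\nu c^2=d^2$, impossible for $(c,d)\neq(0,0)$ since $\nu\notin\square_q$. The count $|\mathcal{X}(\Fq)|=(q+1)^2$ follows exactly as for $o_{8,1}$: $\mathcal{X}\cap\cZ(AB)$ is the union of the three lines $\cZ(A,B)$, $\cZ(A,C)$, $\cZ(B,C)$, all through $(0:0:0:1)$ and pairwise meeting nowhere else, contributing $3q+1$ points, while off $\cZ(AB)$ the surface is parametrised by $(1:b:c:\tfrac{c(\nu+b^2)}{2b})$ with $b\neq0$, contributing $q(q-1)$ more. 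Hence every point of $\mathcal{X}$ has rank exactly $2$, so $h_{2,r}+h_{2,i}=(q+1)^2$.

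To separate $\cP_{2,e}$ from $\cP_{2,i}$ I would invoke \cite[Lemma 3.7]{nets} with the negatives of the principal $2\times2$ minors, $-m_{11}=d^2-\nu c^2$, $-m_{22}=b^2$, $-m_{33}=a^2$: a rank-$2$ point of $\mathcal{X}$ lies in $\cP_{2,i}$ iff $d^2-\nu c^2$ is a non-square, since $-m_{22}$ and $-m_{33}$ are always squares. On $\cZ(A,C)$ and $\cZ(B,C)$ one has $c=0$, so $-m_{11}=d^2$ is a square; and on the parametrised part the identity $(\nu+b^2)^2-4\nu b^2=(\nu-b^2)^2$ gives $-m_{11}=\bigl(\tfrac{c(\nu-b^2)}{2b}\bigr)^2$, again a square. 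So every point of $\cP_{2,i}$ on $\mathcal{X}$ lies on $\cZ(A,B)=\{(0:0:c:d)\}$, where it is of the form $(0:0:1:d)$ with $d^2-\nu$ a non-square. Since $\nu\notin\square_q$ the value $d^2-\nu$ never vanishes, and counting the affine $\Fq$-points of the conic $d^2-e^2=\nu$ (i.e.\ $XY=\nu$ after a linear change of coordinates, giving $q-1$ points) shows $d^2-\nu$ is a non-zero square for exactly $\tfrac{q-1}{2}$ values of $d$ and a non-square for the remaining $\tfrac{q+1}{2}$. Hence $h_{2,i}=\tfrac{q+1}{2}$, $h_{2,r}=(q+1)^2-\tfrac{q+1}{2}=q^2+\tfrac{3q+1}{2}$, and $h_3=q^3+q^2+q+1-(q+1)^2=q^3-q$.

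The hard part will be the square/non-square bookkeeping in the last step: correctly reading the $K$-orbit of a rank-$2$ point off \cite[Lemma 3.7]{nets}, spotting the factorisation $(\nu+b^2)^2-4\nu b^2=(\nu-b^2)^2$ which forces every ``generic'' point of $\mathcal{X}$ into $\cP_{2,e}$, and performing the conic count on $\cZ(A,B)$ while keeping track of the fact that the three lines of $\mathcal{X}\cap\cZ(AB)$ are concurrent only at $(0:0:0:1)$, so that no point is double-counted. Everything else is a routine variant of the $o_{8,1}$ argument, here simplified because only $q$ odd has to be handled.
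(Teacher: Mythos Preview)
Your proposal is correct and follows essentially the same approach as the paper's proof: both compute the cubic surface of singular conics, count its $\Fq$-points as $(q+1)^2$, verify there are no rank-$1$ points, and then use \cite[Lemma 3.7]{nets} together with the factorisation $(\nu+b^2)^2-4\nu b^2=(\nu-b^2)^2$ to show that all interior points lie on $\cZ(A,B)$, where the non-square count gives $h_{2,i}=\tfrac{q+1}{2}$. The only cosmetic difference is that the paper places the non-square coefficient on $X_1^2$ rather than on $X_2^2$, which swaps the roles of $A$ and $B$ in the cubic but changes nothing of substance.
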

\begin{proof}

Consider the representative $\ell_{8,2}$ of the line-orbit $o_{8,2}$ 
as described in Table \ref{tableoflinesodd}. The associated web of conics, is denoted by $\mathcal{W}_{8,2}=(X_0X_1,X_0X_2,X_1X_2,\delta X_1^2+X_2^2)$, where $\delta$ is a non-square. In particular, singular conics in the web $\mathcal{W}_{8,2}$ correspond to points on the cubic surface 
$${\mathcal{X}}=\cZ(2ABD-C(A^2+\delta B^2)).$$ 
Similarly, as in the proof of the case $o_{8,1}$, one easily verifies that the total number of $\bF_q$-rational points on $\mathcal X$ is $q^2+2q+1$. There are no double lines in $\mathcal{W}_{8,2}$ as  the matrix 
$$
M(a,b,c,d)=\begin{bmatrix}\cdot&a&b\\a&\delta c&d\\ b&d&c\end{bmatrix},$$
has no rank one points. Let $m_{11}$, $m_{22}$, and $m_{33}$ denote the three $2\times 2$ principal minors of $M(a,b,c,d)$, then the point $(a,b,c,d)$ belongs to the orbit $\cP_{2,e}$ if and only if all of $-m_{11}$, $-m_{22}$, and $-m_{33}$ are squares and at least one of them is non-zero. Here
$-m_{11}=d^2-\delta c^2$, $-m_{22}=b^2$, and $-m_{33}=a^2$. It follows that all rank-$2$ points on $\mathcal X\cap (\cZ(A,C)\cup \cZ(B,C)) $ are exterior. Similarly, as $\frac{c^2(1+\delta b^2)^2}{4b^2}-\delta c^2$ is always a square, it follows that the $q^2-q$ rank-2 points on $\mathcal X\setminus \cZ(AB)$ are all exterior. On $\mathcal X\cap \cZ(A,B)$, exterior points on $\mathcal X$ correspond to pairs $(c,d)\in \Fq^2\setminus\{(0,0)\}$ such that $d^2-\delta c^2\in \square_q$. This gives $\frac{q+1}{2}$ exterior points and $\frac{q+1}{2}$ interior points on $\mathcal X\cap \cZ(A,B)$. Therefore,  $\mathcal{W}_{8,2}$  lacks double lines, yet includes $q^2+\frac{3q+1}{2}$ pairs of real lines, $\frac{q+1}{2}$ pairs of imaginary lines, and $q^3-q$ non-singular conics.
\end{proof}

\begin{Lemma}
The hyperplane-orbit distribution of a line in $o_{8,3}$ is $[q+1,q^2+q,0,q^3-q]$. Note that these types of lines are only defined for $q$ even.
\end{Lemma}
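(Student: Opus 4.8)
The plan is to proceed by direct computation, exactly as in the preceding four lemmas: fix the representative $\ell_{8,3}$ of $o_{8,3}$ from Table~\ref{tableoflineseven}, take its associated web to be $\mathcal{W}_{8,3}=(X_0X_1,X_0X_2,X_1^2,X_2^2)$ (any $K$-equivalent representative works equally well), and count the four types of conics $\cZ(aX_0X_1+bX_0X_2+cX_1^2+dX_2^2)$, $(a:b:c:d)\in\PG(3,q)$. The first step is $h_1$. Since $q$ is even, a ternary quadratic form over $\bF_q$ is the square of a linear form precisely when its three mixed coefficients all vanish, so a member of $\mathcal{W}_{8,3}$ is a double line iff $a=b=0$; these are the $q+1$ conics $\cZ(cX_1^2+dX_2^2)=\cZ((\sqrt c\,X_1+\sqrt d\,X_2)^2)$, $(c:d)\in\PG(1,q)$, hence $h_1(\ell_{8,3})=q+1$. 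Equivalently: $\ell_{8,3}$ meets the nucleus plane $\pi_{\mathcal N}$ in exactly one point, and since every hyperplane of $\cH_1$ contains $\pi_{\mathcal N}$, the hyperplanes of $\cH_1$ through $\ell_{8,3}$ are precisely those through the solid $\langle \pi_{\mathcal N},\ell_{8,3}\rangle$, of which there are $q+1$.

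Next I would identify the remaining singular conics via the associated cubic surface. A short computation with the discriminant formula gives $\Delta_f=-(b^2c+a^2d)$, so the singular conics of $\mathcal{W}_{8,3}$ correspond to the $\bF_q$-points of $\mathcal{X}=\cZ(B^2C+A^2D)\subset\PG(3,q)$, and I would count these by splitting into the strata $a=b=0$, $a=0\neq b$, and $a\neq 0$. In the first stratum one recovers the $q+1$ double lines found above. In the second, after normalising $b=1$ the condition forces $c=0$ and the conic factors as $X_2(X_0+dX_2)$, a pair of distinct $\bF_q$-rational lines; this gives $q$ conics. In the third, after normalising $a=1$ the condition forces $d=b^2c$ and, using $X_1^2+b^2X_2^2=(X_1+bX_2)^2$, the conic factors as $(X_1+bX_2)(X_0+cX_1+bcX_2)$, again a pair of distinct $\bF_q$-rational lines; this gives $q^2$ conics. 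Hence $\mathcal{W}_{8,3}$ contains $q+1$ double lines, $q^2+q$ pairs of real lines and no pair of conjugate imaginary lines, so $h_{2,r}(\ell_{8,3})=q^2+q$ and $h_{2,i}(\ell_{8,3})=0$; finally $h_3(\ell_{8,3})=(q^3+q^2+q+1)-(q+1)-(q^2+q)=q^3-q$, giving the claimed distribution $[q+1,q^2+q,0,q^3-q]$.

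The entire content of the proof lies in the two explicit factorisations above together with the characteristic-two identity $u^2X^2+v^2Y^2=(uX+vY)^2$: these are what force every singular, non-double member of $\mathcal{W}_{8,3}$ to split over $\bF_q$, and hence make $h_{2,i}=0$. This is precisely the point of departure from the orbit $o_{8,1}$, whose analogous family of singular conics produces $q/2$ pairs of conjugate imaginary lines (detected by the trace criterion recalled in Section~\ref{pre}); so there is no genuine obstacle here, only the need to be careful with the characteristic-two bookkeeping. As a safeguard I would also check that the chosen representative is consistent with Table~\ref{tableoflineseven} — for instance $\ell_{8,3}=\cZ(Y_1,Y_2,Y_3,Y_5)$ has exactly one point on $\cV(\bF_q)$, one point in $\pi_{\mathcal N}$ and $q-1$ points of rank $3$, i.e.\ point-orbit distribution $[1,1,0,q-1]$ — and note that the argument is uniform in the even prime power $q$, the case $q=2$ included.
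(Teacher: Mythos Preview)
Your proof is correct and follows essentially the same approach as the paper: fix the representative web $\mathcal{W}_{8,3}=(X_0X_1,X_0X_2,X_1^2,X_2^2)$, compute the discriminant condition $b^2c+a^2d=0$, split into the cases $a=b=0$, $a=0\neq b$, and $a\neq 0$, and factor the resulting forms explicitly to obtain $q+1$ double lines, $q+q^2$ pairs of real lines, and no imaginary pairs. The only differences are cosmetic---you partition the case $a=0$ into two subcases from the outset, and you add a pleasant alternative geometric argument for $h_1$ via the nucleus plane together with a consistency check of the point-orbit distribution---but the substance is identical.
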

\begin{proof}
Consider the representative $\ell_{8,3}$ of the line-orbit $o_{8,3}$ 
as described in Table \ref{tableoflineseven}. Its associated web of conics is defined by $\mathcal{W}_{8,3}=(X_0X_1,X_0X_2,X_1^2,X_2^2)$. The singular conics of $\mathcal{W}_{8,3}$ are the zero locus of $aX_0X_1+bX_0X_2+cX_1^2+dX_2^2$, satisfying $cb^2+da^2=0$. For the case when $a=0$, we find $bc=0$, leading to $q+1$ double lines and $q$ pairs of real lines defined by $cX_1^2+dX_2^2=0$ and $X_2(X_0+dX_2)=0$. When $a \neq 0$, we have $d=cb^2$, resulting in $q^2$ pairs of real lines of the form: $(X_1+bX_2)(X_0+cX_1+bcX_2)=0$. Consequently, $\mathcal{W}_{8,3}$ consists of $q+1$ double lines, $q^2+q$ pairs of real lines, and $q^3-q$ non-singular conics.
\end{proof}

\begin{Lemma}
The hyperplane-orbit distribution of a line in $o_{9}$ is $[1,q^2+q,0,q^3]$.
\end{Lemma}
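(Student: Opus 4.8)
The plan is to follow the template used in the preceding lemmas. Let $\ell_9$ be the representative of the orbit $o_9$ listed in Tables~\ref{tableoflinesodd} and~\ref{tableoflineseven}, let $\mathcal{W}_9=(f_1,f_2,f_3,f_4)$ be its associated web of conics, and write $f=Af_1+Bf_2+Cf_3+Df_4$ for the generic member. The singular conics of $\mathcal{W}_9$ are in bijection with the $\bF_q$-rational points of the associated cubic surface $\mathcal{X}=\cZ(\Delta_f)\subset\PG(3,q)$, equivalently with the $(A:B:C:D)$ for which the symmetric matrix $M(A,B,C,D)$ representing the hyperplanes through $\ell_9$ is singular. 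As in the proofs for $o_{8,1}$ and $o_{8,2}$, the count then splits into three tasks: (a)~determine $|\mathcal{X}(\bF_q)|$; (b)~among the points of $\mathcal{X}$, identify those at which $M(A,B,C,D)$ has rank $1$, i.e.\ the double lines; and (c)~split the remaining (rank-$2$) points of $\mathcal{X}$ into pairs of distinct lines and pairs of conjugate imaginary lines. Since a web of conics consists of $q^3+q^2+q+1$ conics, once the singular ones are counted the number $h_3(\ell_9)$ of non-singular conics follows by subtraction.

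For (a), I expect $\mathcal{X}$ to be a degenerate cubic surface whose set of $\bF_q$-points is a plane --- either because $\Delta_f$ is, up to a scalar, the cube of a linear form, or because $\Delta_f$ factors over $\bF_q$ as a linear form times an $\bF_q$-irreducible quadratic form whose $\bF_q$-zero locus is a line contained in that plane --- so that $|\mathcal{X}(\bF_q)|=q^2+q+1$, and hence $h_3(\ell_9)=(q^3+q^2+q+1)-(q^2+q+1)=q^3$. For (b), I would compute the common zero locus on $\mathcal{X}$ of all the $2\times 2$ minors of $M(A,B,C,D)$ and check that it consists of a single point, giving $h_1(\ell_9)=1$. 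For (c), I would invoke \cite[Lemma~3.7]{nets}: for $q$ odd a rank-$2$ point $(a,b,c,d)$ of $\mathcal{X}$ lies in $\mathcal{P}_{2,e}$ (pair of real lines) exactly when the three negated principal $2\times 2$ minors $-m_{11},-m_{22},-m_{33}$ of $M(a,b,c,d)$ are squares in $\bF_q$ with at least one of them nonzero, and in $\mathcal{P}_{2,i}$ (pair of imaginary lines) otherwise; for $q$ even I would instead decide between a pair of distinct lines and a pair of conjugate lines directly, by factoring the reduced quadratic forms via the trace criterion recalled in Section~\ref{pre}. Evaluating these conditions along $\mathcal{X}$ should show that every rank-$2$ point of $\mathcal{X}$ corresponds to a pair of distinct lines, whence $h_{2,i}(\ell_9)=0$ and $h_{2,r}(\ell_9)=(q^2+q+1)-1=q^2+q$. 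As an alternative to the cubic-surface computation, one could argue geometrically, as in the proof for $o_5$, by starting from the point-orbit distribution of $\ell_9$ in the tables and counting the hyperplanes of $\cH_1$, $\cH_{2,r}$ and $\cH_{2,i}$ through $\ell_9$ via the corresponding lines and pairs of lines of $\PG(2,q)$, together with flag-counting of the kind used in Section~\ref{squabs}.

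The hard part is (c): one must verify that the ``no imaginary pair'' phenomenon holds uniformly in $q$ --- that is, that the relevant minors (or trace values) come out correctly for both parities and independently of any non-square parameter appearing in the canonical form of $\mathcal{W}_9$. By contrast, once the component structure of $\mathcal{X}$ has been pinned down, the point count in (a) and the identification of the unique double line in (b) should be routine. Throughout, useful consistency checks are that $h_1+h_{2,r}+h_{2,i}+h_3=q^3+q^2+q+1$, that the answer is compatible with the point-orbit distribution of $\ell_9$ recorded in Tables~\ref{tableoflinesodd}--\ref{tableoflineseven}, and, for $q$ odd, with the polarity-induced correspondence between hyperplane-orbit distributions of lines and point-orbit distributions of solids.
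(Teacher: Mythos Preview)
Your overall strategy is sound, but the structural hypothesis in (a) is wrong, and the paper's route for (c) is much more direct than yours.

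For (a): with $\mathcal{W}_9=(X_0X_1,\,X_0X_2-X_1^2,\,X_1X_2,\,X_2^2)$ the discriminant is $\Delta_f=B^3+ABC-A^2D$, which is \emph{irreducible} --- it is neither the cube of a linear form nor a product of a linear and an irreducible quadratic (setting $A=0$ leaves $B^3$, so no linear factor through $\cZ(A)$; a linear factor $\alpha A+\beta B$ with $\beta\neq 0$ is ruled out by substituting $B=-(\alpha/\beta)A$). The count $|\mathcal{X}(\bF_q)|=q^2+q+1$ is nevertheless correct, but you have to get it by direct parametrisation: $A=0$ forces $B=0$ (the line $\cZ(A,B)$, $q+1$ points), while for $A=1$ the equation determines $D=B^3+BC$ uniquely from $(B,C)$ ($q^2$ points). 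So your plan survives once the degeneracy hypothesis is dropped.

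For (c), the paper does not use the minors criterion at all. It simply \emph{factors} the singular forms over $\bF_q$: for $a=0$ one has $X_2(cX_1+dX_2)$, giving one double line and $q$ pairs of real lines; for $a=1$, $d=b^3+bc$, the form equals $(X_1+bX_2)\bigl(X_0-bX_1+(b^2+c)X_2\bigr)$, two visibly distinct $\bF_q$-linear factors, hence $q^2$ further pairs of real lines. This yields $h_{2,i}=0$ immediately and uniformly in $q$, with no parity split. Your minors approach would also go through for $q$ odd (on $\mathcal{X}$ with $a=1$ one finds, in the halved-off-diagonal convention, $-m_{11}=(b^2+c/2)^2$, while $-m_{22},-m_{33}$ are squares trivially), but the explicit factorisation is both shorter and characteristic-free.
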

\begin{proof}
Consider the representative $\ell_9$ of the line-orbit $o_9$ 
as described in Tables \ref{tableoflinesodd} and \ref{tableoflineseven}. Its associated web of conics is represented by $\mathcal{W}_{9}=(X_0X_1,X_0X_2-X_1^2,X_1X_2,X_2^2)$. The singular conics of $\mathcal{W}_{9}$ are the zero locus of $aX_0X_1+b(X_0X_2-X_1^2)+cX_1X_2+dX_2^2$ with $b^3-da^2+abc=0$. In the case where $a=0$, we find $b=0$, leading to one double line and $q$ pairs of real lines defined by $X_2(cX_1+dX_2)$. If $a \neq 0$, we may assume without loss of generality that $d=b^3+bc$, yielding $q^2$ pairs of real lines of the form: $(X_1+bX_2)(X_0+cX_2-bX_1+b^2X_2)$. Therefore, $\mathcal{W}_{9}$ comprises a unique double line, $q^2+q$ pairs of real lines, and $q^3$ non-singular conics.
\end{proof}

\begin{Lemma}
The hyperplane-orbit distribution of a line in $o_{10}$ is $[1,q^2+q,q^2,q^3-q^2]$.
\end{Lemma}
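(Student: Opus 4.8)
The plan is to follow the scheme already used for $o_6$, $o_{8,1}$, $o_{8,2}$ and $o_9$. From Tables~\ref{tableoflinesodd} and~\ref{tableoflineseven} one takes a representative $\ell_{10}$ of $o_{10}$: geometrically it is an external line to a conic of $\cV(\bF_q)$ lying inside that conic's plane, and its associated web can be taken to be $\mathcal{W}_{10}=(X_0X_2,\,X_1X_2,\,X_2^2,\,g)$, where $g=X_0^2-\delta X_1^2$ with $\delta\notin\square_q$ when $q$ is odd, and $g=X_0^2+X_0X_1+\epsilon X_1^2$ with $\operatorname{Tr}(\epsilon)=1$ when $q$ is even; in both cases $g$ is irreducible over $\bF_q$. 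As before, the hyperplanes of $\PG(5,q)$ through $\ell_{10}$ are identified with the $q^3+q^2+q+1$ conics of $\mathcal{W}_{10}$, the singular ones with the $\bF_q$-rational points of the cubic surface $\mathcal{X}=\cZ(\Delta_f)\subseteq\PG(3,q)$ in coordinates $(A,B,C,D)$ dual to $(X_0X_2,X_1X_2,X_2^2,g)$, and such a point lies in $\cH_1$, $\cH_{2,r}$ or $\cH_{2,i}$ according as its conic is a double line, a pair of distinct real lines, or a pair of conjugate imaginary lines. So the whole statement reduces to enumerating $\mathcal{X}(\bF_q)$ and sorting its points by type.

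The key observation is that $\Delta_f$ factors as $D\cdot\psi$ with $\psi$ a quadratic in $(A,B,C,D)$ (for $q$ odd one finds $\psi=\delta A^2-B^2-4\delta CD$, and a similar $\psi$ for $q$ even), so that $\mathcal{X}=\cZ(D)\cup\cZ(\psi)$. On $\cZ(D)$ the conics are $\cZ(X_2(AX_0+BX_1+CX_2))$: the double line $X_2^2$ for $(A:B:C:D)=(0:0:1:0)$, and a pair of distinct real lines for each of the remaining $q^2+q$ points. This yields $h_1=1$ — in agreement with the fact that the external line $\ell_{10}$ lies in a unique conic plane of $\cV(\bF_q)$, hence in a unique hyperplane of $\cH_1$, and in no other one because $\ell_{10}$ meets $\cV(\bF_q)$ in no point — and it contributes $q^2+q$ members of $\cH_{2,r}$. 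On $\cZ(\psi)$, after normalising $D=1$, the equation $\psi=0$ solves uniquely for $C$ as a function of $(A,B)$, so $\cZ(\psi)$ contributes exactly $q^2$ further points of $\mathcal{X}$ (and $\cZ(\psi)\cap\cZ(D)$ is just the point $(0:0:1:0)$ already counted). Completing the square in $X_0$ shows that each of these $q^2$ conics is projectively equivalent to $\cZ(g)$, hence a pair of conjugate imaginary lines. Thus $h_1=1$, $h_{2,r}=q^2+q$, $h_{2,i}=q^2$, and $h_3=(q^3+q^2+q+1)-1-(q^2+q)-q^2=q^3-q^2$.

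The step I expect to be the main obstacle is pinning down the type of the conics on the $D\neq0$ branch (and checking that, apart from the one double line, the $D=0$ branch produces no imaginary pair). For $q$ odd this is the reduction to $U^2-\delta V^2$ in two independent linear forms $U,V$; if one prefers the matrix route it is \cite[Lemma~3.7]{nets} applied to the $2\times2$ principal minors of the symmetric matrix of the conic, checking that the relevant minor is a non-square precisely on this branch. For $q$ even the term $4a_{00}a_{11}a_{22}$ disappears from $\Delta_f$, so one reduces each singular member to a quadratic in one variable and uses the trace criterion recalled in Section~\ref{pre}: the condition $\operatorname{Tr}(\epsilon)=1$ is exactly what makes $T^2+T+\epsilon$, and therefore the conic, irreducible over $\bF_q$ all along the $D\neq0$ branch, while on $D=0$ the split into two $\bF_q$-rational lines is visible directly. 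Since the stated distribution is uniform in $q$ whereas these two arguments differ, both have to be run and seen to land on $[1,\,q^2+q,\,q^2,\,q^3-q^2]$.
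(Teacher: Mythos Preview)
Your proposal is correct and follows essentially the same approach as the paper: split the singular conics of $\mathcal{W}_{10}$ according to whether the coefficient of the irreducible binary form $g$ vanishes, obtaining one double line and $q^2+q$ real line-pairs on the $D=0$ branch and $q^2$ imaginary line-pairs on the $D\neq 0$ branch. Your choice of a normalised representative for $g$ (diagonal for $q$ odd, standard trace-one form for $q$ even) makes the factorisation $\Delta_f=D\cdot\psi$ and the reduction of each $D\neq 0$ conic to $\cZ(g)$ in new coordinates more explicit than the paper's terse treatment, but the strategy is the same.
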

\begin{proof}
Consider the representative $\ell_{10}$ of the line-orbit $o_{10}$ 
as described in Tables \ref{tableoflinesodd} and \ref{tableoflineseven}, with the constraint $v_0\lambda^2+uv_0\lambda-1 \neq 0$ for all $\lambda\in \Fq$. The associated web of conics, denoted as $\mathcal{W}_{10}$, is given by $\mathcal{W}_{10}=(v_0^{-1}X_0^2+uX_0X_1-X_1^2,X_0X_2,X_1X_2,X_2^2)$. The singular conics of $\mathcal{W}_{10}$ are the zero locus of
$$a(v_0^{-1}X_0^2+uX_0X_1-X_1^2)+bX_0X_2+cX_1X_2+dX_2^2$$ with $-4v^{-1}a^2d+uabc-v^{-1}ac^2+ab^2-u^2a^2d=0$. In the case where $a=0$, $\mathcal{W}_{10}$ comprises a unique double line and $q^2+q$ pairs of real lines defined by $X_2(bX_0+cX_1+dX_2)=0$. For $a \neq 0$, $\mathcal{W}_{10}$ exhibits $q^2$ pairs of imaginary lines. In summary, $\mathcal{W}_{10}$ consists of a unique double line, $q^2+q$ pairs of real lines, $q^2$ pairs of imaginary lines, and $q^3-q^2$ non-singular conics.
\end{proof}

\begin{Lemma}
The hyperplane-orbit distribution of a line in $o_{12,1}$ is $[q+2,q^2+\frac{q-1}{2}, q^2-\frac{q+1}{2}, q^3-q^2]$ for $q$ odd, and $[q^2+q+1,\frac{q^2+q}{2},\frac{q^2-q}{2},q^3-q^2]$ for $q$ even.
\end{Lemma}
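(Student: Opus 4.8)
The plan is to follow the method of the preceding lemmas. Fix the representative $\ell_{12,1}$ of the orbit $o_{12,1}$ from Tables~\ref{tableoflinesodd} and~\ref{tableoflineseven}, write the associated web as $\mathcal{W}_{12,1}=(f_1,f_2,f_3,f_4)$ with the $f_i$ read off from the table, and let $M(a,b,c,d)$ be the symmetric $3\times 3$ matrix representing the generic member $af_1+bf_2+cf_3+df_4$; the hyperplanes through $\ell_{12,1}$ are exactly the hyperplanes of $\PG(5,q)$ associated via $\delta$ to the conics of $\mathcal{W}_{12,1}$, parametrised by $(a,b,c,d)\in\PG(3,q)$. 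The singular conics of $\mathcal{W}_{12,1}$ correspond to the $\bF_q$-rational points of the associated cubic surface $\mathcal{X}=\cZ(\det M(A,B,C,D))\subset\PG(3,q)$; among these, the points for which $M$ has rank one are the double lines, and a rank-two point contributes to $h_{2,r}$ or to $h_{2,i}$ according to whether the corresponding conic splits over $\bF_q$ or not. Since a web contains $q^3+q^2+q+1$ conics, once $h_1$, $h_{2,r}$, $h_{2,i}$ are known we get $h_3=q^3+q^2+q+1-h_1-h_{2,r}-h_{2,i}$; the target values amount to showing $|\mathcal{X}(\bF_q)|=h_1+h_{2,r}+h_{2,i}=2q^2+q+1$ and $h_3=q^3-q^2$ in both parities.

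For $q$ even, the point-orbit distribution of $\ell_{12,1}$ recorded in Table~\ref{tableoflineseven} shows that $\ell_{12,1}$ lies on the nucleus plane $\pi_{\mathcal{N}}$. By the observation made in Section~\ref{squabs}, every hyperplane of $\cH_1$ contains $\pi_{\mathcal{N}}$ (in fact $\pi_{\mathcal{N}}$ is precisely their common intersection, since in characteristic two the dual coordinates of a double line $\cZ((\alpha X_0+\beta X_1+\gamma X_2)^2)$ reduce to $(\alpha^2,0,0,\beta^2,0,\gamma^2)$), so $h_1=q^2+q+1$ is immediate. It remains to sort the other $\bF_q$-points of $\mathcal{X}$ into real and imaginary line-pairs; I would do this by stratifying $\mathcal{X}$ over a coordinate hyperplane along which $\det M$ degenerates, enumerating the points of each stratum (rationally parametrising the open part), and deciding the dichotomy on each stratum with the characteristic-two criterion recalled in Section~\ref{pre} ($\alpha X^2+\beta X+\gamma$ with $\alpha\neq 0$ has two roots in $\bF_q$ iff $\beta\neq 0$ and $\operatorname{Tr}(\alpha\gamma/\beta^2)=0$). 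This should give $h_{2,r}=\tfrac{q^2+q}{2}$ and $h_{2,i}=\tfrac{q^2-q}{2}$.

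For $q$ odd, I would carry out the point count on $\mathcal{X}$ directly. First, $|\mathcal{X}(\bF_q)|=2q^2+q+1$; this value is consistent with $\mathcal{X}$ being a reducible cubic (a plane together with a quadric, possibly a cone), so I expect $\det M$ to factor, or at least $\mathcal{X}$ to meet a distinguished coordinate plane in a union of lines, reducing the count to that plane plus a rationally parametrised complement. Next, the rank-one locus is cut out by the $2\times 2$ minors of $M$ and should be an explicitly listable finite set, giving $h_1=q+2$. For the remaining rank-two points I would invoke \cite[Lemma 3.7]{nets} exactly as in the proof of the $o_{8,1}$ case: such a point lies in $\cP_{2,e}$, hence represents a pair of real lines, if and only if the three principal $2\times 2$ minors $-m_{11},-m_{22},-m_{33}$ of $M$ are all squares in $\bF_q$ with at least one nonzero, and lies in $\cP_{2,i}$, hence represents a pair of imaginary lines, otherwise. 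Tracking the square classes of these minors along each stratum of $\mathcal{X}$ then yields $h_{2,r}=q^2+\tfrac{q-1}{2}$ and $h_{2,i}=q^2-\tfrac{q+1}{2}$, whence $h_3=q^3-q^2$. As a cross-check, by the polarity of $\PG(5,q)$ exchanging conic planes and tangent planes, this hyperplane-orbit distribution must coincide with the point-orbit distribution of the solid polar to $\ell_{12,1}$ (whose representative appears in Table~\ref{solidsodd}).

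I expect the main obstacle to be the enumeration of $\mathcal{X}(\bF_q)$ together with the rank and type classification of its points. The cubic surface attached to $o_{12,1}$ is more degenerate than those appearing in the earlier lemmas — the count $2q^2+q+1$ points to a reducible cubic rather than an irreducible cubic surface — so the stratification branches into several cases, and in each case the real-versus-imaginary decision (via the square class of the $2\times 2$ minors for $q$ odd, via the trace condition for $q$ even) must be propagated consistently while the rank-one locus is separated out cleanly; keeping this bookkeeping correct, for both parities in parallel, is where the real work lies.
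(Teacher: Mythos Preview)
Your plan is correct and will work; it mirrors the template the paper itself uses for orbits such as $o_{8,1}$, $o_{13,1}$, $o_{13,2}$ (matrix $M(a,b,c,d)$, cubic surface $\mathcal{X}=\cZ(\det M)$, rank-one locus via $2\times2$ minors, real/imaginary split via \cite[Lemma 3.7]{nets} for $q$ odd and the trace criterion for $q$ even). Your structural expectation is also right: here $\det M = c(4ad-b^2)$, so $\mathcal{X}$ is the union of the plane $\cZ(C)$ and the quadric cone $\cZ(4AD-B^2)$, whence $|\mathcal{X}(\bF_q)|=2q^2+q+1$ by inclusion--exclusion. Your $q$-even shortcut ($\ell_{12,1}\subset\pi_{\mathcal N}$, hence $h_1=|\cH_1|=q^2+q+1$) is a clean way to get $h_1$ that the paper does not invoke here, though the underlying fact is established in the proof of Theorem~\ref{thm:hyporbdist_pts_even}.

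The paper's own proof for $o_{12,1}$, however, takes a more elementary route than the one you propose: because the generic form $aX_0^2+bX_0X_2+cX_1^2+dX_2^2$ is nearly diagonal, the paper simply case-splits on the vanishing of $a$, $b$, $c$ (and, for $q$ odd, completes the square when $ac\neq 0$) and reads off directly whether each binary form factors over $\bF_q$, without invoking the principal-minor criterion. This is shorter for this particular orbit; your systematic approach is heavier here but has the virtue of being uniform with the harder cases and of making the factorisation $\mathcal{X}=\cZ(C)\cup\cZ(4AD-B^2)$ (and hence the $2q^2+q+1$ count) transparent.
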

\begin{proof}
Consider the representative $\ell_{12,1}$ of the line-orbit $o_{12,1}$ 
as described in Tables \ref{tableoflinesodd} and \ref{tableoflineseven}. The associated web of conics, denoted as $\mathcal{W}_{12,1}$, is represented by $\mathcal{W}_{12,1}=(X_0^2, X_0X_2, X_1^2, X_2^2)$. The singular conics of $\mathcal{W}_{12,1}$ are the zero locus of $aX_0^2+ bX_0X_2+cX_1^2+dX_2^2$, with $4acd-cb^2=0$. 

Let $q$ be odd. If $a=c=0$, $\mathcal{W}_{12,1}$ contains a double line and $q$ pairs of real lines defined by $bX_0X_2+dX_2^2=0$. For $a=0$ and $c\neq 0$, where $b=0$, $\mathcal{W}_{12,1}$ has a double line, $\frac{q-1}{2}$ pairs of real lines, and $\frac{q-1}{2}$ pairs of imaginary lines, depending on whether $d$ is zero, a non-zero square, or a non-square respectively. If $a\neq 0$ and $c=0$, then $\mathcal{W}_{12,1}$ has $q$ double lines, $\frac{q(q-1)}{2}$ pairs of real lines, and $\frac{q(q-1)}{2}$ pairs of imaginary lines defined by $X_0^2+bX_0X_2+dX_2^2=0$. For $ac\neq 0$, assuming $c=1$ and $d=\frac{b^2}{4a}$, $\mathcal{W}_{12,1}$ exhibits $\frac{q(q-1)}{2}$ pairs of real lines and $\frac{q(q-1)}{2}$ pairs of imaginary lines defined by $aX_0^2+bX_0X_2+X_1^2+\frac{b^2}{4a}X_2^2=0$. In total, over finite fields of odd characteristic, $\mathcal{W}_{12,1}$ comprises $q+2$ double lines, $q^2+\frac{q-1}{2}$ pairs of real lines, $q^2-\frac{q+1}{2}$ pairs of imaginary lines, and $q^3-q^2$ non-singular conics. 

Assume now that $q$ is even. If $b=0$, $\mathcal{W}_{12,1}$ contains $q^2+q+1$ double lines. Otherwise, when $c=0$, the forms $aX_0^2+X_0X_2+dX_2^2$, define $\frac{q^2+q}{2}$ pairs of real lines and $\frac{q^2-q}{2}$ pairs of imaginary lines. In summary, when is $q$ is even, $\mathcal{W}_{12,1}$ comprises a total of $q^2+q+1$ double lines, $\frac{q^2+q}{2}$ pairs of real lines, $\frac{q^2-q}{2}$ pairs of imaginary lines, and $q^3-q^2$ non-singular conics.
\end{proof}

\begin{Lemma}
The hyperplane-orbit distribution of a line in $o_{12,3}$ is $[q+1,q^2+\frac{q}{2}, q^2-\frac{q}{2}, q^3-q^2]$. Note that these types of lines are only defined for $q$ even.
\end{Lemma}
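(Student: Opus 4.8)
The plan is to follow the same template used in the preceding lemmas for the orbits $o_{8,1}$, $o_{8,3}$, $o_{12,1}$: pick the representative $\ell_{12,3}$ from Table \ref{tableoflineseven}, write down the associated web $\mathcal{W}_{12,3}$ of conics in $\PG(2,q)$, and count the singular members by their type. Concretely, I would take $\mathcal{W}_{12,3}=(X_0^2, X_0X_2, \delta X_1^2+X_1X_2, X_2^2)$ (or the appropriate representative with an $\bF_q$-linearised term guaranteeing the imaginary-line count), so that a general member is $\cZ(aX_0^2+bX_0X_2+c(\delta X_1^2+X_1X_2)+dX_2^2)$ and the singular locus is cut out by the vanishing of its discriminant, an explicit cubic form in $(a,b,c,d)$. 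The key difference with $o_{12,1}$ (which for $q$ even has all $q^2+q+1$ singular conics being double lines when $b=0$) is that here one of the generators is chosen to break that degeneracy, producing the mix $q+1$ double lines, $q^2+q/2$ pairs of real lines, $q^2-q/2$ pairs of imaginary lines.

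The counting itself splits along the same case analysis as before. First I would handle the locus where the leading $X_0^2$-coefficient vanishes, i.e. $a=0$: the members become $\cZ(bX_0X_2 + c(\delta X_1^2+X_1X_2)+dX_2^2)$, which factor through a quadratic in one variable; using the characteristic-two root criterion recalled in Section \ref{pre} (one root iff the linear coefficient vanishes, two roots iff the trace condition holds, none otherwise) one reads off the split into double lines, real pairs, and imaginary pairs. Then I would treat $a\neq 0$, normalise $a=1$, solve the discriminant equation for one of the remaining parameters, and verify that the resulting one-parameter families consist entirely of pairs of real lines (as happened for $o_{8,3}$ and $o_{12,1}$, $q$ even) except for the contributions already tallied. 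Summing the contributions across all cases gives $h_1=q+1$, $h_{2,r}=q^2+\frac{q}{2}$, $h_{2,i}=q^2-\frac{q}{2}$, and then $h_3$ is forced by the total count $q^3+q^2+q+1$ of conics in a web, yielding $h_3=q^3-q^2$.

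The main obstacle I expect is purely bookkeeping: over $\bF_q$ with $q$ even one must be careful that the representative of $o_{12,3}$ actually has the point-orbit distribution recorded in Table \ref{tableoflineseven}, and that the trace-based dichotomy is applied with the correct linearised coefficient so that exactly half of the relevant pencil of reducible conics splits and half does not — an off-by-$\frac{q}{2}$ error is easy to make here. A secondary check is consistency: the hyperplane-orbit distribution must be compatible with the point-orbit distribution of $\ell_{12,3}$ (via the duality remark preceding the $o_5$ lemma, and with the general principle in Corollary \ref{aux5} that point-orbit distribution determines hyperplane-orbit distribution), and the double-line count $q+1$ should match the number of rank-$\le 1$ members, which can be verified directly from the rank-one condition on the symmetric matrix $M(a,b,c,d)$ representing the hyperplanes through $\ell_{12,3}$. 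Once these sanity checks pass, the lemma follows.
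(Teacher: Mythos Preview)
Your overall template matches the paper's: pick a representative, write the web, factor the discriminant, and count singular conics by type. Two concrete issues, however.

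First, your guessed representative $(X_0^2, X_0X_2, \delta X_1^2+X_1X_2, X_2^2)$ is not the one in Table~\ref{tableoflineseven}. The paper uses $\mathcal{W}_{12,3}=(X_0^2,X_0X_2,X_0X_1+X_1X_2+X_1^2,X_2^2)$, read off directly from the line representative $\begin{bmatrix}\cdot&x&\cdot\\x&x+y&y\\ \cdot&y&\cdot\end{bmatrix}$. Your form does happen to land in $o_{12,3}$ (its point-orbit distribution is $[0,1,q,0]$), but the parameter $\delta$ is a red herring here: over $\bF_q$ with $q$ even every element is a square, so there is no ``non-square $\delta$'' to play the role it does in the odd-characteristic orbits $o_{8,2}$, $o_{13,2}$, etc. Any nonzero $\delta$ gives an equivalent web.

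Second, and more substantively, your expectation that the $a\neq 0$ branch ``consists entirely of pairs of real lines (as happened for $o_{8,3}$ and $o_{12,1}$)'' is wrong for this orbit. With either representative the discriminant factors as $c$ times a form linear in the remaining variables, so the natural split is on $c$, not on $a$. In the paper's version one gets $\Delta=c\big((a+b+d)c+b^2\big)$; the branch $c=0$ already produces all $q+1$ double lines together with $\frac{q^2+q}{2}$ real pairs and $\frac{q^2-q}{2}$ imaginary pairs (from the binary quadratic $aX_0^2+bX_0X_2+dX_2^2$), while the branch $c=1$ forces $a=b+b^2+d$ and the conic becomes $(bX_0+X_1)^2+(X_0+X_2)(bX_0+X_1)+d(X_0+X_2)^2$, a binary quadratic in $bX_0+X_1$ and $X_0+X_2$ that splits or not according to $\operatorname{Tr}(d)$, contributing $\frac{q^2}{2}$ real and $\frac{q^2}{2}$ imaginary pairs. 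Both branches feed $h_{2,i}$; neither is ``entirely real''. Your analogy with $o_{8,3}$ fails precisely because $o_{8,3}$ has $h_{2,i}=0$, whereas here $h_{2,i}=q^2-\frac{q}{2}$. If you run your own representative carefully you will find the same phenomenon: the $c=\delta b^2$ branch reduces to a binary quadratic in $X_0+\delta bX_1$ and $X_2$, splitting according to a trace condition rather than automatically.
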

\begin{proof}
Consider the representative $\ell_{12,3}$ of the line-orbit $o_{12,3}$ 
as described in Table \ref{tableoflineseven}. The associated web of conics, denoted as $\mathcal{W}_{12,3}$, is represented by $\mathcal{W}_{12,3}=(X_0^2,X_0X_2,X_0X_1+X_1X_2+X_1^2,X_2^2)$. The singular conics of $\mathcal{W}_{12,3}$ are the zero locus of $$aX_0^2+bX_0X_2+c(X_0X_1+X_1X_2+X_1^2)+dX_2^2$$ with 
$c((a+b+d)c+b^2)=0$. For $c=0$, the quadratic $aX_0^2+bX_0X_2+dX_2^2=0$ defines $q+1$ double lines, $\frac{q^2+q}{2}$ pairs of real lines, and $\frac{q^2-q}{2}$ pairs of imaginary lines. If $c=1$, then $a=b+b^2+d$, and the forms in $\mathcal{W}_{12,3}$ reduce to 
$$(bX_0+X_1)^2+(X_0+X_2)(bX_0+X_1)+d(X_0+X_2)^2.$$ 
This yields $\frac{q^2}{2}$ pairs of real lines and $\frac{q^2}{2}$ pairs of imaginary lines. Consequently, $\mathcal{W}_{12,3}$ contains $q+1$ double lines, $q^2+\frac{q}{2}$ pairs of real lines, $q^2-\frac{q}{2}$ pairs of imaginary lines, and $q^3-q^2$ non-singular conics.
\end{proof}

\begin{Lemma}\label{w_{13,1}}
The hyperplane-orbit distribution of a line in $o_{13,1}$ is $[3,\frac{q^2+3q-2}{2},\frac{q^2+q-2}{2},q^3-q]$ for $q$ odd, and $[q+1,\frac{q^2}{2}+q,\frac{q^2}{2},q^3-q]$ for $q$ even.
\end{Lemma}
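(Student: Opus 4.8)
The plan is to reuse the method applied to $o_{8,1}$, $o_{8,2}$ and $o_{12,1}$: present the singular conics of the web $\mathcal{W}_{13,1}$ as the $\mathbb{F}_q$-points of the cubic surface $\mathcal{X}=\cZ(\Delta_f)\subset\PG(3,q)$ associated with $\ell_{13,1}$, and then sort these points according to the rank, and for rank two the exterior/interior type, of the symmetric $3\times 3$ matrix $M(a,b,c,d)$ representing the corresponding hyperplane of $\PG(5,q)$. Concretely, I would first read off the representative $\ell_{13,1}$ and the web $\mathcal{W}_{13,1}=(f_1,f_2,f_3,f_4)$ from Tables \ref{tableoflinesodd} and \ref{tableoflineseven}, write the generic member $af_1+bf_2+cf_3+df_4$, build the matrix $M(a,b,c,d)$ of the associated hyperplane, and compute $\Delta_f=\det M(a,b,c,d)$, which is the defining cubic form of $\mathcal{X}$.

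Next I would count $|\mathcal{X}(\mathbb{F}_q)|$ via a convenient stratification of $\PG(3,q)$ by the vanishing of one or two of the coordinates, chosen so that on those subspaces $\mathcal{X}$ degenerates into a union of lines (or a line together with a conic) that is directly enumerable, while on the complement $\mathcal{X}$ carries a rational parametrization accounting for the remaining $\sim q(q-1)$ points. Inside $\mathcal{X}$, the double lines are exactly the points at which $M(a,b,c,d)$ has rank one; this is a closed condition cutting out a short explicit list, which gives $h_1$. To split the remaining rank-two points, for $q$ odd I would invoke \cite[Lemma 3.7]{nets}: a rank-two point $(a,b,c,d)$ lies in $\cP_{2,e}$ (so contributes a pair of real lines) precisely when the three negated $2\times 2$ principal minors $-m_{11},-m_{22},-m_{33}$ of $M(a,b,c,d)$ are all squares in $\mathbb{F}_q$ with at least one nonzero, and otherwise lies in $\cP_{2,i}$ (a pair of imaginary lines); running this criterion through each stratum produces $h_{2,r}$ and $h_{2,i}$. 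For $q$ even I would instead reduce the residual quadratic factor of each singular form to the normal shape $\alpha X^2+\beta X+\gamma$ with $\alpha\neq0$ and apply the trace criterion recalled in Section \ref{pre} (it splits over $\mathbb{F}_q$ iff $\beta=0$ or $\operatorname{Tr}(\alpha\gamma/\beta^2)=0$), which sorts the rank-two points into $\cH_{2,r}$ and $\cH_{2,i}$. Finally $h_3=q^3+q^2+q+1-h_1-h_{2,r}-h_{2,i}$, which should simplify to $q^3-q$ in both parities; this last identity, together with the known point-orbit distribution of $\ell_{13,1}$ in the tables, serves as a consistency check on the count (in particular $|\mathcal{X}(\mathbb{F}_q)|$ should equal $(q+1)^2$).

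The step I expect to be the main obstacle is the combined point-count and exterior/interior classification on $\mathcal{X}$. Relative to the earlier orbits, the cubic surface attached to $o_{13,1}$ is less reducible, so the stratification requires more care and the square-class (respectively trace) bookkeeping must be carried through each stratum without slipping; moreover the answer genuinely depends on the parity of $q$, so the odd and even cases must be treated by separate, parallel arguments rather than one uniform computation.
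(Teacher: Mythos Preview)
Your proposal is correct and follows essentially the same approach as the paper: the paper also writes the singular conics of $\mathcal{W}_{13,1}=(X_0^2, X_0X_2, X_1^2+X_2^2, X_1X_2)$ as the cubic surface $\mathcal{X}=\cZ(A(C^2-D^2)-CB^2)$ (for $q$ odd), counts its $\mathbb{F}_q$-points via a stratification by $\cZ(B)$, $\cZ(C)$ and $\cZ(C^2-D^2)$ to obtain $(q+1)^2$, locates the three rank-one points, and then applies \cite[Lemma~3.7]{nets} on the principal minors stratum by stratum, while for $q$ even it performs the analogous case analysis with the trace criterion. Your anticipated consistency check $|\mathcal{X}(\mathbb{F}_q)|=(q+1)^2$ and the parity-dependent bookkeeping are exactly what the paper carries out.
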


\begin{proof}
Consider the representative $\ell_{13,1}$ of the line-orbit $o_{13,1}$ 
as described in Tables \ref{tableoflinesodd} and \ref{tableoflineseven}. The associated web of conics, denoted as $\mathcal{W}_{13,1}$, is given by $\mathcal{W}_{13,1}=(X_0^2, X_0X_2, X_1^2+ X_2^2,X_1X_2)$. The singular conics of $\mathcal{W}_{13,1}$ are the zero locus of $$aX_0^2+bX_0X_2+c (X_1^2+ X_2^2)+dX_1X_2 $$ with $a(4c^2-d^2)-cb^2=0$. 

Let $q$ be even. If $c=0$, then $ad=0$, leading to a double line and $2q$ pairs of real lines defined by $aX_0^2+bX_0X_2$ and $bX_0X_2+X_1X_2$. If $c\neq0$ and $d=0$, then $\mathcal{W}_{13,1}$ contains $q$ double lines defined by $aX_0^2+X_1^2+X_2^2$. Otherwise, the forms of the singular conics of $\mathcal{W}_{13,1}$ reduce to 
$$\left(\frac{b}{d}X_0+X_1\right)^2+X_2^2+dX_2\left(\frac{b}{d}X_0+X_1\right),$$ 
yielding $q\left(\frac{q}{2}-1\right)$ pairs of real lines and $q\left(\frac{q}{2}\right)$ pairs of imaginary lines depending on whether $\operatorname{Tr}(d)$ is zero or not. In summary, $\mathcal{W}_{13,1}$ has $q+1$ double lines, $\frac{q^2}{2}+q$ pairs of real lines, $\frac{q^2}{2}$ pairs of imaginary lines, and $q^3-q$ non-singular conics over finite fields of even order.

If $q$ odd, singular conics in the web $\mathcal{W}_{13,1}$ correspond to points on the cubic surface 
$${\mathcal{X}}=\cZ(A(C^2-D^2)-CB^2),$$ 
where the cubic form is obtained from the determinant of the matrix 
$$
M(a,b,c,d)=\begin{bmatrix}a&\cdot&b\\\cdot&c&d\\ b&d&c\end{bmatrix},
$$
representing the hyperplanes through $\ell_{13,1}$.
We count $\bF_q$-rational points in ${\mathcal{X}}$. First we count the points of ${\mathcal{X}}$ in $\cZ(B)$, which intersects ${\mathcal{X}}$ in the union of the line $\cZ(A,B)$ and the conic $\cZ(B,C^2-D^2)$. This gives $3q$ points on $\mathcal X$ in $\cZ(B)$.
Next we count the points of ${\mathcal{X}}$ outside the plane $\cZ(B)$. The points of ${\mathcal{X}}$ in the affine part $\cZ(C)\setminus \cZ(B)$ of the plane $\cZ(C)$ are the $2q-1$ points on the affine lines $\cZ(A,C)\setminus \cZ(B)$ and $\cZ(C,D)\setminus \cZ(B)$. Outside the union of $\cZ(B)$, $\cZ(C)$ and $\cZ(C^2-D^2)$, there is a unique point of $\mathcal X$ on each line through $(1,0,0,0)$ and a point $(0,b,c,d)$ in $\cZ(A)\setminus \cZ(BC(C^2-D^2))$. This gives another $q^2-3q+2$ points. Therefore, the total number of $\bF_q$-rational points on $\mathcal X$ is $3q+2q-1+q^2-3q+2=q^2+2q+1$. It follows that $h_3(\ell_{13,1})=q^3-q$. \par
The double lines in $\mathcal{W}_{13,1}$ correspond to points with coordinates $(a,b,c,d)$ on $\mathcal X$ such that the matrix 
$M(a,b,c,d)$ has rank one. There are three such points on $\mathcal X$, parameterized by $(0,0,1,1)$, $(0,0,1,-1)$ and $(1,0,0,0)$.

To determine $h_{2,r}(\ell_{13,1})$ and $h_{2,i}(\ell_{13,1})$, we use \cite[Lemma 3.7]{nets}. If $m_{11}$, $m_{22}$, and $m_{33}$ denote the three $2\times 2$ principal minors of $M(a,b,c,d)$, then the point $(a,b,c,d)$ parameterizes a point belonging to the orbit $\cP_{2,e}$ (corresponding to a hyperplane in $\cH_{2,r}$) if and only if all of $-m_{11}$, $-m_{22}$, and $-m_{33}$ are squares and at least one of them is non-zero. Here
$-m_{11}=d^2-c^2$, $-m_{22}=b^2-ac$, and $-m_{33}=-ac$. 
In $\mathcal{X}\cap \cZ(C^2-D^2)$, there are $2q-1$ points parameterizing hyperplanes in $\cH_{2,r}$: $q-1$ points parameterized by $(a,0,1,\pm1)$ such that $-a \in \square_q\setminus\{0\}$, and  $q$ points parameterized by $(a,1,0,0)$, $a \in \Fq$. 
In $\mathcal{X}\setminus \cZ(C^2-D^2)$, points on $\mathcal X$ are parameterized by $(\frac{b^2c}{c^2-d^2},b,c,d)$, and the expressions for $-m_{22}$, and $-m_{33}$ become
$$
-m_{22}=b^2-\frac{b^2c^2}{c^2-d^2}=\frac{-b^2d^2}{c^2-d^2} \mbox{, and } -m_{33}=\frac{-b^2c^2}{c^2-d^2}.
$$
So such a point corresponds to a hyperplane in $\cH_{2,r}$ if and only if $d^2-c^2$ is a nonzero square. The number of such hyperplanes is therefore equal to the cardinality of the set 
$$\{(b,c,d)~:~b\in \bF_q, (c,d)\in \PG(1,q), d^2-c^2\in \square_q\setminus \{0\}\},$$
which is equal to $q(q-1)/2$. In total this gives 
$$h_{2,r}(\ell_{13,1})=2q-1+\frac{q(q-1)}{2}=\frac{q^2+3q-2}{2}.
$$
The above implies that 
$h_{2,i}(\ell_{13,1})=q^2+2q+1-3-\frac{q^2+3q-2}{2}=\frac{q^2+q-2}{2}$.
\end{proof}

\begin{Lemma}
The hyperplane-orbit distribution of a line in $o_{13,2}$ is $[1,\frac{q^2+3q}{2},\frac{q^2+q}{2},q^3-q]$. Note that these types of lines are only defined for $q$ odd. 
\end{Lemma}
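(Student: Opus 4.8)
I would follow the template of Lemma~\ref{w_{13,1}}, using the correspondence $\delta$ to translate the hyperplane-orbit distribution of $\ell_{13,2}$ into a count, by type, of the singular conics in the associated web. Take the representative from Table~\ref{tableoflinesodd}: $\mathcal{W}_{13,2}$ is the non-square twist of $\mathcal{W}_{13,1}$, replacing the generator $X_1^2+X_2^2$ by $X_1^2+\delta X_2^2$ with $\delta\notin\square_q$. For $f=af_1+bf_2+cf_3+df_4$, the generic hyperplane through $\ell_{13,2}$ is represented by a symmetric matrix $M(a,b,c,d)$ whose determinant, up to a harmless rescaling, is $\Delta_f$, so the associated cubic surface $\mathcal{X}=\cZ(\Delta_f)\subseteq\PG(3,q)$ has $\bF_q$-points parameterising exactly the singular conics of $\mathcal{W}_{13,2}$, and $h_3(\ell_{13,2})=q^3+q^2+q+1-|\mathcal{X}(\bF_q)|$.

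First I would compute $|\mathcal{X}(\bF_q)|$ by stratifying $\PG(3,q)$ along a flag of coordinate hyperplanes adapted to $\mathcal{X}$, reducing $\mathcal{X}$ on each stratum to a conic, a union of concurrent lines, or an affine line, as in Lemma~\ref{w_{13,1}}. The only structural change is that the binary form $X_1^2+\delta X_2^2$ is now anisotropic, so the quadric playing the role of $\cZ(C^2-D^2)$ has its $\bF_q$-locus collapsed to a single line; this shifts the per-stratum counts but leaves the total invariant. I expect $|\mathcal{X}(\bF_q)|=q^2+2q+1$, whence $h_3(\ell_{13,2})=q^3-q$.

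Next I would pin down $h_1$ and split the rank-two points. The double lines are the points of $\mathcal{X}$ where $M(a,b,c,d)$ has rank one; in the $o_{13,1}$ case two of the three such points lie on the line $\cZ(A,B)$, and rank one there would force a binary quadratic in $X_1,X_2$ to be a perfect square, which for $\mathcal{W}_{13,2}$ would make $\delta$ a square, a contradiction. So only the rank-one point on $\cZ(C,D)$ survives and $h_1(\ell_{13,2})=1$. For the rank-two points I would invoke \cite[Lemma 3.7]{nets}: $(a,b,c,d)\in\mathcal{X}$ corresponds to a hyperplane in $\cH_{2,r}$ (equivalently parameterises a point of $\cP_{2,e}$) exactly when the three $2\times2$ principal minors $-m_{11},-m_{22},-m_{33}$ of $M$ are all squares in $\bF_q$ with at least one non-zero, and to a hyperplane in $\cH_{2,i}$ otherwise. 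On each stratum these should reduce to the single condition that $d^2-\delta c^2$ be a square (note $\delta c^2-d^2\neq0$ over $\bF_q$), which depends only on $(c:d)\in\PG(1,q)$ and holds for exactly $\tfrac{q+1}{2}$ of the $q+1$ classes. Summing the exterior contributions --- $\tfrac{q+1}{2}$ on $\cZ(A,B)$, the $q$ rank-two points on $\cZ(C,D)$, and $\tfrac{q-1}{2}(q+1)$ on $\mathcal{X}\setminus(\cZ(A,B)\cup\cZ(C,D))$ --- gives $h_{2,r}(\ell_{13,2})=\tfrac{q^2+3q}{2}$, hence $h_{2,i}(\ell_{13,2})=q^2+2q-\tfrac{q^2+3q}{2}=\tfrac{q^2+q}{2}$, and $h_1+h_{2,r}+h_{2,i}+h_3=q^3+q^2+q+1$ is a consistency check.

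The hard part is the bookkeeping in these last steps: carefully handling the degeneration of the anisotropic quadric $\cZ(\delta C^2-D^2)$ while enumerating strata, checking that all three principal-minor square-conditions genuinely reduce to the single condition on $d^2-\delta c^2$ on each stratum, and verifying that the square/non-square balance on $\cZ(A,B)$ flips relative to $\mathcal{W}_{13,1}$ (giving $\tfrac{q+1}{2}$ line-pairs of each kind and no double line, rather than $\tfrac{q-1}{2}$ of each kind plus two double lines). A minor point used implicitly in the flag counts is that no rank-two point of $\mathcal{X}$ of a given type lies in two distinct conic planes; this is the same incidence fact used throughout the preceding lemmas and carries over unchanged.
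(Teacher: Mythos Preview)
Your proposal is correct and follows essentially the same line as the paper's proof: pass to the cubic surface $\mathcal{X}$ associated with $\mathcal{W}_{13,2}$, count its $\bF_q$-points to get $h_3=q^3-q$, observe that the anisotropy of the twisted binary form kills two of the three rank-one points so $h_1=1$, and then separate $\cH_{2,r}$ from $\cH_{2,i}$ via the principal-minor criterion of \cite[Lemma~3.7]{nets}, which reduces on the relevant strata to whether $d^2-\delta c^2$ is a square. The only cosmetic differences are that the paper places the non-square on $X_1^2$ rather than $X_2^2$ and stratifies $\mathcal{X}$ as $\cZ(C,D)$ versus its complement (your $\cZ(A,B)$ is absorbed into the latter via $b=0$), but the arithmetic and the final counts coincide.
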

\begin{proof}

Consider the representative $\ell_{13,2}$ of the line-orbit $o_{13,2}$ 
as described in Table \ref{tableoflinesodd}.  The web of conics associated with $\ell_{13,2}$ is represented by $\mathcal{W}_{13,2}=(X_0^2, X_0X_2, \delta X_1^2+X_2^2,X_1X_2)$, where $\delta$ is a non-square. The proof is very similar to the proof for $o_{13,1}$, $q$ odd. Singular conics in the web $\mathcal{W}_{13,2}$ correspond to points on the cubic surface 
$${\mathcal{X}}=\cZ(A(\delta C^2-D^2)-\delta CB^2),$$ 
where the cubic form is obtained from the determinant of the matrix 
$$
M(a,b,c,d)=\begin{bmatrix}a&\cdot&b\\\cdot&\delta c&d\\ b&d&c\end{bmatrix},
$$
representing the hyperplanes through $\ell_{13,2}$. As in the proof for $o_{13,1}$, $q$ odd, computing the total number of $\bF_q$-rational points on $\mathcal X$ one obtains $q+2+2q-1+q^2-q=q^2+2q+1$. It follows that $h_3(\ell_{13,2})=q^3-q$. \par
The double lines in $\mathcal{W}_{13,2}$ correspond to points with coordinates $(a,b,c,d)$ on $\mathcal X$ such that the matrix 
$M(a,b,c,d)$ has rank one. This gives a unique such point on $\mathcal X$ parameterized by $(1,0,0,0)$, since $\delta $ is a nonsquare.

To determine $h_{2,r}(\ell_{13,2})$ and $h_{2,i}(\ell_{13,2})$, we use \cite[Lemma 3.7]{nets}. If $m_{11}$, $m_{22}$, and $m_{33}$ denote the three $2\times 2$ principal minors of $M(a,b,c,d)$, then the point $(a,b,c,d)$ parameterizes a point belonging to the orbit $\cP_{2,e}$ (corresponding to a hyperplane in $\cH_{2,r}$) if and only if all of $-m_{11}$, $-m_{22}$, and $-m_{33}$ are squares and at least one of them is non-zero. Here
$-m_{11}=d^2-\delta c^2$, $-m_{22}=b^2-ac$, and $-m_{33}=-\delta ac$. 
In $\mathcal{X}\cap \cZ(\delta C^2-D^2)=\cZ(C,D)$, all points on the line $\cZ(C,D)$ except $(1,0,0,0)$ parameterize hyperplanes in $\cH_{2,r}$.
In $\mathcal{X}\setminus \cZ(\delta C^2-D^2)$, points on $\mathcal X$ are parameterized by $(\frac{b^2c}{c^2-d^2},b,c,d)$, and the expressions for $-m_{22}$, and $-m_{33}$ become
$$
-m_{22}=b^2-\frac{\delta b^2c^2}{\delta c^2-d^2}=\frac{-b^2d^2}{\delta c^2-d^2} \mbox{, and } -m_{33}=\frac{-\delta^2 b^2c^2}{\delta c^2-d^2}.
$$
So such a point corresponds to a hyperplane in $\cH_{2,r}$ if and only if $d^2-\delta c^2$ is a nonzero square. The number of such hyperplanes is therefore equal to the cardinality of the set 
$$\{(b,c,d)~:~b\in \bF_q, (c,d)\in \PG(1,q), d^2-\delta c^2\in \square_q\setminus \{0\}\},$$
which is equal to $q(q+1)/2$. In total this gives 
$$h_{2,r}(\ell_{13,2})=q+\frac{q(q+1)}{2}=\frac{q^2+3q}{2}.
$$
The above implies that 
$h_{2,i}(\ell_{13,2})=q^2+2q+1-1-\frac{q^2+3q}{2}=\frac{q^2+q}{2}$.
\end{proof}

\begin{Lemma}
The hyperplane-orbit distribution of a line in $o_{13,3}$ is $[1,\frac{q^2+3q}{2},\frac{q^2+q}{2},q^3-q]$. Note that these types of lines are only defined for $q$ even.
\end{Lemma}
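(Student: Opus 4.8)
The plan is to follow the template that produced the distributions for the sister orbits $o_{13,1}$ and $o_{13,2}$. Starting from the representative $\ell_{13,3}$ of Table~\ref{tableoflineseven} and its associated web $\mathcal{W}_{13,3}=(f_1,f_2,f_3,f_4)$ — the characteristic-$2$ counterpart of $\mathcal{W}_{13,2}$, obtained by replacing the anisotropic binary form $\delta X_1^2+X_2^2$ by an irreducible binary quadratic over $\bF_q$ — the singular conics of $\mathcal{W}_{13,3}$ are exactly the $\bF_q$-rational points of the associated cubic surface $\mathcal{X}=\cZ(\Delta_f)\subset\PG(3,q)$, where $\Delta_f$ is the discriminant of $f=Af_1+Bf_2+Cf_3+Df_4$. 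Writing $\Delta_f$ out in characteristic $2$ (only the terms $a_{01}a_{02}a_{12}$, $a_{00}a_{12}^2$, $a_{11}a_{02}^2$ and $a_{22}a_{01}^2$ survive) gives an explicit cubic in $A,B,C,D$, and the task is then to enumerate the points of $\mathcal{X}$ sorted by the type of conic they parametrise.

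First I would count $|\mathcal{X}(\bF_q)|$, exactly as in the proofs for $o_{13,1}$ and $o_{13,2}$: slice $\mathcal{X}$ by a convenient coordinate hyperplane, on which the cubic degenerates into a line-plus-conic or a small set of concurrent lines, count the points there, and then count the points of $\mathcal{X}$ off that hyperplane using that each line through the double point $(1,0,0,0)$ of $\mathcal{X}$ meets $\mathcal{X}$ in exactly one further point. I expect $|\mathcal{X}(\bF_q)|=q^2+2q+1$, whence $h_3(\ell_{13,3})=q^3+q^2+q+1-(q^2+2q+1)=q^3-q$. Next, the double lines in $\mathcal{W}_{13,3}$ are its perfect-square members; since in characteristic $2$ a ternary quadratic form is a perfect square exactly when its three cross-term coefficients vanish, and $\mathcal{W}_{13,3}$ is arranged so that no nonzero combination of $f_2,f_3,f_4$ is a perfect square (this is where the irreducibility of the binary form enters, and where $o_{13,3}$ parts company with $o_{13,1}$ for $q$ even, whose extra double lines all come from $X_1^2+X_2^2=(X_1+X_2)^2$), the only perfect square in the web is $f_1=X_0^2$, so $h_1(\ell_{13,3})=1$.

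It remains to split the $q^2+2q$ rank-$2$ points of $\mathcal{X}$ into pairs of real lines and pairs of conjugate imaginary lines. For $q$ even this is governed not by the exterior/interior criterion of \cite[Lemma 3.7]{nets} but by the trace rule recalled in Section~\ref{pre}: after eliminating one variable, each rank-$2$ member becomes a binary quadratic $\alpha X^2+\beta XY+\gamma Y^2$ with $\beta\neq 0$, which splits over $\bF_q$ if and only if $\operatorname{Tr}(\alpha\gamma/\beta^2)=0$. I would stratify $\mathcal{X}$ along coordinate hyperplanes, as in the $o_{13,1}$, $q$ even argument, so that on each stratum the reduced binary quadratic depends on a single parameter $t$; then counting real versus imaginary pairs reduces to counting the $t$ with $\operatorname{Tr}(t)=0$ versus $\operatorname{Tr}(t)=1$, and since $\ker(\operatorname{Tr}\colon\bF_q\to\bF_2)$ has size $q/2$, the tallies should give $h_{2,r}(\ell_{13,3})=\tfrac{q^2+3q}{2}$, after which $h_{2,i}(\ell_{13,3})=q^3+q^2+q+1-h_1-h_{2,r}-h_3=\tfrac{q^2+q}{2}$. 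The main obstacle is this last step: one must choose the stratification so that every rank-$2$ point lies in exactly one stratum carrying a single clean trace condition, and carefully account for the boundary points — where $\beta=0$, where the matrix drops to rank one, or where two strata would otherwise overlap. It is the exact enumeration of trace-zero parameter values on each piece, rather than any conceptual difficulty, that carries the proof; the outcome is forced to coincide with the distribution of $o_{13,2}$, as the later Corollary~\ref{aux5} leads one to expect.
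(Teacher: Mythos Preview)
Your proposal is correct and follows essentially the same approach as the paper: compute the discriminant cubic $\mathcal{X}=\cZ(AD^2+CB^2+C^3+BCD)$, stratify along coordinate hyperplanes (the paper uses $C=0$, then $C\neq 0$ with $D=0$, then $C=D=1$), and in each stratum reduce the rank-$2$ conics to binary quadratics whose splitting is governed by a single trace value. The only organisational difference is that the paper does not separately count $|\mathcal{X}(\bF_q)|$ first via lines through the double point $(1,0,0,0)$; instead it directly types the conics on each stratum, obtaining the counts $1+2q$, then $\tfrac{q}{2}+\tfrac{q}{2}$, then $q(\tfrac{q}{2}-1)+q\cdot\tfrac{q}{2}$, from which all four entries of $OD_4$ fall out simultaneously.
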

\begin{proof}
Consider the representative $\ell_{13,3}$ of the line-orbit $o_{13,3}$ 
as described in Table \ref{tableoflineseven}. The associated web of conics, denoted as $\mathcal{W}_{13,3}$, is defined by $\mathcal{W}_{13,3}=(X_0^2, X_0X_2, X_0X_1+X_1^2+ X_2^2,X_1X_2)$. The singular conics in $\mathcal{W}_{13,3}$ are the zero locus of 
$$aX_0^2+b X_0X_2+c( X_0X_1+X_1^2+ X_2^2)+dX_1X_2$$ with 
$ad^2+cb^2+c^3+bcd=0$. If $c=0$, a discussion similar to that in Lemma \ref{w_{13,1}} reveals a double line and $2q$ pairs of real lines. When $c\neq 0$ and $d=0$, $\mathcal{W}_{13,3}$ contains $\frac{q}{2}$ pairs of real lines and $\frac{q}{2}$ pairs of imaginary lines defined by $aX_0^2+X_0(X_1+X_2)+(X_1+X_2)^2$. Otherwise, assuming without loss of generality that $c=1$ and $a=\frac{b^2+bd+1}{d^2}$, $\mathcal{W}_{13,3}$ comprises $q(\frac{q}{2}-1)$ pairs of real lines and $q(\frac{q}{2})$ pairs of imaginary lines represented by $(d^{-1}X_0+X_2)^2+d^{-2}(bX_0+dX_1)^2+(d^{-1}X_0+X_2)(bX_0+dX_1)$. In total, $\mathcal{W}_{13,3}$ contains a distinctive double line, $\frac{q^2+3q}{2}$ pairs of real lines, $\frac{q^2+q}{2}$ pairs of imaginary lines, and $q^3-q$ non-singular conics.
\end{proof}

\begin{Remark}\label{o_141}
For $q$ odd, the $K$-orbit \( o_{14,1} \) correspond to the \( K \)-orbit \( \Omega_{14,1} \) which contains four rank-$1$ points, parameterized by $(1,1,1,1)$, $(1,1,-1,-1)$, $(1,-1,-1,1)$, and $(1,-1,1,-1)$. Their pre-images under \( \nu \) compose the frame
$\{(1,1,1),(1,1,-1), (1,-1,1),(-1,1,1)\}$.
It follows that an alternative representative for $\Omega_{14,1}$ is given by the solid
\[
M(a,b,c,d)=
\begin{bmatrix}
a & d & d \\
d & b & d \\
d & d & c \\
\end{bmatrix}
\]
which is the span of the images under \( \nu \) of the standard frame in \( \PG(2,q) \).
\end{Remark}

\begin{Lemma}
The hyperplane-orbit distribution of a line in $o_{14,1}$ is $[4,\frac{q^2-1}{2}+2q-1,\frac{q^2-1}{2}+q-1,q^3-2q]$ for $q$ odd, and $[1,\frac{q^2}{2}+2q,\frac{q^2}{2}+q,q^3-2q]$ for $q$ even.
\end{Lemma}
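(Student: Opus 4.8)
The plan is to follow the same template used in the preceding lemmas for the orbits $o_{8,1}$, $o_{13,1}$, $o_{13,2}$: take the explicit representative $\ell_{14,1}$ of $o_{14,1}$ from Tables \ref{tableoflinesodd} and \ref{tableoflineseven}, write down the associated web $\mathcal{W}_{14,1}$ and the cubic form $\Delta$ (the determinant of the symmetric matrix $M(a,b,c,d)$ representing the hyperplanes through $\ell_{14,1}$), and then count, according to type, the $\bF_q$-rational points of the associated cubic surface $\mathcal{X}=\cZ(\Delta)$ in $\PG(3,q)$. By the $\delta$-correspondence, each double line, pair of real lines, pair of imaginary lines, and non-singular conic in $\mathcal{W}_{14,1}$ corresponds to a hyperplane in $\cH_1$, $\cH_{2,r}$, $\cH_{2,i}$, $\cH_3$ respectively, so $h_3=q^4+q^3+q^2+q+1 - |\mathcal{X}(\bF_q)|$ and the remaining job is to split $|\mathcal{X}(\bF_q)|$ into the three singular types.

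For $q$ odd I would use the alternative representative from Remark \ref{o_141}, namely the frame span $M(a,b,c,d)$ with off-diagonal entries all equal to $d$ and diagonal $(a,b,c)$; then $\Delta = abc - d^2(a+b+c) + 2d^3$, and the four rank-$1$ points $(1,1,1,1)$, $(1,1,-1,-1)$, $(1,-1,-1,1)$, $(1,-1,1,-1)$ give the $h_1=4$ double lines. The key counting step is to stratify $\mathcal{X}$: first intersect with the plane $\cZ(D)$, where $\Delta$ reduces to $abc=0$, a triangle of lines contributing $3q$ points; then on the affine part $D\neq 0$ normalise $d=1$ and solve $abc - (a+b+c) + 2 = 0$ for, say, $c$ in terms of $a,b$, which is possible except on the locus $ab=1$, giving a controlled affine count. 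The rank-$2$ points are then separated into exterior (class $\cP_{2,e}$, hyperplanes of $\cH_{2,r}$) and interior (class $\cP_{2,i}$, hyperplanes of $\cH_{2,i}$) using \cite[Lemma 3.7]{nets}: the point $(a,b,c,d)$ is exterior iff the three negated $2\times2$ principal minors $-m_{11}=d^2-bc$, $-m_{22}=d^2-ac$, $-m_{33}=d^2-ab$ are all squares with at least one nonzero. One expects these sign conditions to organise themselves cleanly on each stratum (on the triangle $\cZ(D)$ two of the three minors vanish and the third is a product, and on the affine part the substituted expressions should, after simplification, reduce to a single quadratic character condition), yielding $h_{2,r}=\frac{q^2-1}{2}+2q-1$ and $h_{2,i}=\frac{q^2-1}{2}+q-1$, with $h_3=q^3-2q$ as a consistency check.

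For $q$ even I would instead argue directly from the explicit $\mathcal{W}_{14,1}$ in Table \ref{tableoflineseven}, writing the singular-conic condition as a polynomial identity in $(a,b,c,d)$ and splitting into the cases where the leading variable vanishes (recovering, via the trace criterion recalled in the preliminaries for quadratics $\alpha X^2+\beta X+\gamma$ over $\bF_q$, $q$ even, the reduced forms and their $\operatorname{Tr}$-governed splitting into pairs of real vs.\ imaginary lines) and the generic case where one solves for that variable and counts $q^2$ hyperplanes of a single type --- exactly as in the $o_{8,1}$, $o_{13,1}$, $o_{13,3}$ proofs. The nucleus plane forces the single double line $h_1=1$. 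The main obstacle, in both parities, is organising the case analysis so that the exterior/interior (resp.\ real/imaginary) split is transparent rather than a brute-force tabulation: on the stratum where the principal minors degenerate one must be careful not to double-count points lying on more than one component of $\mathcal{X}$ (the frame gives a genuinely reducible cubic surface, so several components meet), and the $\operatorname{Tr}$-conditions in the even case must be checked to be independent of the parameter being eliminated. Once the strata are correctly disjointified, the arithmetic is routine and the four totals are pinned down by the requirement that they sum to $q^4+q^3+q^2+q+1$.
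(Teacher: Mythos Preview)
Your plan is correct and is essentially the paper's own approach: for $q$ odd the paper uses the frame representative of Remark~\ref{o_141}, stratifies the cubic surface first by $\cZ(D)$ and then by $\cZ(BC-D^2)$ (a minor variant of your ``solve for $c$ off the locus $ab=1$''), and applies the principal-minor criterion from \cite[Lemma~3.7]{nets}; for $q$ even it does exactly the direct case split with the trace test that you describe. Two small slips to fix before you carry it out: the total number of hyperplanes through a \emph{line} is $q^3+q^2+q+1$, not $q^4+q^3+q^2+q+1$, so $h_3=(q^3+q^2+q+1)-|\mathcal{X}(\bF_q)|$; and the cubic $abc-d^2(a+b+c)+2d^3$ is not reducible (it is a Cayley-type nodal cubic with exactly the four frame points as nodes), so your caution about ``components meeting'' is unnecessary, though care with the stratification is still warranted.
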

\begin{proof}
Consider the representative $\ell_{14,1}$ of the line-orbit $o_{14,1}$ 
as described in Tables \ref{tableoflinesodd} and \ref{tableoflineseven}. The associated web of conics, denoted as $\mathcal{W}_{14,1}$, is represented by $\mathcal{W}_{14,1}=(X_0X_1, X_0X_2, X_0^2+X_1^2+ X_2^2,X_1X_2)$. The singular conics in $\mathcal{W}_{14,1}$ are the zero locus of 
$$aX_0X_1+b X_0X_2+c (X_0^2+X_1^2+ X_2^2)+dX_1X_2$$ with 
$abd-c(a^2+b^2+d^2-4c^2)=0.$ 

Suppose $q$ is even. If $a=0$, then $c(b+d)=0$ and $\mathcal{W}_{14,1}$ contains the unique double line $\cZ((X_0+X_1+ X_2)^2)$, along with $\frac{3q}{2}$ pairs of real lines and $\frac{q}{2}$ pairs of imaginary lines defined by $X_2(bX_0+dX_1)$ and $c(X_0+X_1)^2+bX_2(X_0+X_1)+cX_2^2$. Otherwise, when $a \neq 0$, we distinguish between the cases where $d=1+b$ and $d\neq 1+b$. In the first case, $\mathcal{W}_{14,1}$ comprises $q$ pairs of real lines and $q$ pairs of imaginary lines defined by $c(X_0+X_2)^2+X_1(X_0+X_2)+cX_1^2$ and $c(X_1+X_2)^2+X_0(X_1+X_2)+cX_0^2$. In the latter case, we have $c=\frac{bd}{(1+b+d)^2}$, and similar to $\mathcal{W}_{10}$, the resulting quadratic defines $q(\frac{q-1}{2})$ pairs of real lines and $q(\frac{q-1}{2})$ pairs of imaginary lines. Therefore, $\mathcal{W}_{14,1}$ has a unique double line, $\frac{q^2}{2}+2q$ pairs of real lines, $\frac{q^2}{2}+q$ pairs of imaginary lines, and $q^3-2q$ non-singular conics over finite fields of even order. 

For $q$ odd, the double lines in $\mathcal{W}_{14,1}$ correspond to the four points with coordinates $(a,b,c,d)$ such that the matrix $M(a,b,c,d)$, as in Remark \ref{o_141}, has rank one.
 Pairs of lines in $\mathcal{W}_{14,1}$ correspond to rank-$2$ points $(a,b,c,d)$ on the cubic surface 
$$\cZ(A(BC-D^2)+D^2(2D-C-B)).$$ 
In particular, pairs of real lines correspond to exterior rank-$2$ points characterized by having $-m_{11}=d^2-bc$, $-m_{22}=d^2-ac$ and $-m_{33}=d^2-ab$ squares, with at least one nonzero.

The cubic surface  $\mathcal{X}$ intersects the plane $\cZ(D)$ in $3q-3$ rank-$2$ points parameterized by $(0,1,c,0)$, $(1,0,c,0)$ and $(1,b,0,0)$; $bc\neq 0$. In particular, exterior points in $\mathcal{X}\cap\cZ(D)$ are characterized by $-c\in \square_q\setminus\{0\}$ and $-b\in \square_q\setminus\{0\}$. This gives $3(\frac{q-1}{2})$ exterior and $3(\frac{q-1}{2})$ interior points in $\mathcal{X}\cap\cZ(D)$. In $\mathcal{X}\setminus \cZ(D)$, we start by counting points on $\cZ(BC-D^2)$. This gives $q-1$ rank-$2$ points parameterized by $(a,d,d,d)$; $d \neq 0$, with $\frac{q-1}{2}$ exterior and $\frac{q-1}{2}$ interior points characterized by having $d-a \in\square_q\setminus\{0\}$ and $d-a\not\in \square_q $, respectively. 
In $\mathcal{X}\setminus \cZ(D(BC-D^2))$, points on $\mathcal{X}$ are parameterized by $(\frac{d^2(2d-c-b)}{d^2-bc},b,c,d)$, and the expressions for $-m_{22}$, and $-m_{33}$ become
$$
-m_{22}=d^2-\frac{cd^2(2d-c-b)}{d^2-bc}=\frac{(d^2-c)^2}{d^2-bc} \mbox{, and } -m_{33}=d^2-\frac{bd^2(2d-c-b)}{d^2-bc}=\frac{(d^2-bd)^2}{d^2-bc} .
$$
So such a point corresponds to a hyperplane in $\cH_{2,r}$ if and only if $d^2- bc$ is a nonzero square. The number of such hyperplanes is therefore equal to the cardinality of the set 
$$\{(b,c,d)~:~d\in \bF_q\setminus\{0\}, (b,c)\in \PG(1,q), d^2-bc\in \square_q\setminus \{0\}\},$$
which is equal to $(q-1)(q+1)/2$. In total this gives 
$$h_{2,r}(\ell_{14,1})=2q-2+\frac{(q-1)(q+1)}{2}=\frac{q^2+4q-3}{2}.
$$
The above implies that 
$h_{2,i}(\ell_{14,1})=2q-2+q^2-q-\frac{(q^2-1)}{2}=\frac{q^2+2q-3}{2}$.
\end{proof}

\begin{Lemma}
The hyperplane-orbit distribution of a line in $o_{14,2}$ is $[0,\frac{q^2+1}{2}+2q,\frac{q^2+1}{2}+q,q^3-2q]$. Note that these types of lines are only defined for $q$ odd.
\end{Lemma}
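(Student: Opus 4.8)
The plan is to mirror the argument already used for $o_{14,1}$ with $q$ odd, tracking how the non-square appearing in the representative of $o_{14,2}$ changes the counts. First I would take the representative $\ell_{14,2}$ from Table~\ref{tableoflinesodd} and form the symmetric matrix $M(a,b,c,d)$ whose projectivisation runs over the hyperplanes through $\ell_{14,2}$, together with the associated cubic surface $\mathcal{X}=\cZ(\det M(a,b,c,d))\subset\PG(3,q)$, whose $\bF_q$-points parametrise the singular conics of the associated web $\mathcal{W}_{14,2}$. As in all previous cases, $h_3(\ell_{14,2})=q^3+q^2+q+1-|\mathcal{X}(\bF_q)|$; the double lines are the rank-$1$ points of $\mathcal{X}$, so $h_1(\ell_{14,2})$ is their number; and by \cite[Lemma 3.7]{nets}, a rank-$2$ point $(a,b,c,d)$ of $\mathcal{X}$ lies in $\cP_{2,e}$ (equivalently, yields a hyperplane of $\cH_{2,r}$) precisely when the negatives $-m_{11},-m_{22},-m_{33}$ of the three $2\times 2$ principal minors of $M(a,b,c,d)$ are all squares in $\bF_q$, at least one of them nonzero; the remaining rank-$2$ points contribute to $h_{2,i}(\ell_{14,2})$.

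Concretely I would proceed in three steps. Step 1: count $|\mathcal{X}(\bF_q)|$ by the same decomposition as in the $o_{14,1}$ proof, namely intersect $\mathcal{X}$ with the distinguished coordinate plane on which $\det M$ degenerates, count that plane section, and parametrise the rest of $\mathcal{X}$ by solving the resulting linear equation for the leftover coordinate; the only new feature is that the non-square makes part of the plane section a non-split quadratic rather than a pair of lines, and carrying this through gives $|\mathcal{X}(\bF_q)|=q^2+3q+1$, hence $h_3(\ell_{14,2})=q^3-2q$. Step 2: the rank-$1$ locus. The rank-one condition on $M(a,b,c,d)$ forces an equation of the shape ``square $=$ non-square multiple of a square'', which has no nonzero $\bF_q$-solution, so $h_1(\ell_{14,2})=0$ (the four rational double lines of $\mathcal{W}_{14,1}$ are replaced, under the twist, by two pairs of real and two pairs of imaginary lines). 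Step 3: split the rank-$2$ points of $\mathcal{X}$. On the main affine piece of $\mathcal{X}$ the minors $-m_{22}$ and $-m_{33}$ become perfect squares divided by a common factor, so membership in $\cP_{2,e}$ reduces to that factor being a nonzero square, and the count is a $\PG(1,q)$-level sum; on the plane section the exterior points are read off from the relevant non-split quadratic, which is exactly where the ``$+1$'' shifts relative to $o_{14,1}$ come from. Adding up gives $h_{2,r}(\ell_{14,2})=\tfrac{q^2+1}{2}+2q$, and then $h_{2,i}(\ell_{14,2})=q^3+q^2+q+1-h_1-h_{2,r}-h_3=\tfrac{q^2+1}{2}+q$.

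The main obstacle is the bookkeeping in Steps 1 and 3: pinning down the exact number of $\bF_q$-points of the plane section $\mathcal{X}\cap\cZ(\cdot)$, and correctly determining, on each of the two pieces of $\mathcal{X}$, which of $-m_{11},-m_{22},-m_{33}$ are squares and which vanish. Conceptually nothing new happens beyond the $o_{14,1}$ computation; the single structural difference is that a split conic (resp.\ a pair of lines) in the degeneracy locus of $\det M$ is replaced by a non-split one, and this is precisely what turns the $o_{14,1}$ counts $4$ and $\tfrac{q^2-1}{2}$ into the counts $0$ and $\tfrac{q^2+1}{2}$ appearing here.
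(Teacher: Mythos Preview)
Your overall algebraic strategy (compute the cubic surface $\mathcal X=\cZ(\det M)$ in the solid $\Omega_{14,2}$, count its $\bF_q$-points to get $h_3$, and split the rank-$2$ points via the principal-minor criterion) is sound, and is indeed how the paper handles $o_{14,1}$. However, the specific tactical step you rely on --- ``parametrise the rest of $\mathcal X$ by solving the resulting \emph{linear} equation for the leftover coordinate'' --- does not carry over. In the $o_{14,1}$ case the paper could project from a rational rank-$1$ point (Remark~\ref{o_141}), which is exactly what makes the cubic linear in one coordinate and makes the minors $-m_{22},-m_{33}$ collapse to ``perfect squares over a common factor''. For $o_{14,2}$ you yourself show $h_1=0$: the solid $\Omega_{14,2}$ contains no $\bF_q$-rational rank-$1$ point, hence the cubic surface is smooth over $\bF_q$ and is not linear in any coordinate after any $\bF_q$-linear change. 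So the twist by $\delta$ affects not only the plane section but also the structure of the main affine piece. Your plan can be repaired (e.g.\ using the Table~\ref{solidsodd} representative of $\Omega_{14,2}$, on the chart $x=1$ the cubic is quadratic in $z$ with discriminant $4(y^2-\gamma)(t^2-\gamma)$, and one checks that when both factors are squares then $-m_{22}=z^2-\gamma^2=(ys\pm tr)^2$ with $r^2=y^2-\gamma$, $s^2=t^2-\gamma$, so those points are exterior, while if both factors are non-squares the points are interior; this yields the stated distribution), but it requires a genuinely different parametrisation than the one you invoke from $o_{14,1}$.

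The paper itself sidesteps this difficulty entirely: for $o_{14,2}$ it switches to a \emph{geometric} flag-counting argument. Using the point-orbit distribution $[0,1,2,q-2]$ of $\ell_{14,2}$, it first shows $h_1=0$ by checking that the $\cH_1$-hyperplanes through each of the three rank-$2$ points miss $\ell_{14,2}$; then it counts flags $(\pi,H_\pi)$ with $\pi$ a conic plane to obtain $h_{2,r}$; and finally it counts flags with $\pi$ a tangent plane (using that exactly the two tangent planes through the unique $\cP_{2,e}$-point meet $\ell_{14,2}$) to obtain $h_{2,i}$. This avoids the cubic-surface computation altogether and is considerably cleaner precisely because the algebraic shortcut from $o_{14,1}$ is unavailable.
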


\begin{proof}
Consider the representative $\ell_{14,2}$ of the line-orbit $o_{14,2}$ 
as described in Table \ref{tableoflinesodd}, where $\delta$ is a non-square.  The associated web of conics, denoted as $\mathcal{W}_{14,2}$, is represented by $\mathcal{W}_{14,2}=(X_0X_1,X_0X_2,\delta X_0^2+X_1^2+\delta X_2^2,X_1X_2)$. The singular conics in $\mathcal{W}_{14,2}$ are the zero locus of $$aX_0X_1+bX_0X_2+c(\delta X_0^2+X_1^2+\delta X_2^2)+dX_1X_2 $$ with 
$abd-c(\delta a^2+b^2+\delta d^2-4\delta^2c^2)=0.$
The point-orbit distribution of $\ell_{14,2}$ is $[0,1,2,q-2]$.
\par
Let $\{P\}=\ell_{14,2}\cap \cP_{2,e}$, and let $R_1$ and $R_2$ denote the two points of $\ell_{14,2}$ in $\cP_{2,i}$. First observe that $P$ corresponds to $(x,y)=(1,-1)$, and the unique hyperplane $H_{\cC(P)}\in \cH_1$ which contains $\cC(P)$ is equal to $\cZ(Y_3)$, and does not contain $\ell_{14,2}$. Next, observe that, by Theorem \ref{thm:hyporbdist_pts_odd}, for each $i\in \{1,2\}$, there is a unique hyperplane of $\cH_1$ through $R_i$, and this hyperplane does not contain $\ell_{14,2}$. This implies that $h_1(\ell_{14,2})=0$.

Next we count flags $(\pi,H_\pi)$ where $\pi$ is a conic plane not containing any of the conics in $\{\cC(P),\cC(R_1),\cC(R_2)\}$, and $H_\pi \in \cH_{2,r}$ is the hyperplane $H_\pi=\langle \ell_{14,2},\pi\rangle$. Each of the three solids $\langle \ell_{14,2},\cC(P)\rangle$, $\langle \ell_{14,2},\cC(R_1)\rangle$, and $\langle \ell_{14,2},\cC(R_2)\rangle$, is contained in $q+1$ hyperplanes of $\cH_{2,r}$, and there are three hyperplanes in $\cH_{2,r}$ which contain exactly two of these three solids. Therefore, there are $\alpha=3q-3$ hyperplanes of $\cH_{2,r}$ containing exactly one of the conics in $\{\cC(P),\cC(R_1),\cC(R_2)\}$. This gives the equation
$$q^2+q-2=3q-3 +2 \beta
$$
where $\beta$ is the number of hyperplanes $H\in \cH_{2,r}$ through $\ell_{14,2}$ not containing any of the conics $\{\cC(P),\cC(R_1),\cC(R_2)\}$. It follows that $\beta=(q^2+4q-5)/2$, and that the number of hyperplanes in $\cH_{2,r}$ through $\ell_{14,2}$ is equal to
$h_{2,r}(\ell_{14,2})=3+\alpha+\beta=(q^2+4q+1)/2$.

To determine $h_{2,i}(\ell_{14,2})$ we count incident pairs $(\pi,H_\pi)$ where $\pi$ is a tangent hyperplane to $\cV(\bF_q)$ and $H_\pi$ is a hyperplane containing $\langle \pi,\ell_{14,2}\rangle$. There are two tangent planes meeting $\ell_{14,2}$ (in the point $P$) and for each such tangent plane $\pi$, the solid $\langle \pi,\ell_{14,2}\rangle$ is contained in $q+1$ hyperplanes of $\cH_{2,r}\cup \cH_{2,i}$. Since $h_{2,r}(\ell_{14,2})=3+\alpha+\beta=(q^2+4q+1)/2$ this gives
$$
\frac{q^2+4q+1}{2}+h_{2,i}(\ell_{14,2})=2(q+1)+q^2+q-1,
$$
from which it follows that $h_{2,i}(\ell_{14,2})=(q^2+2q+1)/2$, and therefore $h_3(\ell_{14,2})=q^3-2q$.
\end{proof}

\begin{Lemma}\label{o15,1}
The hyperplane-orbit distribution of a line in $o_{15,1}$ is $[2,\frac{q^2-1}{2}+q,\frac{q^2-1}{2},q^3]$ for $q$ odd, and $[1,\frac{q^2}{2}+q,\frac{q^2}{2},q^3]$   for $q$ even.
\end{Lemma}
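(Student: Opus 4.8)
The plan is to follow the template used above for the orbits $o_{13,1}$ and $o_{14,1}$. Fix the representative $\ell_{15,1}$ of $o_{15,1}$ and its associated web $\mathcal{W}_{15,1}=(f_1,f_2,f_3,f_4)$ as given in Tables \ref{tableoflinesodd} and \ref{tableoflineseven}, put $f=Af_1+Bf_2+Cf_3+Df_4$, and let $M(a,b,c,d)$ be the symmetric $3\times3$ matrix with linear entries representing the hyperplanes of $\PG(5,q)$ through $\ell_{15,1}$, so that $\det M(a,b,c,d)$ is a scalar multiple of $\Delta_f$. The singular conics of $\mathcal{W}_{15,1}$ are exactly the $\bF_q$-rational points of the cubic surface $\mathcal{X}=\cZ(\Delta_f)\subseteq\PG(3,q)$; among them the double lines correspond to the rank-one points of $M$ (which are precisely the singular points of $\mathcal X$), and the real, respectively imaginary, line-pairs correspond to the rank-two points of $M$ lying in $\cP_{2,e}$, respectively $\cP_{2,i}$ (for $q$ odd), with the analogous splitting when $q$ is even. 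Since a web consists of $q^3+q^2+q+1$ conics, once we show $|\mathcal X(\bF_q)|=q^2+q+1$ we immediately obtain $h_3(\ell_{15,1})=q^3$ for all $q$, and it remains only to distribute the $q^2+q+1$ singular conics among $\cH_1$, $\cH_{2,r}$, $\cH_{2,i}$.

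For $q$ even I would argue directly, in the manner of the proofs for $o_{8,3}$, $o_{12,3}$ and $o_{13,3}$: the condition $\Delta_f=0$ is a low-degree polynomial relation in $a,b,c,d$, and stratifying according to which leading coefficients vanish leaves, on each stratum, a family parametrised by at most one free scalar whose residual binary quadratic is classified by a single trace condition $\operatorname{Tr}(\cdot)\in\{0,1\}$. Summing the strata should give $h_1(\ell_{15,1})=1$, $h_{2,r}(\ell_{15,1})=\tfrac{q^2}{2}+q$ and $h_{2,i}(\ell_{15,1})=\tfrac{q^2}{2}$.

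For $q$ odd the substantive step is the count $|\mathcal X(\bF_q)|$. I would project $\mathcal X$ from one of its two rank-one points, or equivalently sweep $\mathcal X$ by the pencil of planes through a line contained in it, and count $\bF_q$-points fibre by fibre exactly as in the $o_{13,1}$ and $o_{14,1}$ arguments, each fibre count reducing to how often a fixed quadratic-residue condition is met; the total should come out to $q^2+q+1$, which is $q$ fewer than the $(q+1)^2$ obtained for $o_{13,1}$ and reflects that $\mathcal X$ here carries only two nodes rather than three. The rank-one points of $M$ then give $h_1(\ell_{15,1})=2$. To split the remaining $q^2+q-1$ rank-two points I would apply \cite[Lemma 3.7]{nets}: a rank-two point $(a,b,c,d)$ of $\mathcal X$ lies in $\cP_{2,e}$, hence on a hyperplane of $\cH_{2,r}$, exactly when the negatives of the three leading $2\times2$ principal minors of $M(a,b,c,d)$ are all squares in $\bF_q$ and not all zero, and in $\cP_{2,i}$, hence on a hyperplane of $\cH_{2,i}$, otherwise. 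After substituting $\Delta_f=0$ and working stratum by stratum — the line(s) through the two singular points and the complementary affine patch — these minors should become perfect squares up to a common factor, so the split is controlled by a single residue count over $\PG(1,q)$ and should yield $\tfrac{q^2-1}{2}+q$ points in $\cP_{2,e}$ and $\tfrac{q^2-1}{2}$ in $\cP_{2,i}$.

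The bottleneck I anticipate is the odd-$q$ point count of $\mathcal X$ together with keeping the exterior/interior bookkeeping consistent across its strata: the two nodes of $\mathcal X$ need not be rationally separated, so one must check carefully that the residue class of the relevant minor does not jump from one stratum to another, a subtlety absent from the simpler cases $o_{13,1}$ and $o_{13,2}$. Should the fibration argument prove unwieldy, there is a purely geometric alternative in the spirit of the proof for $o_{14,2}$: read the point-orbit distribution of $\ell_{15,1}$ off Tables \ref{tableoflinesodd} and \ref{tableoflineseven}, locate by Theorem \ref{thm:hyporbdist_pts_odd} (respectively Theorem \ref{thm:hyporbdist_pts_even}) the hyperplanes of $\cH_1$ through its points of rank at most two and test which of them contain $\ell_{15,1}$ to obtain $h_1(\ell_{15,1})$, and then count flags $(\pi,H)$ with $\pi$ a conic plane of $\cV(\bF_q)$ and $H=\langle\ell_{15,1},\pi\rangle\in\cH_{2,r}\cup\cH_{2,i}$, reading $h_{2,r}(\ell_{15,1})$ and $h_{2,i}(\ell_{15,1})$ from how the residual lines $\pi\cap H$ meet $\cV(\bF_q)$ and the interior and exterior rank-two points, precisely as in parts (ii)--(iv) of the proof of Theorem \ref{thm:hyporbdist_pts_odd}.
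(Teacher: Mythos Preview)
Your proposal is sound in outline but inverts the paper's choice of method. The paper does \emph{not} use the cubic-surface approach here; it uses exactly what you describe as your ``purely geometric alternative.'' Specifically, for $q$ odd the paper takes $P$ to be the unique point of $\ell_{15,1}$ in $\cP_{2,e}$, verifies that the hyperplane $H_{\cC(P)}\in\cH_1$ containing $\cC(P)$ (namely $\cZ(Y_5)$) does \emph{not} contain $\ell_{15,1}$, and then argues that among the $q+1$ hyperplanes of $\cH_1$ through each point of tangency $R_i$ of $\cC(P)$, exactly one contains $\ell_{15,1}$, giving $h_1=2$. The remaining entries come from two flag counts: one over conic planes $\pi\not\supset\cC(P)$ yielding $q^2+q=2+\alpha+2\beta$ with $\alpha=q+1$, hence $h_{2,r}=\alpha+\beta=(q^2+2q-1)/2$; and one over tangent planes yielding $h_{2,i}=(q^2-1)/2$. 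The even-$q$ case is handled the same way with $P\in\cP_{2,s}$ and a single point of tangency.

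Your primary (cubic-surface) approach should also work, but the representative of $\ell_{15,1}$ carries the parameters $u,v_1$ subject to the irreducibility and square-class constraints, so $\Delta_f$ and the principal minors of $M(a,b,c,d)$ are genuinely messier than in the $o_{13,j}$ cases; this is presumably why the paper switches to the geometric argument precisely here and for $o_{14,2}$, $o_{15,2}$. One small correction: your worry that ``the two nodes of $\mathcal X$ need not be rationally separated'' is misplaced for $o_{15,1}$, since $-v_1\in\square_q$ forces the two tangent points $R_1,R_2$ (and hence the two rank-one points of $M$) to be $\bF_q$-rational; that concern belongs to $o_{15,2}$. More importantly, note that the paper's observation $H_{\cC(P)}\not\supset\ell_{15,1}$ is not incidental: it is the geometric invariant that distinguishes $o_{15,1}$ from $o_{16,1}$ (which shares the same point-orbit distribution), and is singled out later as Lemma~\ref{lineswithsameOD}. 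If you pursue the cubic-surface route you will recover this fact only implicitly, whereas the paper's argument makes it structurally central.
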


\begin{proof}
Consider the representative $\ell_{15,1}$ of the line-orbit $o_{15,1}$ 
as described in Tables \ref{tableoflinesodd} and \ref{tableoflineseven}. Here, $v_1\lambda^2+uv_1\lambda-1 \neq 0$ for all $\lambda\in \Fq$, and $-v_1$ is a nonzero square. The associated web of conics, denoted as $\mathcal{W}_{15,1}$, is given by $\mathcal{W}_{15,1}=(X_0X_2,X_1X_2,X_0X_1-X_2^2,v_1^{-1}X_0^2+uX_0X_1-X_1^2)$.

Suppose $q$ is odd. Let $\{P\}=\ell_{15,1}\cap \cP_{2,e}$, and let $R_1$ and $R_2$ denote the two points of tangency of the tangent lines to $\cC(P)$ through $P$. First observe that the unique hyperplane $H_{\cC(P)}\in \cH_1$ which contains $\cC(P)$ is equal to $\cZ(Y_5)$, and does not contain $\ell_{15,1}$. Let $S$ be the solid spanned by $\ell_{15,1}$ and $\cC(P)$. Each hyperplane $H_\pi\in \cH_1$ through $R_i$ (for $i\in \{1,2\}$) contains $P$ and meets $S$ in a plane through the line $\langle R_i,P\rangle$. This gives a one-to-one correspondence between the $q+1$ hyperplanes in $\cH_1$ through $R_i$ and the $q+1$ planes of the plane-pencil in $S$ through $\langle R_i,P\rangle$. Hence exactly one of these planes contains $\ell_{15,1}$. Since the hyperplane $H_{\cC(P)}\in \cH_1$ does not contain $\ell_{15,1}$, there are exactly two such hyperplanes of $\cH_1$ containing $\ell_{15,1}$: one through $R_1$ and one through $R_2$. Since $h_1(P)=2q+1$ (Theorem \ref{thm:hyporbdist_pts_odd}), no other hyperplane of $\cH_1$ contains $P$, and therefore $h_1(\ell_{15,1})=2$.

Next we count flags $(\pi,H_\pi)$ where $\pi$ is a conic plane not containing $\cC(P)$, and $H_\pi\in \cH_1\cup \cH_{2,r}$ is the hyperplane 
$H_\pi=\langle \ell_{15,1},\pi\rangle$. This gives
$$q^2+q=2+\alpha +2 \beta
$$
where $\alpha$ is the number of hyperplanes $H\in \cH_{2,r}$ through $\ell_{15,1}$ containing $\cC(P)$ and $\beta$ is the number of hyperplanes $H\in \cH_{2,r}$ not containing $\cC(P)$. Since there are $q+1$ hyperplanes through the solid $S=\langle \ell_{15,1},\cC(P)\rangle$, and none of these hyperplanes belongs to $\cH_1$ (see above), it follows that $\alpha=q+1$. This gives $\beta=(q^2-3)/2$ and therefore $h_{2,r}(\ell_{15,1})=\alpha+\beta=(q^2+2q-1)/2$.

By the first part of the proof, exactly two of the hyperplanes of $\cH_1$ contain $\langle \pi,\ell_{15,1}\rangle$, with $\pi$ a tangent plane of $\cV(\bF_q)$, and the total number of tangent planes contained in these two hyperplanes is $2(q+1)$. Call this set of tangent planes $\mathcal T$. Also there are
$(q^2+2q-1)/2$ hyperplanes $H_\pi=\langle \pi,\ell_{15,1}\rangle$, with $\pi\notin \mathcal T$ a tangent plane of $\cV(\bF_q)$, which belong to $\cH_{2,r}$. 
Since a hyperplane in $\cH_1$ contains $q+1$ tangent planes of $\cV(\bF_q)$, and a hyperplane in $\cH_{2,r}\cup \cH_{2,i}$ contains exactly one tangent plane of $\cV(\bF_q)$, counting flags $(\rho,H_\pi)$, where $\rho$ and $\pi$ are tangent planes of $\cV(\bF_q)$, and $H_\pi$ is a hyperplane containing $\pi$ and $\ell_{15,1}$, one obtains
$$
2(q+1)+\frac{q^2+2q-1}{2}+h_{2,i}(\ell_{15,1})=2(q+1)+q^2+q-1,
$$
where the $2(q+1)$ on the right hand side arises from the fact that if $\pi$ is one of the two tangent planes which meet $\ell_{15,1}$, then the solid $\langle \ell_{15,1},\pi\rangle$ is contained in $q+1$ hyperplanes. This gives $h_{2,i}(\ell_{15,1})=(q^2-1)/2$. It follows that $h_3(\ell_{15,1})=q^3$.

\par
The proof is similar for $q$ is even. Again, let $P$ be the unique point of rank two on $\ell_{15,1}$, and recall that $P\in \cP_{2,s}$.
As in the case for $q$ odd, observe that the unique hyperplane $H_{\cC(P)}\in \cH_1$ which contains $\cC(P)$ is equal to $\cZ(Y_5)$, and does not contain $\ell_{15,1}$. However, in this case there is exactly one hyperplane of $\cH_1$ which contains $\ell_{15,1}$, which is one of the $q+1$ hyperplanes in $\cH_1$ containing a conic through the point of tangency of the unique tangent line to $\cC(P)$ through $P$. So $h_1(\ell_{15,1})=1$. A similar reasoning as in the case $q$ odd, gives
$$
h_{2,r}(\ell_{15,1})=\alpha+\beta, \mbox{ where } \alpha=q+1, \mbox{ and } 1+\alpha+2\beta=q^2+q.
$$
Therefore $h_{2,r}(\ell_{15,1})=(q^2+2q)/2$, and 
$$
(q+1)+\frac{q^2+2q}{2}+h_{2,i}(\ell_{15,1})=(q+1) + (q^2+q),
$$
which gives $h_{2,i}(\ell_{15,1})=q^2/2$. It follows that $h_3(\ell_{15,1})=q^3$.

\end{proof}

\begin{Remark}\label{o151}
Note that the proof of the hyperplane-orbit distribution of $o_{15,1}$ depends on the point-orbit distribution of $o_{15,1}$ and on the fact that the hyperplane $H\in \cH_1$ containing $\cC(P)$ does not contain the line $\ell_{15,1}$.
\end{Remark}

\begin{Lemma}
The hyperplane-orbit distribution of a line in $o_{15,2}$ is $[0,\frac{q^2+1}{2}+q,\frac{q^2+1}{2},q^3]$. Note that these types of lines are only defined for $q$ odd.
\end{Lemma}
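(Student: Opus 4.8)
The plan is to run the proof of Lemma~\ref{o15,1} for $o_{15,1}$ essentially unchanged; the only structural difference is that the unique rank-$2$ point $P$ on $\ell_{15,2}$ is \emph{interior} rather than exterior (this is forced by $-v_1$ being a non-square), so that the point-orbit distribution of $\ell_{15,2}$ is $[0,0,1,q]$ (Table~\ref{tableoflinesodd}) and $h_1(P)=1$ instead of $2q+1$ by Theorem~\ref{thm:hyporbdist_pts_odd}. I would take the representative web $\mathcal{W}_{15,2}=(X_0X_2,X_1X_2,X_0X_1-X_2^2,v_1^{-1}X_0^2+uX_0X_1-X_1^2)$. First I would establish $h_1(\ell_{15,2})=0$: exactly as in Lemma~\ref{o15,1} (cf.\ Remark~\ref{o151}) the unique hyperplane $H_{\cC(P)}\in\cH_1$ containing $\cC(P)$ is $\cZ(Y_5)$, which does not contain $\ell_{15,2}$; since every hyperplane of $\cH_1$ through $\ell_{15,2}$ passes through $P$ and $h_1(P)=1$, no hyperplane of $\cH_1$ contains $\ell_{15,2}$.

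For $h_{2,r}$ I would count flags $(\pi,H_\pi)$ with $\pi$ ranging over the $q^2+q$ conic planes not containing $\cC(P)$ and $H_\pi=\langle\ell_{15,2},\pi\rangle$. Since for $q$ odd each rank-$2$ point of $\PG(5,q)$ lies in a unique conic plane and conic planes contain no rank-$3$ points, $\ell_{15,2}$ meets only the conic plane $\pi_P$ of $\cC(P)$, so each $H_\pi$ is a hyperplane, and it lies in $\cH_1\cup\cH_{2,r}$ because it contains the conic $\pi\cap\cV(\bF_q)$. Writing $\alpha$ (resp.\ $\beta$) for the number of hyperplanes of $\cH_{2,r}$ through $\ell_{15,2}$ that do (resp.\ do not) contain $\cC(P)$, the flag count gives $q^2+q=h_1(\ell_{15,2})+\alpha+2\beta=\alpha+2\beta$. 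The solid $S=\langle\ell_{15,2},\cC(P)\rangle$ lies in $q+1$ hyperplanes, all of which contain $\cC(P)\subset\cV(\bF_q)$ and hence lie in $\cH_1\cup\cH_{2,r}$; none is $H_{\cC(P)}$ (which misses $\ell_{15,2}$), so $\alpha=q+1$, giving $\beta=(q^2-1)/2$ and $h_{2,r}(\ell_{15,2})=\alpha+\beta=\tfrac{q^2+1}{2}+q$.

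For $h_{2,i}$ I would use a tangent-plane flag count, which is where the interior/exterior distinction is decisive: since $P$ is interior it lies on no tangent plane of $\cV(\bF_q)$ (for $q$ odd all rank-$2$ points of a tangent plane are exterior, a tangent line carrying $q$ points of $\cP_{2,e}$), and tangent planes contain no rank-$3$ points, so $\ell_{15,2}$ meets no tangent plane and $\langle\rho,\ell_{15,2}\rangle$ is a hyperplane for every tangent plane $\rho$. Counting incident pairs $(\rho,H)$ with $\rho$ a tangent plane, $H$ a hyperplane through $\ell_{15,2}$, $\rho\subset H$, and using (as in Lemma~\ref{o15,1}) that a hyperplane of $\cH_1$ contains $q+1$ tangent planes, one of $\cH_{2,r}\cup\cH_{2,i}$ exactly one, and one of $\cH_3$ none, yields $q^2+q+1=h_1(\ell_{15,2})(q+1)+h_{2,r}(\ell_{15,2})+h_{2,i}(\ell_{15,2})$, hence $h_{2,i}(\ell_{15,2})=\tfrac{q^2+1}{2}$. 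Finally, since a web consists of $q^3+q^2+q+1$ conics, $h_3(\ell_{15,2})=q^3+q^2+q+1-0-\bigl(\tfrac{q^2+1}{2}+q\bigr)-\tfrac{q^2+1}{2}=q^3$.

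The main obstacle is not the bookkeeping, which is parallel to Lemma~\ref{o15,1}, but assembling the correct geometric inputs: confirming that the distinguished rank-$2$ point of $\ell_{15,2}$ is interior (equivalently, that $-v_1$ a non-square forces point-orbit distribution $[0,0,1,q]$), and justifying that, for $q$ odd, interior rank-$2$ points lie on no tangent plane of $\cV(\bF_q)$; once these are in hand the two flag counts go through exactly as above.
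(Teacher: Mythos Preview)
Your proposal is correct and follows essentially the same approach as the paper's proof: both establish $h_1=0$ via Theorem~\ref{thm:hyporbdist_pts_odd} (using $h_1(P)=1$ for the interior point $P$), then obtain $h_{2,r}$ by the conic-plane flag count $q^2+q=\alpha+2\beta$ with $\alpha=q+1$, and finally $h_{2,i}$ by the tangent-plane flag count, using that $\ell_{15,2}$ meets no tangent plane. The only cosmetic discrepancy is that the paper's representative uses the parameter name $v_2$ (with $-v_2$ a non-square) rather than $v_1$.
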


\begin{proof}
Consider the representative $\ell_{15,2}$ of the line-orbit $o_{15,2}$ 
as described in Table \ref{tableoflinesodd}, where $v_2\lambda^2+uv_2\lambda-1 \neq 0$ for all $\lambda\in \Fq$ and   $-v_2$ is a non-square in $\bF_q$.
The associated web of conics, denoted as $\mathcal{W}_{15,2}$, is given by $\mathcal{W}_{15,2}=(X_0X_2,X_1X_2,X_0X_1-X_2^2,v_2^{-1}X_0^2+uX_0X_1-X_1^2)$. 

\par
Let $\{P\}=\ell_{15,2}\cap \cP_{2,i}$.
As in the previous case, the unique hyperplane $H_{\cC(P)}\in \cH_1$ which contains $\cC(P)$ is equal to $\cZ(Y_5)$, and does not contain $\ell_{15,2}$. Since (by Theorem \ref{thm:hyporbdist_pts_odd}) $h_1(P)=1$, there are no hyperplanes of $\cH_1$ containing $\ell_{15,2}$.

Next we count flags $(\pi,H_\pi)$ where $\pi$ is a conic plane not containing $\cC(P)$, and $H_\pi\in \cH_1\cup \cH_{2,r}$ is the hyperplane 
$H_\pi=\langle \ell_{15,2},\pi\rangle$. This gives
$$q^2+q=\alpha +2 \beta
$$
where $\alpha$ is the number of hyperplanes $H\in \cH_{2,r}$ through $\ell_{15,2}$ containing $\cC(P)$ and $\beta$ is the number of hyperplanes $H\in \cH_{2,r}$ not containing $\cC(P)$. Since there are $q+1$ hyperplanes through the solid $S=\langle \ell_{15,2},\cC(P)\rangle$, and none of these hyperplanes belongs to $\cH_1$ (see above), it follows that $\alpha=q+1$. This gives $\beta=(q^2-1)/2$ and therefore $h_{2,r}(\ell_{15,2})=\alpha+\beta=(q^2+2q+1)/2$.

So there are $(q^2+2q+1)/2$ hyperplanes $H_\pi=\langle \pi,\ell_{15,1}\rangle$, with $\pi$ a tangent plane of $\cV(\bF_q)$, which belong to $\cH_{2,r}$. 
Since a hyperplane in $\cH_{2,r}\cup \cH_{2,i}$ contains exactly one tangent plane of $\cV(\bF_q)$, counting flags $(\rho,H_\pi)$, where $\rho$ and $\pi$ are tangent planes of $\cV(\bF_q)$, and $H_\pi$ is a hyperplane containing $\pi$ and $\ell_{15,2}$, one obtains
$$
\frac{q^2+2q+1}{2}+h_{2,i}(\ell_{15,2})=q^2+q+1,
$$
since none of the tangent planes meets $\ell_{15,2}$.
This gives $h_{2,i}(\ell_{15,2})=(q^2+1)/2$. It follows that $h_3(\ell_{15,2})=q^3$.
\end{proof}

\begin{Lemma}\label{o16,1}
The hyperplane-orbit distribution of a line in $o_{16,1}$ is $[2,\frac{q^2-1}{2}+q,\frac{q^2-1}{2},q^3]$ for $q$ odd, and $[q+1,\frac{q^2+q}{2}, \frac{q^2-q}{2},q^3]$ for $q$ even.
\end{Lemma}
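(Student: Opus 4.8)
The plan is to split by parity and, for $q$ odd, to reduce to Lemma \ref{o15,1} rather than recompute. For $q$ odd the asserted distribution $[2,\frac{q^2-1}{2}+q,\frac{q^2-1}{2},q^3]$ is exactly the one obtained there for $o_{15,1}$, and by Remark \ref{o151} that proof uses only two facts about the line: its point-orbit distribution, and that the unique hyperplane of $\cH_1$ through $\cC(P)$, with $P$ the line's unique rank-$2$ point, does not contain the line. So I would first verify — from Table \ref{tableoflinesodd}, or directly from the symmetric matrix $M(a,b,c,d)$ parametrising the hyperplanes through $\ell_{16,1}$ — that $\ell_{16,1}$ contains no point of $\cV(\bF_q)$, exactly one point $P$ of rank $2$, that $P\in\cP_{2,e}$, and $q$ points of rank $3$; i.e. the same point-orbit distribution $[0,1,0,q]$ as $o_{15,1}$. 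Next I would exhibit $\cC(P)$ and the hyperplane $H_{\cC(P)}\in\cH_1$ it spans and check $\ell_{16,1}\not\subset H_{\cC(P)}$, a one-line linear-algebra computation. Granting these, the flag counts in the proof of Lemma \ref{o15,1} (over conic planes not through $\cC(P)$, and over tangent planes) transfer verbatim and deliver $h_1=2$, $h_{2,r}=\frac{q^2-1}{2}+q$, $h_{2,i}=\frac{q^2-1}{2}$, and $h_3=q^3$.

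For $q$ even the situation is genuinely different, since now $h_1=q+1\neq 1$, so the reduction to Lemma \ref{o15,1} is unavailable. I would first read off from Table \ref{tableoflineseven} the point-orbit distribution of $\ell_{16,1}$ — which, given $h_3=q^3$, must again be one rank-$2$ point together with $q$ points of rank $3$ — and use the position of that rank-$2$ point relative to the nucleus plane $\pi_{\mathcal N}$ to pin down $h_1$: since $\pi_{\mathcal N}$ lies in every hyperplane of $\cH_1$, the value $h_1(\ell_{16,1})=q+1$ drops out quickly (for instance, if $\ell_{16,1}$ meets $\pi_{\mathcal N}$ in its rank-$2$ point then $h_1(\ell_{16,1})=h_1(Q)=q+1$ for $Q$ any rank-$3$ point on it). For $h_{2,r}$ and $h_{2,i}$ I would imitate the even part of Lemma \ref{o15,1}: count flags $(\pi,H_\pi)$ with $\pi$ a conic plane and $H_\pi=\langle\ell_{16,1},\pi\rangle\in\cH_1\cup\cH_{2,r}$, record how many rank-$2$ points of each $K$-orbit a $2$-secant or a tangent line of a conic of $\cV(\bF_q)$ carries, and exploit that $\langle\ell_{16,1},\pi\rangle$ lies in $q+1$ hyperplanes; this should yield $h_{2,r}=\frac{q^2+q}{2}$, $h_{2,i}=\frac{q^2-q}{2}$, and hence $h_3=q^3$. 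As a fallback one can read the whole $q$ even distribution off the associated web $\mathcal{W}_{16,1}$ of Table \ref{tableoflineseven} directly: the singular conics fill out the cubic surface $\cZ(\Delta_f)\subset\PG(3,q)$, one counts its $\bF_q$-rational points, isolates the double lines as the rank-one locus of $M(a,b,c,d)$, and splits the remaining rank-$2$ points into pairs of real and pairs of conjugate lines via the trace criterion recalled in Section \ref{pre} and the principal-minor test of \cite[Lemma 3.7]{nets}.

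I expect the main obstacle to lie in the $q$ even case, specifically in the accounting that separates $h_{2,r}$ from $h_{2,i}$: whichever route one takes — the geometric flag count or the cubic-surface computation — this is where the case splits on the parameters $(a,b,c,d)$ and the trace conditions are most error-prone. Either way there is a built-in consistency check: $h_3=q^3$ must agree with the general fact, established later in Section \ref{webs}, that a line of $\PG(5,q)$ carrying $q+i$ points of rank $3$ lies in exactly $q^3+iq$ hyperplanes of $\cH_3$; here $i=0$, forcing $\ell_{16,1}$ to have precisely $q$ points of rank $3$ and a single point of rank at most $2$, for both parities. Finally, for $q$ odd the computation shows that $o_{16,1}$ and $o_{15,1}$ have identical hyperplane-orbit distributions, in line with the statement of Theorem \ref{webs1} that this invariant fails to distinguish exactly one pair of web classes.
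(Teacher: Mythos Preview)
Your reduction for $q$ odd rests on a false premise. You propose to verify that the unique hyperplane $H_{\cC(P)}\in\cH_1$ containing $\cC(P)$ does \emph{not} contain $\ell_{16,1}$, and then transfer the flag counts of Lemma~\ref{o15,1} verbatim. But that ``one-line linear-algebra computation'' comes out the other way: for the representative in Table~\ref{tableoflinesodd} the unique rank-$2$ point is $P=(0,0,0,0,1,0)$, the conic $\cC(P)$ lies in the plane $Y_0=Y_1=Y_2=0$, hence $H_{\cC(P)}=\cZ(Y_0)$, and every point of $\ell_{16,1}$ has $y_0=0$, so $\ell_{16,1}\subset H_{\cC(P)}$. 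This is exactly Remark~\ref{o161}, and is in fact the geometric feature that separates $o_{16,1}$ from $o_{15,1}$ (Lemma~\ref{lineswithsameOD}). Consequently the hypothesis singled out in Remark~\ref{o151} fails, and the proof of Lemma~\ref{o15,1} does not transfer as written: already the argument for $h_1=2$ there uses that $H_{\cC(P)}$ is \emph{not} one of the $\cH_1$-hyperplanes through the line, and the subsequent flag count $q^2+q=2+\alpha+2\beta$ with $\alpha=q+1$ depends on none of the $q+1$ hyperplanes through $\langle\ell,\cC(P)\rangle$ lying in $\cH_1$. For $\ell_{16,1}$ one of them does, so even granting $h_1=2$ the bookkeeping has to be redone (one gets $1+q+2\beta'=q^2+q$ instead). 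The coincidence of the final distributions for $o_{15,1}$ and $o_{16,1}$ is a theorem (Corollary~\ref{aux5}), not a consequence of identical hypotheses.

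The paper avoids this entirely by a direct computation for both parities: it writes down the web $\mathcal{W}_{16,1}=(X_0^2,X_0X_1,X_0X_2-X_1^2,X_2^2)$, imposes the discriminant condition $-4acd+c^3-db^2=0$, and does a short case split ($cd=0$ versus $cd\neq 0$ for $q$ odd; $b=0$ versus $b\neq 0$ for $q$ even) to read off $h_1,h_{2,r},h_{2,i},h_3$. Your $q$-even outline is closer to workable: the observation that the rank-$2$ point of $\ell_{16,1}$ lies in $\pi_{\mathcal N}$, hence in every $\cH_1$-hyperplane, so that $h_1(\ell_{16,1})=h_1(Q)=q+1$ for any rank-$3$ point $Q$ on the line, is correct and clean. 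But for $q$ odd you need either the paper's direct computation or a genuinely modified flag count that accommodates $\ell_{16,1}\subset H_{\cC(P)}$; the reduction you sketch does not survive the check you yourself propose.
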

\begin{proof}
Consider the representative $\ell_{16,1}$ of the line-orbit $o_{16,1}$ 
as described in Tables \ref{tableoflinesodd} and \ref{tableoflineseven}. The associated web of conics, denoted as $\mathcal{W}_{16,1}$, is given by $\mathcal{W}_{16,1}=(X_0^2,X_0X_1,X_0X_2-X_1^2,X_2^2)$. The singular conics of $\mathcal{W}_{16,1}$ are the zero locus of $$aX_0^2+bX_0X_1+c(X_0X_2-X_1^2)+dX_2^2$$ satisfying $-4acd+c^3-db^2=0$. Let's first assume that $q$ is odd. If $cd=0$, singular conics of $\mathcal{W}_{16,1}$ reduce to $aX_0^2+X_2^2=0$ and $aX_0^2+bX_0X_1=0$, defining $2$ double lines, $q+\frac{q-1}{2}$ pairs of real lines, and $\frac{q-1}{2}$ pairs of imaginary lines. Otherwise, if we set $d=1$, then $a=\frac{c^3-b^2}{4c}$, and $\mathcal{W}_{16,1}$ comprises $\frac{q(q-1)}{2}$ pairs of real lines and $\frac{q(q-1)}{2}$ pairs of imaginary lines, as explained in $\mathcal{W}_{10}$. For $q$ even, $\mathcal{W}_{16,1}$ has $q+1$ double lines defined by $aX_0^2+dX_2^2=0$. If $b\neq0$, we can set $d=c^3$, leading to $\frac{q(q+1)}{2}$ pairs of real lines and $\frac{q(q-1)}{2}$ pairs of imaginary lines of the form $aX_0^2+X_0(X_1+cX_2)+c(X_1+cX_2)^2=0$.
In summary, $\mathcal{W}_{16,1}$ has a total of $2$ (resp. $q+1$) double lines, $\frac{q^2-1}{2}+q$ (resp. $\frac{q^2+q}{2}$) pairs of real lines, $\frac{q^2-1}{2}$ (resp. $\frac{q^2-q}{2}$) pairs of imaginary lines, and $q^3$ non-singular conics over finite fields of odd (resp. even) order.
\end{proof}

\begin{Remark}\label{o161}
The unique hyperplane $H_{\cC(P)}\in \cH_1$ which contains $\cC(P)$, where $\{P\}=\ell_{16,1}\cap \cP_{2,e}$, also contains $\ell_{16,1}$. 

\end{Remark}

\begin{Lemma}\label{o16,3}
The hyperplane-orbit distribution of a line in $o_{16,3}$ is $[1,\frac{q^2}{2}+q,\frac{q^2}{2},q^3]$. Note that these types of lines are only defined for $q$ even.
\end{Lemma}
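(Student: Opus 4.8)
The plan is to mimic the treatment of the other even-characteristic ``$o_{16}$-type'' line-orbits (compare Lemma \ref{o16,1} for $o_{16,1}$, $q$ even), reducing everything to a direct count of the singular members of the web $\mathcal{W}_{16,3}$ attached to a representative $\ell_{16,3}$ of $o_{16,3}$, as listed in Table \ref{tableoflineseven}. Concretely, write a general member of the web as $f=af_1+bf_2+cf_3+df_4$ and impose $\Delta_f=0$; since $q$ is even the discriminant collapses to $\Delta_f=a_{01}a_{02}a_{12}+a_{00}a_{12}^2+a_{11}a_{02}^2+a_{22}a_{01}^2$ in the notation of Section \ref{pre}, so this is a single cubic equation in $(a,b,c,d)\in\PG(3,q)$, namely the equation of the cubic surface $\mathcal{X}$ associated with $\ell_{16,3}$, whose $\bF_q$-rational points are exactly the singular conics of $\mathcal{W}_{16,3}$. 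The computation then splits into a case analysis on the vanishing of one or two of the parameters, exactly as in the proofs of Lemmas \ref{o16,1} and \ref{w_{13,1}}: a ``degenerate'' branch, cut out by a low-dimensional locus of $\PG(3,q)$, accounting for the double line and $O(q)$ pairs of lines, and a ``generic'' branch of roughly $q^2$ conics parametrised by solving the cubic for one parameter in terms of the other two.

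In the degenerate branch I expect the cubic to reduce to a binary quadratic form $\alpha U^2+\beta UV+\gamma V^2$ in two of the parameters, whose projective zeros I would classify with the elementary criterion recalled in Section \ref{pre}: for $q$ even such a form has one zero if $\beta=0$, two if $\beta\neq 0$ and $\operatorname{Tr}(\alpha\gamma/\beta^2)=0$, and none otherwise. Reading off from the reduced quadratic $f$ whether $\cZ(f)$ is a double line or a pair of (real, resp.\ imaginary) lines then yields the single double line and the ``extra'' $q$ pairs of real lines. In the generic branch every $\cZ(f)$ is a pair of lines, and whether these are defined over $\bF_q$ or only over $\bF_{q^2}$ is governed by a single trace condition which I expect to split the $q^2$ conics evenly into $\tfrac{q^2}{2}$ pairs of real lines and $\tfrac{q^2}{2}$ pairs of imaginary lines. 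Summing the two branches should give $1$ double line, $\tfrac{q^2}{2}+q$ pairs of real lines and $\tfrac{q^2}{2}$ pairs of imaginary lines, hence $h_3(\ell_{16,3})=(q^3+q^2+q+1)-\bigl(1+\tfrac{q^2}{2}+q+\tfrac{q^2}{2}\bigr)=q^3$. As a sanity check, the single double line matches $h_1=1$ and the identity $h_3=q^3$ is consistent with the point-orbit distribution of $o_{16,3}$ via the general pattern that a line with $q+i$ points of rank $3$ lies in $q^3+iq$ hyperplanes of $\cH_3$ (here $i=0$); in the spirit of Remark \ref{o161} one also expects the unique hyperplane of $\cH_1$ through the rank-two point $P$ of $\ell_{16,3}$ that contains $\cC(P)$ to be the one containing $\ell_{16,3}$.

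The step I expect to be the main obstacle is the bookkeeping of the trace conditions: pinning down the exact argument of $\operatorname{Tr}(\cdot)$ in both branches, so that the real/imaginary split in the generic branch is exactly $\tfrac{q^2}{2}$ versus $\tfrac{q^2}{2}$ and the extra $q$ real pairs are accounted for without over- or under-counting. A secondary subtlety is the precise factorisation behaviour of the cubic $\Delta_f$: if the substitution used to parametrise the generic branch degenerates (for instance because the relevant coefficient vanishes on a positive-dimensional locus), the count there has to be reorganised around the pencils of $\PG(3,q)$ through a fixed point, exactly as in the proof of Lemma \ref{w_{13,1}}, and I would fall back on that device if needed.
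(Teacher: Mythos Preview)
Your plan is essentially the paper's own proof: take the representative $\ell_{16,3}$ from Table \ref{tableoflineseven}, write the discriminant cubic $ad^2+c^3+db^2+bcd=0$ for the web $\mathcal{W}_{16,3}=(X_0^2,X_0X_1,X_0X_2+X_1^2,X_1X_2+X_2^2)$, and split into the degenerate branch $d=0$ (forcing $c=0$, yielding $X_0(aX_0+bX_1)=0$: one double line and $q$ real pairs, no trace condition needed here) and the generic branch $d=1$, $a=c^3+b^2+bc$, where the form rewrites as $c(cX_0+X_1)^2+(cX_0+X_1)(bX_0+X_2)+(bX_0+X_2)^2$ and the trace condition on $c$ splits the $q^2$ conics evenly. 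Your only slight miscalibration is expecting a trace argument already in the degenerate branch; otherwise the approach and the count coincide with the paper.
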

\begin{proof}
Consider the representative $\ell_{16,3}$ of the line-orbit $o_{16,3}$ 
as described in Table \ref{tableoflineseven}. The associated web of conics is defined as $\mathcal{W}_{16,3}=(X_0^2,X_0X_1,X_0X_2+X_1^2,X_1X_2+X_2^2)$. Singular conics in $\mathcal{W}_{16,3}$ are the zero locus of $$aX_0^2+bX_0X_1+c(X_0X_2+X_1^2)+d(X_1X_2+X_2^2)$$ with  $ad^2+c^3+db^2+bcd=0$. If $d=0$, $\mathcal{W}_{16,3}$ comprises a double line and $q$ pairs of real lines defined by the equation $aX_0^2+bX_0X_1=0$. Otherwise, assuming $a=c^3+b^2+bc$, the web comprises $\frac{q^2}{2}$ pairs of real lines and $\frac{q^2}{2}$ pairs of imaginary lines, represented by solutions to the equation $c(c^2X_0^2+X_1^2)+(cX_0+X_1)(bX_0+X_2)+(bX_0+X_2)^2=0$. In summary, $\mathcal{W}_{16,3}$ contains one double line, $\frac{q^2}{2}+q$ pairs of real lines, $\frac{q^2}{2}$ pairs of imaginary lines, and $q^3$ non-singular conics.

\end{proof}

\begin{Remark}\label{o163}
The unique hyperplane $H_{\cC(P)}\in \cH_1$ which contains $\cC(P)$, where $\{P\}=\ell_{16,3}\cap \cP_{2,s}$, also contains $\ell_{16,3}$. 
\end{Remark}

The proofs of the hyperplane-orbit distributions of the lines with point-orbit distribution $[0,1,0,q]$ for $q$ odd, and $[0,0,1,q]$ for $q$ even, imply the following lemma, giving an interesting new geometric invariant for the corresponding $K$-orbits on lines, which was not known before. This should be compared to the previously known method from \cite[Lemma 1]{T233paper}, to distinguish between the different $K$-orbits of such lines.

\begin{Lemma}\label{lineswithsameOD}
    For $q$ odd, a line $L$ of $\PG(5,q)$ with $OD_1(L)=[0,1,0,q]$ belongs to $o_{16,1}$ if and only if the unique hyperplane $H_{\cC(P)}\in \cH_1$ which contains $\cC(P)$, where $\{P\}=L\cap \cP_{2,e}$, also contains $L$. Otherwise, $L\in o_{15,1}$.  For $q$ even, a line $L$ of $\PG(5,q)$ with $OD_1(L)=[0,0,1,q]$ belongs to $o_{16,3}$ if and only if the unique hyperplane $H_{\cC(P)}\in \cH_1$ which contains $\cC(P)$, where $\{P\}=L\cap \cP_{2,s}$, also contains $L$. Otherwise, $L\in o_{15,1}$.

\end{Lemma}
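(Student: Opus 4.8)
The plan is to reduce everything to a counting-free argument. By Theorem~\ref{lines} and the data in Tables~\ref{tableoflinesodd} and \ref{tableoflineseven}, the only $K$-orbits of lines of $\PG(5,q)$ with point-orbit distribution $[0,1,0,q]$ (for $q$ odd) are $o_{15,1}$ and $o_{16,1}$, and the only ones with point-orbit distribution $[0,0,1,q]$ (for $q$ even) are $o_{15,1}$ and $o_{16,3}$. Hence, to decide which orbit a given such line $L$ lies in, it is enough to exhibit one $K$-invariant property which holds for exactly one of the two representatives in each case, and the property singled out in the statement is exactly of this kind.

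First I would verify that the condition ``$H_{\cC(P)}$ contains $L$'' is $K$-invariant. For a line $L$ of the prescribed type, the point $P$ is intrinsically the unique rank-$2$ point of $L$ lying in $\cP_{2,e}$ (for $q$ odd) or in $\cP_{2,s}$ (for $q$ even), the conic $\cC(P)\subset\cV(\bF_q)$ is canonically attached to $P$, and $H_{\cC(P)}$ is the unique hyperplane of $\cH_1$ meeting $\cV(\bF_q)$ in $\cC(P)$. Any $g\in K$ preserves rank and the point-orbits, fixes $\cV(\bF_q)$ setwise together with its conics, and permutes $\cH_1$; hence for the line $gL$ the corresponding data are $g(P)$, $g(\cC(P))=\cC(g(P))$ and $g(H_{\cC(P)})=H_{\cC(g(P))}$, and $L\subseteq H_{\cC(P)}$ holds if and only if $gL\subseteq H_{\cC(g(P))}$. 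So whether the condition is satisfied depends only on the $K$-orbit of $L$.

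Next I would read off the behaviour of the two representatives in each parity. For $q$ odd, Remark~\ref{o161} states that the hyperplane $H_{\cC(P)}$ with $\{P\}=\ell_{16,1}\cap\cP_{2,e}$ contains $\ell_{16,1}$, whereas in the proof of Lemma~\ref{o15,1} (see also Remark~\ref{o151}) it is shown that the hyperplane $\cZ(Y_5)=H_{\cC(P)}$ with $\{P\}=\ell_{15,1}\cap\cP_{2,e}$ does not contain $\ell_{15,1}$. Since $o_{15,1}$ and $o_{16,1}$ exhaust the lines with point-orbit distribution $[0,1,0,q]$, the $K$-invariance established above forces $L\in o_{16,1}$ when the condition holds and $L\in o_{15,1}$ otherwise. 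For $q$ even the argument is word for word the same, using Remark~\ref{o163} for $\ell_{16,3}$ and the part of the proof of Lemma~\ref{o15,1} dealing with $q$ even (again $\cZ(Y_5)=H_{\cC(P)}$ does not contain $\ell_{15,1}$) for $\ell_{15,1}$, together with the fact that $o_{15,1}$ and $o_{16,3}$ are the only orbits with point-orbit distribution $[0,0,1,q]$.

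The one step requiring genuine care --- the main obstacle --- is the very first one: confirming from the line-orbit tables that no further $K$-orbit shares the relevant point-orbit distribution, since only then does a single separating invariant determine the orbit. Once this is secured, the rest is a direct assembly of Remarks~\ref{o151}, \ref{o161} and \ref{o163} with the routine $K$-invariance check.
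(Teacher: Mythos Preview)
Your proposal is correct and follows essentially the same approach as the paper: the paper's proof simply reads ``The proof follows from the proofs of the hyperplane-orbit distributions of such lines, see Remark~\ref{o151}, \ref{o161} and \ref{o163}'', and you have faithfully unpacked this by (i) using the tables to see that only two orbits share the relevant point-orbit distribution, (ii) noting the $K$-invariance of the separating condition, and (iii) invoking those same remarks. Your version is more explicit but not different in substance.
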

\begin{proof}
The proof  follows from the proofs of the hyperplane-orbit distributions of such lines, see Remark \ref{o151}, \ref{o161} and \ref{o163}.
\end{proof}

\begin{Corollary}\label{aux5}
{ Lines of $\PG(5,q)$ that have the same point-orbit distribution, also share the same hyperplane-orbit distribution.}
\end{Corollary}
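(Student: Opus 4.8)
The plan is to reduce the statement to a finite verification built on the classification of $K$-orbits of lines (Theorem \ref{lines}). The first observation is that both $OD_1$ and $OD_4$ are $K$-invariants: if $\kappa\in K$ and $L'=\kappa(L)$, then $\kappa$ permutes the point-orbits $\cP_i$ among themselves and the hyperplane-orbits $\cH_i$ among themselves while preserving incidence, so $OD_1(L')=OD_1(L)$ and $OD_4(L')=OD_4(L)$. Hence each of $OD_1$ and $OD_4$ is constant on a $K$-orbit of lines, and it suffices to prove the following: whenever two distinct $K$-orbits of lines of $\PG(5,q)$ share the same point-orbit distribution, they also share the same hyperplane-orbit distribution. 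Since every line of $\PG(5,q)$ lies in exactly one $K$-orbit, two lines with the same point-orbit distribution then either lie in the same orbit — in which case they trivially have the same hyperplane-orbit distribution — or form one of the finitely many exceptional pairs just described, for which equality of $OD_4$ will be checked directly.

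Next I would carry out the comparison orbit by orbit, separately for $q$ odd and $q$ even, reading the point-orbit distributions off Tables \ref{tableoflinesodd} and \ref{tableoflineseven} and the hyperplane-orbit distributions off the Lemmas preceding this corollary (equivalently, off Tables \ref{tablewebsodd} and \ref{tablewebseven}). A direct inspection shows that two distinct $K$-orbits of lines share a point-orbit distribution only for the pair $\{o_{15,1},o_{16,1}\}$ when $q$ is odd and for the pair $\{o_{15,1},o_{16,3}\}$ when $q$ is even, the common point-orbit distribution being $[0,1,0,q]$ in the odd case and $[0,0,1,q]$ in the even case. For exactly these distinguished orbits the required equality is the content of Lemma \ref{lineswithsameOD}: comparing Lemma \ref{o15,1} with Lemma \ref{o16,1} gives, for $q$ odd,
\[
OD_4(\ell_{15,1})=OD_4(\ell_{16,1})=\Big[2,\tfrac{q^2-1}{2}+q,\tfrac{q^2-1}{2},q^3\Big],
\]
while comparing Lemma \ref{o15,1} with Lemma \ref{o16,3} gives, for $q$ even,
\[
OD_4(\ell_{15,1})=OD_4(\ell_{16,3})=\Big[1,\tfrac{q^2}{2}+q,\tfrac{q^2}{2},q^3\Big].
\]
Combining this with the invariance discussed above completes the proof.

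The main (and essentially only) obstacle is bookkeeping: one must be certain that the tabulated point-orbit distributions really do separate the $15$ orbits except for those two pairs, i.e. that no further coincidence among the point-orbit distributions has been overlooked, and that the $q$ odd and $q$ even lists are treated consistently, since several orbits (for instance $o_{8,2},o_{13,2},o_{14,2},o_{15,2}$ and $o_{8,3},o_{12,3},o_{13,3},o_{16,3}$) occur only for one parity and hence are never compared across parities. Once Tables \ref{tableoflinesodd} and \ref{tableoflineseven} are laid side by side this check is routine, and no geometric input beyond Lemma \ref{lineswithsameOD} and the hyperplane-orbit computations already carried out in this section is needed.
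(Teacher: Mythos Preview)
Your proof is correct and follows essentially the same approach as the paper: reduce via the $K$-invariance of $OD_1$ and $OD_4$ to the two exceptional pairs $\{o_{15,1},o_{16,1}\}$ ($q$ odd) and $\{o_{15,1},o_{16,3}\}$ ($q$ even), then verify equality of $OD_4$ from Lemmas~\ref{o15,1}, \ref{o16,1}, and \ref{o16,3}. One small point: Lemma~\ref{lineswithsameOD} itself does not assert the equality of hyperplane-orbit distributions (it gives a geometric criterion distinguishing the orbits), so the comparison you actually need is just the one you make via Lemmas~\ref{o15,1}, \ref{o16,1}, and \ref{o16,3}.
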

\begin{proof}
 The point-orbit distribution serves as a complete invariant for lines, except for the orbits $\{o_{15,1},o_{16,3}\}$ for $q$ even with a point-orbit distribution $[0,0,1,q]$ and $\{o_{15,1},o_{16,1}\}$ for $q$ odd with a point-orbit distribution $[0,1,0,q]$. Therefore, it suffices to show that lines in $\{o_{15,1},o_{16,3}\}$ (resp. $\{o_{15,1},o_{16,1}\}$) share the same hyperplane-orbit distributions for $q$ even (resp. odd), which  follows from Lemmas \ref{o15,1}, \ref{o16,1} and \ref{o16,3}. 
\end{proof}

\begin{Lemma}
The hyperplane-orbit distribution of a line in $o_{17}$ is $[1,\frac{q^2+q}{2},\frac{q^2-q}{2},q^3+q]$.
\end{Lemma}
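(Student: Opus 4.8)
The plan is to mimic the template used above for the orbits $o_{13,1}$, $o_{13,2}$ and $o_{14,1}$. Fix the representative $\ell_{17}$ of $o_{17}$ from Tables \ref{tableoflinesodd} and \ref{tableoflineseven}, and write its associated web as $\mathcal{W}_{17}=(f_1,f_2,f_3,f_4)$. The singular conics of $\mathcal{W}_{17}$ are parametrised by the $\bF_q$-rational points of the cubic surface $\mathcal{X}=\cZ(\Delta_f)\subset\PG(3,q)$ associated with $\ell_{17}$, where $\Delta_f$ is the discriminant of $f=Af_1+Bf_2+Cf_3+Df_4$; equivalently, by those $(a,b,c,d)\in\PG(3,q)$ for which the symmetric matrix $M(a,b,c,d)$ representing the hyperplane through $\ell_{17}$ has rank at most $2$. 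The four entries of $OD_4(\ell_{17})$ are then, respectively: the number of rank-$1$ points of $\mathcal{X}$ (double lines); the number of rank-$2$ points of $\mathcal{X}$ corresponding to pairs of real lines; the number of rank-$2$ points corresponding to pairs of conjugate imaginary lines; and $q^3+q^2+q+1-|\mathcal{X}(\bF_q)|$ (non-singular conics).

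The first step is to compute $|\mathcal{X}(\bF_q)|$. I would slice $\mathcal{X}$ by the coordinate hyperplanes $\cZ(A),\cZ(B),\cZ(C),\cZ(D)$ to isolate the low-dimensional components lying inside them (unions of lines and conics, as in the proof of Lemma \ref{w_{13,1}}), and then parametrise the complement of a suitable union of these coordinate hyperplanes by a rational map from an affine plane, picking up exactly one point of $\mathcal{X}$ on each relevant line through a fixed coordinate point. Summing the contributions should give $|\mathcal{X}(\bF_q)|=q^2+1$, hence $h_3(\ell_{17})=q^3+q$; this is consistent with the relation recorded before this lemma (a consequence of Theorem \ref{webs2}) that a line with $q+i$ points of rank $3$ lies in $q^3+iq$ hyperplanes of $\cH_3$, since $OD_1(\ell_{17})$ has all $q+1$ of its points of rank $3$, i.e. $i=1$.

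The second step locates the rank-$1$ points of $\mathcal{X}$, namely the $(a,b,c,d)$ for which all $2\times 2$ minors of $M(a,b,c,d)$ vanish; one expects exactly one such point, giving $h_1(\ell_{17})=1$ and leaving $h_{2,r}(\ell_{17})+h_{2,i}(\ell_{17})=q^2$. The third step splits this total. For $q$ odd I would invoke \cite[Lemma 3.7]{nets}: a rank-$2$ point $(a,b,c,d)$ lies in $\cP_{2,e}$ (so its hyperplane is in $\cH_{2,r}$) precisely when the negatives of the three $2\times 2$ principal minors of $M(a,b,c,d)$ are all squares with at least one nonzero. Substituting the parametrisation of $\mathcal{X}$ from the first step, the square conditions should collapse to one condition of the shape ``$D(b,c,d)\in\square_q\setminus\{0\}$'' for an explicit quadratic $D$, whose solution locus has size $\tfrac{q^2+q}{2}$; this yields $h_{2,r}(\ell_{17})=\tfrac{q^2+q}{2}$ and hence $h_{2,i}(\ell_{17})=\tfrac{q^2-q}{2}$. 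For $q$ even I would instead apply the criterion recalled in Section \ref{pre} for when a quadratic $\alpha X^2+\beta X+\gamma$ splits over $\bF_q$ to the reduced binary quadratic forms of the rank-$2$ conics of $\mathcal{W}_{17}$, counting via the trace map; since the statement is parity-independent, the split must again be $\tfrac{q^2+q}{2}$, $\tfrac{q^2-q}{2}$.

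The main obstacle is the first step together with the square/trace bookkeeping in the third: the number of $\bF_q$-points of a cubic surface, and the count of rank-$2$ points meeting the square conditions, can a priori depend on whether certain auxiliary quantities are squares in $\bF_q$, whereas the final answer does not, so the computation must exhibit the cancellations that remove this dependence (ideally unifying the two parities). An alternative route that sidesteps the cubic-surface count for $h_3$ is to first read off $OD_1(\ell_{17})=[0,0,0,q+1]$ from Tables \ref{tableoflinesodd}–\ref{tableoflineseven}, deduce $h_3(\ell_{17})=q^3+q$ from Theorem \ref{webs2}, obtain $h_1(\ell_{17})=1$ by counting conic planes $\pi$ with $\ell_{17}\subset H_\pi\in\cH_1$, and pin down the $h_{2,r}$–$h_{2,i}$ split by a flag count over the tangent planes of $\cV(\bF_q)$ in the style of the proofs for $o_{14,2}$ and $o_{15,2}$; by Corollary \ref{aux5} this would simultaneously confirm that every line sharing the point-orbit distribution $[0,0,0,q+1]$ has this hyperplane-orbit distribution.
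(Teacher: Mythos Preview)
Your alternative route in the last paragraph is close to the paper's actual proof, but both your main plan and the alternative have a genuine gap: you invoke Theorem~\ref{webs2} (or its consequence that a line with $q+i$ rank-$3$ points lies in $q^3+iq$ hyperplanes of $\cH_3$) to obtain $h_3(\ell_{17})$. In the paper, Theorem~\ref{webs2} is \emph{deduced} from the hyperplane-orbit distributions of all line orbits, including $o_{17}$; its proof reads ``This is a consequence of the hyperplane-orbit distributions of lines in $\PG(5,q)$.'' So using it here is circular. You must compute $OD_4(\ell_{17})$ without it.

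The paper's proof avoids the cubic surface $\mathcal{X}$ entirely and is purely geometric, exploiting only that $\ell_{17}$ has constant rank $3$. First, each conic plane $\pi$ spans a hyperplane $H_\pi=\langle\pi,\ell_{17}\rangle\in\cH_1\cup\cH_{2,r}$; counting flags $(\pi,H_\pi)$ gives $h_1+2h_{2,r}=q^2+q+1$, so $h_1$ is odd. The key step, which your sketch is missing, is the argument that $h_1\leq 1$: if two distinct hyperplanes of $\cH_1$ contained $\ell_{17}$, their intersection would be a solid containing both $\ell_{17}$ and the tangent plane $\kappa$ of $\cV(\bF_q)$ at the common point of the two conics, forcing $\ell_{17}\cap\kappa\neq\varnothing$; but $\kappa$ has no rank-$3$ points, contradicting $OD_0(\ell_{17})=[0,0,0,q+1]$. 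Hence $h_1=1$ and $h_{2,r}=\tfrac{q^2+q}{2}$. Then a tangent-plane flag count (as you suggest, in the style of $o_{14,2}$ and $o_{15,2}$) gives $(q+1)+\tfrac{q^2+q}{2}+h_{2,i}=q^2+q+1$, so $h_{2,i}=\tfrac{q^2-q}{2}$, and $h_3=q^3+q$ follows last by subtraction. Your main plan via the cubic surface is in principle valid but, as you acknowledge, the parameters $\alpha,\beta,\gamma$ (subject to $\lambda^3+\gamma\lambda^2-\beta\lambda+\alpha\neq 0$ for all $\lambda\in\bF_q$) make the point count and the rank-$2$ split substantially harder than in the $o_{13}$/$o_{14}$ cases; the geometric argument bypasses all of this and works uniformly for both parities.
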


\begin{proof}
Each conic plane $\pi$ of $\cV(\bF_q)$ determines a hyperplane $H_\pi=\langle \pi,\ell_{17}\rangle$, which belongs to $\cH_1\cup\cH_{2,r}$. Since there are $q^2+q+1$ such hyperplanes $H_\pi$, counting flags $(\pi,H_\pi)$ we have that $h_1+2h_{2,r}=q^2+q+1$. This implies that $h_1\geq 1$ and odd. We show that $h_1=1$. Consider two hyperplanes $H_{\pi}, H_{\pi'} \in \cH_1$. If $H_{\pi}$ and $H_{\pi'}$ are distinct, then they intersect in a solid $S$ which contains the tangent plane $\kappa$ of $\cV(\bF_q)$ at the point $\pi\cap \pi'$. This  is a contradiction, since $\ell_{17}$ and $\kappa$ are both contained in the solid $S$, so they would have to meet. But $\ell_{17}$ is a constant rank 3 line, and $\kappa$ does not contain any points of rank 3. Therefore $H_{\pi}=H_{\pi'}$. This shows that $h_1\leq 1$, and by the above $h_1=1$, and $h_{2,r}=\frac{q^2+q}{2}$.

Similarly, each tangent plane $\pi$ of $\cV(\bF_q)$ determines a hyperplane $H_\pi=\langle \pi,\ell_{17}\rangle$.
Such a hyperplane belongs to $\cH_1\cup\cH_{2,r}\cup \cH_{2,i}$, since a tangent plane $\pi_P$ of $\cV(\bF_q)$ at a point $P$ intersects a hyperplane $H\in \cH_3$ in the tangent line at the point $P$ to the normal rational curve $H\cap \cV(\bF_q)$ of degree 4  in $H$.

By the first part of the proof, exactly one of the hyperplanes $H_\pi=\langle \pi,\ell_{17}\rangle$, with $\pi$ a tangent plane of $\cV(\bF_q)$, belongs to $\cH_1$, and $\frac{q^2+q}{2}$ belong to $\cH_{2,r}$. 
Since a hyperplane in $\cH_{2,r}\cup \cH_{2,i}$ contains exactly one tangent plane of $\cV(\bF_q)$, 
counting flags $(\rho,H_\pi)$, where $\rho$ and $\pi$ are tangent planes of $\cV(\bF_q)$, one obtains
$$
(q+1)+\frac{q^2+q}{2}+h_{2,i}=q^2+q+1.
$$
This shows that $h_{2,i}=\frac{q^2-q}{2}$, which completes the proof.
\end{proof}

\begin{Theorem}

The number of lines of type $o_i$ in a fixed $H\in\cH_j $ is  
$$\frac{|o_{i}|\times h_j}{|\cH_j|}.$$

\end{Theorem}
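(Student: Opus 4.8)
The plan is to prove this by a single flag-counting (double-counting) argument, using that both $o_i$ and $\mathcal{H}_j$ are \emph{single} $K$-orbits. Fix the indices $i$ and $j$ and introduce the incidence set
\[
\mathcal{F}_{i,j}=\{(\ell,H)\ :\ \ell\text{ a line of }\PG(5,q)\text{ of type }o_i,\ H\text{ a hyperplane in }\mathcal{H}_j,\ \ell\subseteq H\}.
\]
I would compute $|\mathcal{F}_{i,j}|$ in two ways and compare.

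\textbf{Counting over lines.} For a line $\ell$ of type $o_i$, the number of hyperplanes of $\mathcal{H}_j$ through $\ell$ is precisely the $j$-th entry of the hyperplane-orbit distribution $OD_4(\ell)$. Since $K$ acts transitively on $o_i$ and permutes the hyperplane-orbits $\mathcal{H}_1,\mathcal{H}_{2,r},\mathcal{H}_{2,i},\mathcal{H}_3$ among themselves — indeed fixing each of them setwise, as they are $K$-orbits by definition — this entry is independent of the chosen $\ell\in o_i$; write it $h_j$ (the value recorded earlier in this section and in Theorem \ref{lines}'s companion tables). Hence $|\mathcal{F}_{i,j}|=|o_i|\,h_j$.

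\textbf{Counting over hyperplanes.} For $H\in\mathcal{H}_j$ let $n(H)$ be the number of lines of type $o_i$ contained in $H$. Any $g\in K$ is a collineation mapping $o_i$ onto $o_i$ and $\mathcal{H}_j$ onto $\mathcal{H}_j$, hence it restricts to a bijection between the lines of type $o_i$ in $H$ and those in $gH$, so $n(H)=n(gH)$. As $K$ is transitive on $\mathcal{H}_j$, the value $n(H)$ does not depend on $H\in\mathcal{H}_j$; call it $N$. Then $|\mathcal{F}_{i,j}|=|\mathcal{H}_j|\,N$. Equating the two expressions yields $N=|o_i|\,h_j/|\mathcal{H}_j|$, which is the assertion; note also that $|o_i|$, $|\mathcal{H}_j|$ and $h_j$ are exactly the quantities tabulated earlier, so the right-hand side is fully explicit, and $|\mathcal{H}_j|$ automatically divides $|o_i|\,h_j$ since the left-hand side is an integer.

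\textbf{Main obstacle.} There is essentially no analytic difficulty: the entire content is the two invariance statements, that $OD_4(\ell)$ depends only on the $K$-orbit of $\ell$ and that $n(H)$ depends only on the $K$-orbit of $H$. Both follow immediately from $K$ acting on $\PG(5,q)$ and preserving the orbit structure of lines (Theorem \ref{lines}) and of hyperplanes (established in the preliminaries). The one point worth flagging explicitly is that the statement is only \emph{well-posed} — ``the number of lines of type $o_i$ in a fixed $H$'' being independent of $H$ — precisely because of the transitivity of $K$ on $\mathcal{H}_j$; this is the step that must be spelled out rather than the arithmetic.
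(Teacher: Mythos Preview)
Your proof is correct and follows exactly the same approach as the paper: a double count of flags $(L,H)$ with $L\in o_i$, $H\in\mathcal{H}_j$, $L\subseteq H$, using that $h_j$ is constant on $o_i$ and that the number of $o_i$-lines in $H$ is constant on $\mathcal{H}_j$. Your write-up is in fact more careful than the paper's, which states the flag equation without explicitly justifying the well-posedness step you highlighted.
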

\begin{proof}
    The hyperplane-orbit distribution of a line of type $o_{i}$ implies that the number of lines of type $o_{i}$ in a hyperplane of $\cH_j$ can be determined by counting flags $(L,H)$ where $L$ is of type $o_{i}$ and $H\in \cH_j$:

$$|o_{i}|\times h_j=|\cH_j|\times (number\; of\; lines\; of \; type\; o_{i}\; in\; a\; fixed\; H\in\cH_j).$$

\end{proof}

\begin{Theorem}
The line-orbit distributions of the hyperplane-orbits in $\PG(5,q)$, $q$ odd, are as in Table \ref{OD2(H)odd}.

\begin{table}[h!]
\centering
\scriptsize{
\begin{tabular}{ccccc}
\toprule
\multicolumn{1}{c}{Orbits} & \multicolumn{1}{c}{\vspace{1pt}$\cH_1$} & \multicolumn{1}{c}{\vspace{1pt}$\cH_{2,r}$} & \multicolumn{1}{c}{\vspace{1pt}$\cH_{2,i}$} & \multicolumn{1}{c}{\vspace{1pt}$\cH_3$} \\
\midrule\vspace{3pt}
$o_5$                      & $\frac{1}{2}q(q+1)$                     & $2q^2+q$                                    & $0$                                         & $\frac{1}{2}q(q+1)$                     \\\vspace{3pt}
$o_6$                      & $(q+1)^2$                               & $3q+1$                                      & $q+1$                                       & $q+1$                                   \\\vspace{3pt}
$o_{8,1}$                  & $q^3(q+1)$                              & $\frac{1}{2}q^2(2q^2+3q-1)$                 & $\frac{1}{2}q^2(q+1)$                       & $\frac{1}{2}q^2(q+1)^2$                 \\\vspace{3pt}
$o_{8,2}$                  & $0$                                     & $\frac{q^2(q-1)(2q^2+3q+1)}{2(q+1)}$        & $\frac{1}{2}q^2(q+1)$                       & $\frac{1}{2}q^2(q^2-1)$                 \\\vspace{3pt}
$o_9$                      & $q(q^2-1) $                             & $2q(q^2-1)$                                 & $0$                                         & $q^2(q+1)$                              \\\vspace{3pt}
$o_{10}$                   & $\frac{1}{2}q(q-1)$                     & $q(q-1)$                                    & $q^2$                                       & $\frac{1}{2}q(q-1)$                     \\\vspace{3pt}
$o_{12,1}$                 & $q+2$                                   & $\frac{2q^2+q-1}{q(q+1)}$                   & $\frac{2q^2-q-1}{q(q-1)}$                   & $1$                                     \\\vspace{3pt}
$o_{13,1}$                 & $\frac{3}{2}q^3(q^2-1)$                 & $\frac{1}{2}q^2(q-1)(q^2+3q-2)$             & $\frac{1}{2}q^2(q+1)(q^2+q-2)$              & $\frac{1}{2}q^2(q+1)(q^2-1)$            \\\vspace{3pt}
$o_{13,2}$                 & $\frac{1}{2}q^3(q^2-1)$                 & $\frac{1}{2}q^3(q-1)(q+3)$                  & $\frac{1}{2}q^3(q+1)^2$                     & $\frac{1}{2}q^2(q+1)(q^2-1)$            \\\vspace{3pt}
$o_{14,1}$                 & $\frac{1}{6}q^3(q-1)(q^2-1)$            & $\frac{1}{24}q^2(q-1)^2(q^2+4q-3)$          & $\frac{1}{24}q^2(q^2-1)(q^2+2q-3)$          & $\frac{1}{24}q^2(q^2-1)(q^2-2)$         \\\vspace{3pt}
$o_{14,2}$                 & $0$                                     & $\frac{1}{8}q^2(q-1)^2(q^2+4q+1)$           & $\frac{1}{8}q^2(q^2-1)(q^2+2q+1)$           & $\frac{1}{8}q^2(q^2-1)(q^2-2)$          \\\vspace{3pt}
$o_{15,1}$                 & $\frac{1}{2}q^3(q-1)(q^2-1)$            & $\frac{1}{4}q^2(q-1)^2(q^2+2q-1)$           & $\frac{1}{4}q^2(q^2-1)^2$                   & $\frac{1}{4}q^4(q^2-1)$                 \\\vspace{3pt}
$o_{15,2}$                 & $0$                                     & $\frac{1}{4}q^2(q-1)^2(q^2+2q+1)$           & $\frac{1}{4}q^2(q^4-1)$                     & $\frac{1}{4}q^4(q^2-1)$                 \\\vspace{3pt}
$o_{16,1}$                 & $2q^2(q^2-1)$                           & $q(q-1)(q^2+2q-1)$                          & $q(q+1)(q^2-1)$                             & $q^3(q+1)$                              \\\vspace{3pt}
$o_{17}$                   & $\frac{1}{3}q^3(q-1)(q^2-1)$            & $\frac{1}{3}q^3(q-1)(q^2-1)$                & $\frac{1}{3}q^3(q-1)(q^2-1)$                & $\frac{1}{3}q^2(q^4-1)$  \\    
\bottomrule          
\end{tabular}

\caption{\label{OD2(H)odd}Line-orbits distributions of hyperplanes in $\PG(5,q)$, $q$ odd.} }
\end{table}

\end{Theorem}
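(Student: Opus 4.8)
The plan is to compute, for each line orbit $o_i$ (there are $15$ of them for $q$ odd) and each hyperplane orbit $\cH_j$, the number $N(i,j)$ of lines of type $o_i$ contained in a fixed hyperplane of $\cH_j$, using the formula from the preceding theorem,
\[
N(i,j)=\frac{|o_i|\cdot h_j(\ell_i)}{|\cH_j|},
\]
where $h_j(\ell_i)$ is the $j$-th entry of the hyperplane-orbit distribution $OD_4(\ell_i)$. All of the ingredients are already in place: the values $h_j(\ell_i)$ are exactly the entries of Tables \ref{tableoflinesodd}'s companion (the lemmas of this section, collected in the webs tables), and the orbit sizes $|\cH_j|$ are recalled at the start of the proof of Theorem \ref{thm:hyporbdist_pts_odd}: $|\cH_1|=q^2+q+1$, $|\cH_{2,r}|=q(q+1)(q^2+q+1)/2$, $|\cH_{2,i}|=q(q-1)(q^2+q+1)/2$, $|\cH_3|=q^5-q^2=q^2(q^3-1)=q^2(q-1)(q^2+q+1)$. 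So the only remaining task is to assemble the $15$ line-orbit sizes $|o_i|$ and carry out the $15\times 4$ arithmetic simplifications, checking that each quotient is a nonnegative integer and matches the corresponding cell of Table \ref{OD2(H)odd}.

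First I would list $|o_i|$ for $q$ odd from Table \ref{tableoflinesodd} (equivalently, since $|o_i|=|K|/|\mathrm{Stab}_K(\ell_i)|$ with $|K|=|\PGL(3,q)|=q^3(q^3-1)(q^2-1)$, these can also be recovered or double-checked from the stabiliser orders). Then, row by row, I would substitute into the displayed formula. For instance, for $o_5$: $h_1(\ell_5)=1$, $|o_5|=\tfrac12 q(q+1)(q^2+q+1)$ (the secant lines through two rank-$1$ points of $\cV$), giving $N(5,1)=\tfrac12 q(q+1)$; similarly $h_3(\ell_5)=q^3-q^2$ gives $N(5,3)=\tfrac12 q(q+1)(q^2+q+1)(q^3-q^2)/(q^2(q-1)(q^2+q+1))=\tfrac12 q(q+1)$, matching the table. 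The entries for $o_{12,1}$ look anomalous (the table records $\tfrac{2q^2+q-1}{q(q+1)}$ and $\tfrac{2q^2-q-1}{q(q-1)}$, which simplify to $\tfrac{2q-1}{q}$ and $\tfrac{2q+1}{q}$ — these are clearly meant to be read together with $|o_{12,1}|$ small, so I would present that computation carefully to confirm the intended simplified integer values). A sanity cross-check at the end: for each fixed $\cH_j$, the numbers $N(i,j)$ should account (with appropriate multiplicities) for all $\binom{6}{2}_q=(q^4+q^3+q^2+q+1)(q^2+q+1)$ lines of the corresponding hyperplane $\PG(4,q)$, i.e. $\sum_i N(i,j)=(q^4+q^3+q^2+q+1)(q^2+q+1)$; verifying this identity for each of the four columns is the cleanest global consistency check.

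The main obstacle is not conceptual but bookkeeping: one must have the correct orbit sizes $|o_i|$ and the correct $OD_4(\ell_i)$ vectors, and then the algebra of simplifying ratios of degree-$\le 8$ polynomials in $q$ must be done without slips, while also confirming integrality (which is automatic from the flag-counting interpretation, but provides a useful check). The genuinely delicate cases are the orbits whose hyperplane-orbit distribution was itself obtained by a cubic-surface point count ($o_{8,1}$, $o_{13,1}$, $o_{13,2}$, $o_{14,1}$, $o_{14,2}$): here one should double-check that the $h_j$ values used are the final corrected ones from the respective lemmas. Beyond that, the proof is a direct application of the counting theorem together with the data already established, so I would phrase it as: "Apply the preceding theorem with the orbit sizes $|o_i|$ and the hyperplane-orbit distributions $OD_4(\ell_i)$ established above; the resulting quotients $|o_i|h_j/|\cH_j|$ simplify to the entries of Table \ref{OD2(H)odd}, as one verifies by a routine computation, and the row sums $\sum_i N(i,j)$ equal the total number of lines in a $\PG(4,q)$, confirming the result."
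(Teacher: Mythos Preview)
Your approach is correct and is essentially the paper's own (implicit) proof: the theorem is stated immediately after the counting formula $N(i,j)=|o_i|\,h_j/|\cH_j|$, and the table is obtained by direct substitution of the orbit sizes and the hyperplane-orbit distributions established in the preceding lemmas. One small slip: the total number of lines in a hyperplane $\PG(4,q)$ is $\binom{5}{2}_q=(q^4+q^3+q^2+q+1)(q^2+1)$, not $(q^4+q^3+q^2+q+1)(q^2+q+1)$; and your instinct about the $o_{12,1}$ row is well founded, since $(2q^2+q-1)/(q(q+1))=(2q-1)/q$ is not an integer, so those two table entries do not arise from the formula as printed.
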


\begin{Theorem}
The line-orbit distributions of the hyperplane-orbits in $\PG(5,q)$, $q$ even, are as in Table \ref{OD2(H)even}.

\begin{table}[h!]
\centering
\scriptsize{
\begin{tabular}{ccccc}
\toprule
\multicolumn{1}{c}{Orbits} & \multicolumn{1}{c}{\vspace{1pt}$\cH_1$} & \multicolumn{1}{c}{\vspace{1pt}$\cH_{2,r}$} & \multicolumn{1}{c}{\vspace{1pt}$\cH_{2,i}$} & \multicolumn{1}{c}{\vspace{1pt}$\cH_3$} \\

\midrule\vspace{3pt}
$o_5$                      & $\frac{1}{2}q(q+1)$                     & $2q^2+q$                                    & $0$                                         & $\frac{1}{2}q(q+1)$                     \\\vspace{3pt}
$o_6$                      & $(q+1)^2$                               & $3q+1$                                      & $q+1$                                       & $q+1$                                   \\\vspace{3pt}
$o_{8,1}$                  & $q^2(q^2-1)$                            & $(2q+3)(q-1)q^2$                            & $q^2(q+1)$                                  & $q(q+1)(q^2-1)$                         \\\vspace{3pt}
$o_{8,3}$                  & $q^2(q+1)$                              & $2q^2$                                      & $0$                                         & $q(q+1)$                                \\\vspace{3pt}
$o_9$                      & $q(q^2-1) $                             & $2q(q^2-1)$                                 & $0$                                         & $q^2(q+1)$                              \\\vspace{3pt}
$o_{10}$                   & $\frac{1}{2}q(q-1)$                     & $q(q-1)$                                    & $q^2$                                       & $\frac{1}{2}q(q-1)$                     \\\vspace{3pt}
$o_{12,1}$                 & $q^2+q+1$                               & $1$                                         & $1$                                         & $1$                                     \\\vspace{3pt}
$o_{12,3}$                 & $(q+1)(q^2-1)$                          & $(q-1)(2q+1)$                               & $(q+1)(2q-1)$                               & $q^2-1$                                 \\\vspace{3pt}
$o_{13,1}$                 & $q^2(q+1)(q^2-1)$                       & $q^2(q-1)(q+2)$                             & $q^3(q+1)$                                  & $q(q+1)(q^2-1)$                         \\\vspace{3pt}
$o_{13,3}$                 & $q^2(q-1)(q^2-1)$                       & $q^2(q+3)(q-1)^2$                           & $q^2(q+1)(q^2-1)$                           & $q(q^2-1)^2$                            \\\vspace{3pt}
$o_{14,1}$                 & $\frac{1}{6}q^3(q-1)(q^2-1)$            & $\frac{1}{6}q^3(q-1)^2(q+4)$                & $\frac{1}{6}q^3(q^2-1)(q+2)$                & $\frac{1}{6}q^2(q^2-1)(q^2-2)$          \\\vspace{3pt}
$o_{15,1}$                 & $\frac{1}{2}q^3(q-1)(q^2-1)$            & $\frac{1}{2}q^3(q-1)^2(q+2)$                & $\frac{1}{2}q^4(q^2-1)$                     & $\frac{1}{2}q^4(q^2-1)$                 \\\vspace{3pt}
$o_{16,1}$                 & $q(q+1)(q^2-1)$                         & $q(q^2-1)$                                  & $q(q^2-1)$                                  & $q^2(q+1)$                              \\\vspace{3pt}
$o_{16,3}$                 & $q(q-1)(q^2-1)$                         & $q(q-1)^2(q+2)$                             & $q^2(q^2-1)$                                & $q^2(q^2-1)$                            \\\vspace{3pt}
$o_{17}$                   & $\frac{1}{3}q^3(q-1)(q^2-1)$            & $\frac{1}{3}q^3(q-1)(q^2-1)$                & $\frac{1}{3}q^3(q-1)(q^2-1)$                & $\frac{1}{3}q^2(q^4-1)$                \\
\bottomrule
\end{tabular}
\caption{\label{OD2(H)even}Line-orbits distributions of hyperplanes in $\PG(5,q)$, $q$ even.} }
\end{table}

\end{Theorem}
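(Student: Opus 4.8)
The plan is to deduce every entry of Table~\ref{OD2(H)even} from the flag-counting identity just established, according to which the number of lines of type $o_i$ contained in a fixed hyperplane $H\in\cH_j$ equals $|o_i|\,h_j(\ell_i)/|\cH_j|$, where $h_j(\ell_i)$ denotes the $j$-th coordinate of the hyperplane-orbit distribution $OD_4(\ell_i)$ of a line of type $o_i$ (this is well defined, being a $K$-invariant, hence constant on the orbit $o_i$). Since $K$ acts transitively on $\cH_j$, this quantity does not depend on the choice of $H\in\cH_j$, so it is a genuine invariant of the orbit $\cH_j$; it is precisely the $(i,j)$-entry of the table.

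First I would assemble the three ingredients, all of which are already available. (1)~The complete list of $K$-orbits of lines of $\PG(5,q)$ for $q$ even, from Theorem~\ref{lines} and Table~\ref{tableoflineseven}, namely $o_5$, $o_6$, $o_{8,1}$, $o_{8,3}$, $o_9$, $o_{10}$, $o_{12,1}$, $o_{12,3}$, $o_{13,1}$, $o_{13,3}$, $o_{14,1}$, $o_{15,1}$, $o_{16,1}$, $o_{16,3}$, $o_{17}$, together with the cardinalities $|o_i|$ recorded in that table. (2)~The hyperplane-orbit distributions $OD_4(\ell_i)=[h_1,h_{2,r},h_{2,i},h_3]$ for $q$ even, supplied by the Lemmas of this section for each of these fifteen types. (3)~The orbit sizes $|\cH_1|=q^2+q+1$, $|\cH_{2,r}|=\frac{1}{2} q(q+1)(q^2+q+1)$, $|\cH_{2,i}|=\frac{1}{2} q(q-1)(q^2+q+1)$ and $|\cH_3|=q^5-q^2$, from the preliminaries.

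Then, for each of the $15\times 4$ pairs $(o_i,\cH_j)$, I would substitute into $|o_i|\,h_j(\ell_i)/|\cH_j|$ and simplify; the result is the corresponding cell of Table~\ref{OD2(H)even}. Finally I would run the usual consistency checks. Each expression obtained must be a non-negative integer (after the common factors cancel it is a polynomial in $q$). For each fixed column $\cH_j$, the column sum $\sum_i |o_i|\,h_j(\ell_i)/|\cH_j|$ over all fifteen line types must equal the number of lines contained in a hyperplane of $\PG(5,q)$, that is, the number of lines of $\PG(4,q)$, which is $(q^2+1)(q^4+q^3+q^2+q+1)$. Dually, multiplying the $(i,j)$-entry by $|\cH_j|$ and summing over $j$ must recover $|o_i|\cdot\sum_j h_j(\ell_i)=|o_i|(q^3+q^2+q+1)$, the number of conics in a web. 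These checks, together with the fact that the fifteen orbits of Theorem~\ref{lines} exhaust all lines, confirm the table.

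The argument is therefore essentially bookkeeping; the conceptual work is done by the flag-counting identity, and no new geometry enters. The points that need care are (a)~using \emph{exactly} the even-characteristic list of line orbits --- the four orbits $o_{8,2}$, $o_{13,2}$, $o_{14,2}$, $o_{15,2}$ that occur for $q$ odd do not exist for $q$ even, while $o_{8,3}$, $o_{12,3}$, $o_{13,3}$, $o_{16,3}$ now appear --- and (b)~the line types whose even-characteristic hyperplane-orbit distribution differs qualitatively from the odd one, most notably $o_{12,1}$ (whose line lies in \emph{every} hyperplane of $\cH_1$, which produces the entry $q^2+q+1$ in the first column) and $o_{16,1}$. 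For these one must feed in the $q$-even values of $h_j(\ell_i)$ from the relevant Lemmas; once this is done, every cell of Table~\ref{OD2(H)even} follows by the substitution above.
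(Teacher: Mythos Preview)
Your proposal is correct and is essentially the paper's own approach: the preceding flag-counting theorem gives $|o_i|\,h_j/|\cH_j|$ for each entry, and the table is obtained by substitution. One minor inaccuracy: the orbit sizes $|o_i|$ are not actually recorded in Table~\ref{tableoflineseven} (which lists only representatives and point-orbit distributions); they must be taken from \cite{lines} or computed via the orbit--stabiliser theorem, but this does not affect the validity of the argument.
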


We end this section by the following conclusions.

\begin{Theorem}\label{webs1}
The hyperplane-orbit distribution serves as an almost complete invariant for projectively inequivalent webs of conics in $\PG(2,q)$, with the exception of cases $\mathcal{W}_{15,1}$ and $\mathcal{W}_{16,1}$ for odd $q$, and $\mathcal{W}_{15,1}$ and $\mathcal{W}_{16,3}$ for even $q$. In these cases, we use Lemma \ref{lineswithsameOD} or \cite[Lemma 1]{T233paper} to differentiate between the respective orbits.
\end{Theorem}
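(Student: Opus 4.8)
The plan is to assemble the hyperplane-orbit distributions of all $15$ line-orbits, which have been computed one by one in the Lemmas of this section, into a single list, and to inspect that list for collisions. Concretely, I would first tabulate, for each orbit $o_i$, the $4$-tuple $OD_4(\ell_i)=[h_1,h_{2,r},h_{2,i},h_3]$ as proved above (and as recorded in Tables \ref{tablewebsodd} and \ref{tablewebseven}), separating the odd-$q$ and even-$q$ cases since several orbits ($o_{8,2}$, $o_{13,2}$, $o_{14,2}$, $o_{15,2}$ for $q$ odd; $o_{8,3}$, $o_{12,3}$, $o_{13,3}$, $o_{16,3}$ for $q$ even) only occur for one parity. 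The claim is then the purely bookkeeping statement that these tuples are pairwise distinct \emph{except} for the single coincidence $OD_4(\ell_{15,1})=OD_4(\ell_{16,1})$ when $q$ is odd and $OD_4(\ell_{15,1})=OD_4(\ell_{16,3})$ when $q$ is even.

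The second step is to verify these two coincidences directly: Lemmas \ref{o15,1} and \ref{o16,1} both give $[2,\frac{q^2-1}{2}+q,\frac{q^2-1}{2},q^3]$ for $q$ odd, and Lemmas \ref{o15,1} and \ref{o16,3} both give $[1,\frac{q^2}{2}+q,\frac{q^2}{2},q^3]$ for $q$ even, so the hyperplane-orbit distribution genuinely fails to separate these two pairs. To complete the argument one then invokes Lemma \ref{lineswithsameOD}: within each exceptional pair the two orbits are told apart by whether the unique hyperplane $H_{\cC(P)}\in\cH_1$ through $\cC(P)$ (with $\{P\}$ the unique rank-$2$ point on the line) contains the line or not; alternatively one may cite \cite[Lemma 1]{T233paper}. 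This shows that, augmented by this one extra bit of information on the exceptional pairs, the invariant becomes complete, which is exactly what the theorem asserts.

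The third step is the routine but slightly tedious part: checking that all other pairs of tuples really are distinct. Here it helps to sort first by $h_3$ (equivalently by the number of non-singular conics, which by Theorem \ref{webs2} / the closing remark of the section equals $q^3+iq$ where $q+i$ is the number of rank-$3$ points on the line, hence is already a strong separator), and then within each $h_3$-class to compare the remaining coordinates, typically $h_1$ (the number of double lines) suffices, and in the few remaining ties one uses $h_{2,i}$. One must be a little careful with small $q$: for instance for $q$ even several orbits have $h_3=q^3-q$ and one should confirm that $h_1$ together with $h_{2,i}$ still distinguishes $o_{8,1}$, $o_{13,1}$, $o_{13,3}$ (and for $q$ odd the cluster $o_{8,1},o_{8,2},o_{13,1},o_{13,2}$), but in each case the polynomials in $q$ are distinct as polynomials and the inequalities hold for all $q\ge 2$ (resp. all odd $q\ge 3$).

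**The main obstacle** is not conceptual but combinatorial completeness: one has to be certain that no pair of the $\approx 15$ tuples collides beyond the two exceptions, and that the polynomial identities/inequalities used to separate near-collisions hold for \emph{all} $q$ of the relevant parity rather than just generically — in particular one should double-check the boundary cases $q=2,3,4$ where low-degree terms can cause coincidences. Everything else reduces to reading off the already-proved Lemmas and Tables \ref{tableoflinesodd}–\ref{tablewebseven}, together with a citation of Lemma \ref{lineswithsameOD}.
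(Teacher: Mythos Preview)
Your proposal is correct and matches the paper's approach: the theorem is stated without an explicit proof, as it is a summary conclusion drawn from the hyperplane-orbit distributions computed in the preceding lemmas (collected in Tables \ref{tablewebsodd} and \ref{tablewebseven}) together with Lemma \ref{lineswithsameOD} for the exceptional pairs. Your plan to tabulate the $4$-tuples, sort by $h_3$ and then $h_1$, and invoke Lemma \ref{lineswithsameOD} for the single collision in each parity is exactly the intended argument.
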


\begin{Theorem}\label{webs2}

    A line $L$ in $\PG(5,q)$ intersects the secant variety in $i$ points if and only if its associated cubic surface has $q^2+ iq+1$ points, $i \in\{0,1,2,3,q+1\}.$

\end{Theorem}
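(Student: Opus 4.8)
The plan is to move everything into the Veronese ambient $\PG(5,q)$ and reduce the statement to one flag count, using the hyperplane‑orbit distributions of points already established.

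\textit{Setting up the dictionary.} Write $L=\ell_{\mathcal W}$ for the line dual to the web $\mathcal W=(f_1,f_2,f_3,f_4)$ (every line of $\PG(5,q)$ arises this way), so that the conics of $\mathcal W$ are in bijection, via $\delta$, with the $q^{3}+q^{2}+q+1$ hyperplanes through $L$, a \emph{non}-singular conic corresponding exactly to a hyperplane of $\cH_3$. Hence the points of the associated cubic surface $\cZ(\Delta_f)\subset\PG(3,q)$ — which are precisely the singular conics of $\mathcal W$ — number
\[
|\cZ(\Delta_f)| \;=\; q^{3}+q^{2}+q+1-h_3(L),
\]
where $h_3(L)$ is the number of hyperplanes of $\cH_3$ through $L$; in particular this count depends only on $L$. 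Next recall that the secant variety of $\cV(\Fq)$ is the determinantal cubic hypersurface $\{P:\det M_P=0\}$, i.e. the locus of points of rank at most $2$. Writing $j$ for the number of rank‑$3$ points of $L$, the number of points of $L$ on the secant variety is $i:=q+1-j$, and since $\det M_P$ restricts to $L$ as a binary cubic it either vanishes identically (then $i=q+1$) or has at most three roots on $L\cong\PG(1,q)$ (then $i\le 3$); this gives $i\in\{0,1,2,3,q+1\}$.

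\textit{The key count.} I would compute $h_3(L)$ in terms of $j$ by counting the pairs $(P,H)$ with $P\in L$, $H\in\cH_3$, $P\in H$ in two ways. A line of $\PG(5,q)$ meets every hyperplane, so each $H\in\cH_3$ contributes $q+1$ when $L\subseteq H$ (there are $h_3(L)$ such $H$) and $1$ otherwise; this gives $q\,h_3(L)+|\cH_3|$. Summing instead over $P\in L$ and using $h_3(P)=q^{4}$ for a rank‑$3$ point and $h_3(P)=q^{4}-q^{2}$ for a point of the secant variety — Theorems \ref{thm:hyporbdist_pts_odd} and \ref{thm:hyporbdist_pts_even}, together with the Corollary stating that every point of the secant variety lies in $q^4-q^2$ hyperplanes of $\cH_3$ — the sum equals $j\,q^{4}+(q+1-j)(q^{4}-q^{2})$. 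Equating the two and inserting $|\cH_3|=q^{5}-q^{2}$ yields $h_3(L)=q^{3}-q^{2}+jq$.

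\textit{Conclusion.} Substituting into the displayed formula and replacing $j$ by $q+1-i$,
\[
|\cZ(\Delta_f)| \;=\; q^{3}+q^{2}+q+1-(q^{3}-q^{2}+jq)\;=\;2q^{2}+q+1-jq\;=\;q^{2}+iq+1 .
\]
Since $i\mapsto q^{2}+iq+1$ is strictly increasing, this equality of cardinalities is equivalent to $L$ meeting the secant variety in exactly $i$ points, which is the claim. The only step needing care is the bookkeeping of the first paragraph: one must check that the correspondence ``singular conic of $\mathcal W$ $\leftrightarrow$ hyperplane through $L$ not in $\cH_3$'' holds uniformly in $q$ (including $q=2$ and even characteristic, where ``discriminant'' and ``rank'' follow their own conventions); everything after that is the self‑contained double count above, which could equally be replaced by a case‑by‑case check against the fifteen line‑orbit lemmas and the point‑orbit distributions of Tables \ref{tableoflinesodd}–\ref{tableoflineseven}.
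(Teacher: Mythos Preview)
Your argument is correct and takes a genuinely different route from the paper. The paper's proof is the single sentence ``This is a consequence of the hyperplane-orbit distributions of lines in $\PG(5,q)$'', i.e.\ it simply reads off, orbit by orbit, the value of $h_3(L)$ from the fifteen lemmas of Section~\ref{webs} and compares with the point-orbit distributions in Tables~\ref{tableoflinesodd}--\ref{tableoflineseven}. You instead bypass the entire case analysis with a single flag count: using only the \emph{point} results (Theorems~\ref{thm:hyporbdist_pts_odd} and~\ref{thm:hyporbdist_pts_even}, and the corollary that $h_3(P)=q^4-q^2$ for every point of the secant variety), you obtain $h_3(L)=q^3-q^2+jq$ uniformly, which also proves Corollary~\ref{webs3} directly rather than as a consequence of Theorem~\ref{webs2}. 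The advantage of your approach is conceptual economy and independence from the individual line-orbit computations; the advantage of the paper's is that those computations were needed anyway for Tables~\ref{tablewebsodd}--\ref{tablewebseven}, so the deduction is essentially free. Your caveat about the bookkeeping in even characteristic is not an issue: the paper defines $\Delta_f$ by the explicit polynomial formula and states that non-singularity is equivalent to $\Delta_f\neq 0$ for all $q$, so the identification of $|\cZ(\Delta_f)|$ with the number of hyperplanes through $L$ outside $\cH_3$ is exact in all cases.
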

\begin{proof}
   This is a consequence of the hyperplane-orbit distributions of lines in $\PG(5,q)$. 
\end{proof}

 The subsequent corollaries can be readily derived from Theorem  \ref{webs2}.

\begin{Corollary}\label{webs3}
 A line $L$ in $\PG(5,q)$ having $q+ i$ points of rank $3$ lies in $q^3+ iq=q(q^2+i)$ hyperplanes in $\cH_3$. In particular, $i\in\{-q,-2,-1,0,1\}$.
\end{Corollary}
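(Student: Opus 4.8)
The plan is to deduce the statement directly from Theorem \ref{webs2} together with the dictionary between hyperplanes of $\PG(5,q)$ and conics of $\PG(2,q)$. Fix a line $L$ of $\PG(5,q)$ and write $\mathcal W$ for its associated web of conics, so that $L=\ell_{\mathcal W}$. Recall from Section \ref{pre} and Section \ref{webs} that the $q^3+q^2+q+1$ hyperplanes of $\PG(5,q)$ through $L$ are in bijection, via $\delta^{-1}$, with the $q^3+q^2+q+1$ conics of $\mathcal W$, and that under this bijection a hyperplane lies in the orbit $\cH_3$ exactly when the corresponding conic is non-singular. Hence $h_3(L)$ equals the number of non-singular conics of $\mathcal W$, that is $(q^3+q^2+q+1)-|\cZ(\Delta_f)|$, since the singular conics of $\mathcal W$ are precisely the $\bF_q$-points of the cubic surface $\cZ(\Delta_f)\subseteq\PG(3,q)$ associated with $L$.

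Next I would count the points of $L$ lying on the secant variety of $\cV(\Fq)$, which is the locus of points of $\PG(5,q)$ of rank at most two (the convention used earlier, e.g. in the corollary following Theorem \ref{thm:hyporbdist_pts_even}). The line $L$ carries $q+1$ points in total, of which $q+i$ have rank $3$; hence exactly $(q+1)-(q+i)=1-i$ of them lie on the secant variety. Substituting this into Theorem \ref{webs2} gives $|\cZ(\Delta_f)|=q^2+(1-i)q+1$, and therefore
\[
h_3(L)=(q^3+q^2+q+1)-\bigl(q^2+(1-i)q+1\bigr)=q^3+iq=q(q^2+i),
\]
which is the asserted formula. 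The admissible values of $i$ are obtained the same way: Theorem \ref{webs2} permits the number of secant points of $L$ to be only $0,1,2,3$ or $q+1$, so $1-i\in\{0,1,2,3,q+1\}$, i.e. $i\in\{-q,-2,-1,0,1\}$.

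I do not expect a genuine obstacle here: once Theorem \ref{webs2} is available the argument is pure bookkeeping. The only points requiring care are the sign conventions — translating ``$q+i$ points of rank $3$'' into ``$1-i$ points on the secant variety'' and matching the latter against the parameter occurring in Theorem \ref{webs2} — and being explicit that ``secant variety'' here means the set of points of rank at most two. As a consistency check the formula matches the $h_3$-columns of the per-orbit Lemmas of Section \ref{webs}; for instance the constant-rank-$3$ line $\ell_{17}$ has $i=1$ and indeed $h_3=q^3+q=q(q^2+1)$, while a line with a single point of rank at most two (such as $\ell_9$) has $i=0$ and $h_3=q^3$.
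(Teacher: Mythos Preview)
Your proposal is correct and follows essentially the same approach as the paper, which simply states that the corollary is readily derived from Theorem \ref{webs2}; you have just written out the bookkeeping explicitly, translating ``$q+i$ rank-$3$ points'' into ``$1-i$ points on the secant variety'' and subtracting the size of the cubic surface from the total number $q^3+q^2+q+1$ of hyperplanes through $L$.
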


\begin{Corollary}\label{rank3pts}
Points of rank at most $2$ on a line in $\PG(5,q)$ entirely characterize the singularity of the corresponding web of conics in $\PG(2,q)$.
\end{Corollary}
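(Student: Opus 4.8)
The plan is to obtain the statement as a direct corollary of Corollary~\ref{aux5}, using only the trivial fact that the rank statistics of the $q+1$ points of a line of $\PG(5,q)$ sum to $q+1$. I would first recall the dictionary established in this section: a web $\mathcal W$ of conics in $\PG(2,q)$ is the same datum as a line $L=\ell_{\mathcal W}$ of $\PG(5,q)$; the $q^3+q^2+q+1$ conics of $\mathcal W$ correspond bijectively to the hyperplanes of $\PG(5,q)$ through $L$; and under this bijection the double lines, pairs of real lines, pairs of conjugate lines, and non-singular conics of $\mathcal W$ are exactly the hyperplanes through $L$ lying in $\cH_1$, $\cH_{2,r}$, $\cH_{2,i}$, and $\cH_3$ respectively (equivalently, the singular conics of $\mathcal W$ are the $\bF_q$-rational points of the associated cubic surface $\cZ(\Delta_f)\subset\PG(3,q)$). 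Hence the ``singularity of $\mathcal W$'' --- the number of singular conics it contains, split according to projective type --- is precisely recorded by $OD_4(L)=[h_1,h_{2,r},h_{2,i},h_3]$.

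Next I would observe that the four entries of the point-orbit distribution $OD_0(L)=[r_1,r_{2,e},r_{2,i},r_3]$ for $q$ odd, or $[r_1,r_{2,n},r_{2,s},r_3]$ for $q$ even, sum to $q+1$, because $L$ has exactly $q+1$ points. Consequently the three entries that count the points of $L$ of rank at most $2$ already determine $r_3$, and therefore the whole tuple $OD_0(L)$. By Corollary~\ref{aux5}, $OD_0(L)$ in turn determines $OD_4(L)$. Composing these two implications yields precisely the assertion: the list of $K$-orbits of the rank-$\le2$ points of $L$, counted with multiplicity, determines $[h_1,h_{2,r},h_{2,i},h_3]$, that is, it determines how many conics of $\mathcal W$ are double lines, pairs of real lines, pairs of conjugate lines, and non-singular conics.

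For the coarser reading, in which one only wants the number of singular (equivalently non-singular) conics of $\mathcal W$, one can argue even more directly from Theorem~\ref{webs2}: the points of $L$ of rank at most $2$ are exactly the points where $L$ meets the secant variety of $\cV(\bF_q)$, so if there are $s$ of them, then Theorem~\ref{webs2} gives that $\cZ(\Delta_f)$ has $q^2+sq+1$ rational points, $\mathcal W$ contains $q^2+sq+1$ singular conics, and $h_3=(q^3+q^2+q+1)-(q^2+sq+1)=q^3+(1-s)q$, recovering Corollary~\ref{webs3}.

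There is no real computational obstacle here; the one point that genuinely needs care --- and which I would state explicitly in order to keep the deduction from Corollary~\ref{aux5} airtight --- is that $OD_0$ is not a complete invariant for lines: the orbit pairs $\{o_{15,1},o_{16,1}\}$ for $q$ odd and $\{o_{15,1},o_{16,3}\}$ for $q$ even share the same point-orbit distribution. What saves the argument is that, by Lemmas~\ref{o15,1}, \ref{o16,1} and \ref{o16,3} (see also Lemma~\ref{lineswithsameOD}), each of these coincidentally-distributed pairs also has the same hyperplane-orbit distribution, so the assignment sending the rank-$\le2$ point data of $L$ to the singularity type of $\mathcal W$ is well defined on every line of $\PG(5,q)$.
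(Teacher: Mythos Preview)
Your argument is correct. The paper's own derivation is the minimal one you label the ``coarser reading'': Corollary~\ref{rank3pts} is placed immediately after Theorem~\ref{webs2} and Corollary~\ref{webs3} with the remark that both follow readily from Theorem~\ref{webs2}; there the ``singularity'' of the web is read as the total number of singular conics (equivalently the number of $\bF_q$-points on the associated cubic surface), and that number is fixed by $|L\cap(\text{secant variety})|$, i.e.\ by the number of rank-$\le 2$ points on $L$. Your main route via Corollary~\ref{aux5} is a genuinely finer argument: by reconstructing $OD_0(L)$ from the rank-$\le 2$ point data (using $r_3=q+1-r_1-r_{2,*}-r_{2,*}$) and then invoking Corollary~\ref{aux5}, you recover not merely $h_3$ but the full tuple $[h_1,h_{2,r},h_{2,i},h_3]$. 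This buys a stronger conclusion---the rank-$\le 2$ data fixes the \emph{type-by-type} count of singular conics, not just their total---at the cost of depending on the orbit-by-orbit computations behind Corollary~\ref{aux5} rather than only on Theorem~\ref{webs2}. Your closing caveat about the exceptional pairs $\{o_{15,1},o_{16,1}\}$ (odd $q$) and $\{o_{15,1},o_{16,3}\}$ (even $q$) is exactly the content of the proof of Corollary~\ref{aux5}, so it is already subsumed once you cite that corollary.
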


\section{Conclusion}
This paper utilizes the combinatorial and geometric properties of the quadric Veronesean in $\PG(5,q)$ to determine the number of different types of conics contained in squabs and webs of
conics in $\PG(2, q)$. These numbers serve as complete invariants to distinguish between the projective equivalence classes of these linear systems of conics. Our results contribute towards the solution of the longstanding open problem of classifying the projective equivalence classes of linear systems of conics over finite fields initiated by L. E. Dickson  \cite{dickson} in 1908.

\section*{Acknowledgements}

The first author acknowledges the support of {\it The Croatian Science Foundation}, project number 5713.

\newpage 
\begin{appendices}   

In this appendix we collect a summary of our main results in Tables \ref{tablewebsodd} and  \ref{tablewebseven}. We gather in Tables \ref{tableoflinesodd} and \ref{tableoflineseven} representatives of the $K$-orbits of lines and their associated point-orbit distributions from \cite{lines}. In Table \ref{solidsodd}, we list representatives of the $15$ $K$-orbits of solids in $\PG(5,q)$, $q$ odd.

\begin{table}[!htbp]
\begin{center}
\scriptsize
\begin{tabular}[!htbp]{ l l l} 
 \hline
$L^K$&Representatives& $OD_0(L)$\\ \hline
$o_5$&$\begin{bmatrix}x&\cdot&\cdot\\\cdot&y&\cdot\\ \cdot&\cdot&\cdot\end{bmatrix}$ &$[2,\frac{q-1}{2},\frac{q-1}{2},0]$\\
$o_6$&$\begin{bmatrix}x&y&\cdot\\y&\cdot&\cdot\\ \cdot&\cdot&\cdot\end{bmatrix}$  &$[1,q,0,0]$\\
$o_{8,1}$ &$\begin{bmatrix}x&\cdot&\cdot\\\cdot&y&\cdot\\ \cdot&\cdot&-y\end{bmatrix}$ &$[1,1,0,q-1]$\\
$o_{8,2}$ & $\begin{bmatrix}x&\cdot&\cdot\\\cdot&y&\cdot\\ \cdot&\cdot& -\delta y\end{bmatrix}$ &$[1,0,1,q-1]$\\
$o_9$ &$\begin{bmatrix}x&\cdot&y\\\cdot&y&\cdot\\ y&\cdot&\cdot\end{bmatrix}$ &$[1,0,0,q]$\\
$o_{10}$ & $\begin{bmatrix}v_0x&y&\cdot\\y&x+uy&\cdot\\ \cdot&\cdot&\cdot\end{bmatrix}$ &$[0,\frac{q+1}{2},\frac{q+1}{2},0]$\\
$o_{12,1}$&$\begin{bmatrix}\cdot&x&\cdot\\x&\cdot&y\\ \cdot&y&\cdot\end{bmatrix}$&$[0,q+1,0,0]$\\ 
$o_{13,1}$ &$\begin{bmatrix}\cdot&x&\cdot\\x&y&\cdot\\ \cdot&\cdot&-y\end{bmatrix}$ &$[0,2,0,q-1]$\\
$o_{13,2}$ &$\begin{bmatrix}\cdot&x&\cdot\\x&y&\cdot\\ \cdot&\cdot&-\delta y\end{bmatrix}$ &$[0,1,1,q-1]$\\
$o_{14,1}$ &$\begin{bmatrix}x&\cdot&\cdot\\\cdot&-(x+y)&\cdot\\ \cdot&\cdot&y\end{bmatrix}$ &$[0,3,0,q-2]$\\ 
$o_{14,2}$ & $\begin{bmatrix}x&\cdot&\cdot\\\cdot&-\delta(x+y)&\cdot\\ \cdot&\cdot&y\end{bmatrix}$&$[0,1,2,q-2]$\\ 
$o_{15,1}$ & $\begin{bmatrix}v_1y&x&\cdot\\x&ux+y&\cdot\\ \cdot&\cdot&x\end{bmatrix}$&$[0,1,0,q]$\\ 
$o_{15,2}$ & $\begin{bmatrix}v_2y&x&\cdot\\x&ux+y&\cdot\\ \cdot&\cdot&x\end{bmatrix}$&$[0,0,1,q]$\\
$o_{16,1}$ &$\begin{bmatrix}\cdot&\cdot&x\\\cdot&x&y\\ x&y&\cdot\end{bmatrix}$ &$[0,1,0,q]$\\  
$o_{17}$ & $\begin{bmatrix}\alpha^{-1}x&y&\cdot\\y&\beta y-\gamma x&x\\ \cdot&x&y\end{bmatrix}$&$[0,0,0,q+1]$\\ \hline

 \end{tabular}
 \caption{\label{tableoflinesodd}\scriptsize The $K$-orbits of lines in $\PG(5,q)$, $q$ odd. The parameters $u,v_0,v_1,v_2, \alpha, \beta, \gamma, \delta$ in $\Fq$ are defined as follows: $v_i\lambda^2+uv_i\lambda-1 \neq 0$ for all $\lambda\in \Fq$ and $i \in\{0,1,2\}$,
$\lambda^3 + \gamma \lambda^2 - \beta \lambda + \alpha \neq 0$ for all $\lambda \in \Fq$,  $-v_1 \in \square_q$, $-v_2 \not\in \square_q$ and $\delta \not\in \square_q$.}
\end{center}
\end{table}

\begin{table}[!htbp]
\begin{center}
\scriptsize

\begin{tabular}[!htbp]{llll} 
 \hline
$L^K$ & Webs of Conics &$OD_4(L)$\\ \hline
\vspace{0.2cm}
$ o_5$&  $(X_0X_1,X_0X_2,X_1X_2,X_2^2)$&$[1,2q^2+q,0,q^3-q^2]$
 \\ \vspace{0.2cm}

$ o_6$& $(X_0X_2,X_1^2,X_1X_2,X_2^2)$	 & $[q+1, \frac{3q^2+q}{2},\frac{q^2-q}{2}, q^3-q^2]$

\\ \vspace{0.2cm}

$o_{8,1}$  & $(X_0X_1,X_0X_2,X_1X_2,X_1^2+X_2^2)$ & $[2, q^2+\frac{3q-1}{2},\frac{q-1}{2}, q^3-q]$
 \\\vspace{0.2cm}

$o_{8,2}$  & $(X_0X_1,X_0X_2,X_1X_2,\delta X_1^2+X_2^2)$ & $[0,q^2+\frac{3q+1}{2},\frac{q+1}{2},q^3-q]$
\\ \vspace{0.2cm}

$o_9 $  &$(X_0X_1,X_0X_2-X_1^2,X_1X_2,X_2^2)$ &$[1,q^2+q,0,q^3]$
\\\vspace{0.2cm}

$o_{10}$ & $(v_0^{-1}X_0^2+uX_0X_1-X_1^2,X_0X_2,X_1X_2,X_2^2)$& 
$[1,q^2+q,q^2,q^3-q^2]$
 \\\vspace{0.2cm}

  $ o_{12,1}$&	$(X_0^2,X_0X_2,X_1^2,X_2^2)$ & $[q+2,q^2+\frac{q-1}{2}, q^2-\frac{q+1}{2}, q^3-q^2]$
   \\ \vspace{0.2cm}

   $ o_{13,1}$&	$(X_0^2,X_0X_2,X_1^2+X_2^2,X_1X_2)$& $[3,\frac{q^2+3q-2}{2},\frac{q^2+q-2}{2},q^3-q]$
   \\ \vspace{0.2cm}

 $o_{13,2}$ & $(X_0^2,X_0X_2,\delta X_1^2+X_2^2,X_1X_2)$& $[1,\frac{q^2+3q}{2},\frac{q^2+q}{2},q^3-q]$\\\vspace{0.2cm}

 $o_{14,1}$ & $(X_0X_1,X_0X_2,X_0^2+X_1^2+X_2^2,X_1X_2)$&$[4,\frac{q^2-1}{2}+2q-1,\frac{q^2-1}{2}+q-1,q^3-2q]$\\
  \vspace{0.2cm}

 $o_{14,2}$ & $(X_0X_1,X_0X_2,\delta X_0^2+X_1^2+\delta X_2^2,X_1X_2)$&$[0,\frac{q^2+1}{2}+2q,\frac{q^2+1}{2}+q,q^3-2q]$
\\
  \vspace{0.2cm}
    $ o_{15,1}$& $(X_0X_2,X_1X_2,X_0X_1-X_2^2,v_1^{-1}X_0^2+uX_0X_1-X_1^2)$& $[2,\frac{q^2-1}{2}+q,\frac{q^2-1}{2},q^3]$
     \\ \vspace{0.2cm}
     
     $ o_{15,2}$& $(X_0X_2,X_1X_2,X_0X_1-X_2^2,v_2^{-1}X_0^2+uX_0X_1-X_1^2)$&  $[0,\frac{q^2+1}{2}+q,\frac{q^2+1}{2},q^3]$
     \\ \vspace{0.2cm}

     $o_{16,1}$ &$(X_0^2,X_0X_1,X_0X_2-X_1^2,X_2^2)$ &$[2,\frac{q^2-1}{2}+q,\frac{q^2-1}{2},q^3]$
 \\\vspace{0.2cm}
        
    $o_{17}$&$(X_0X_2,X_0X_1-X_2^2,\alpha X_0^2-X_1X_2,\beta X_0X_1-X_1^2-\gamma X_1X_2)$ & $[1,\frac{q^2+q}{2},\frac{q^2-q}{2},q^3+q]$
    \\  \hline

   \end{tabular}

 \caption{\label{tablewebsodd}\scriptsize The $K$-orbits of lines in $\PG(5,q)$ and their equivalent webs of conics in $\PG(2,q)$, $q$ odd. The parameters $u,v_0,v_1,v_2, \alpha, \beta, \gamma, \delta$ in $\Fq$ are defined as follows: $v_i\lambda^2+uv_i\lambda-1 \neq 0$ for all $\lambda\in \Fq$ and $i \in\{0,1,2\}$,
$\lambda^3 + \gamma \lambda^2 - \beta \lambda + \alpha \neq 0$ for all $\lambda \in \Fq$,  $-v_1 \in \square_q$, $-v_2 \not\in \square_q$ and $\delta \not\in \square_q$.}

\end{center}
\end{table}

\begin{table}[!htbp]
\begin{center}
\scriptsize
\begin{tabular}[h]{l l ll} 
 \hline
$L^K$& Representatives&$OD_0(L)$\\ \hline
$o_5$&$\begin{bmatrix}x&\cdot&\cdot\\\cdot&y&\cdot\\ \cdot&\cdot&\cdot\end{bmatrix}$ &$[2,0,q-1,0]$\\
$o_6$&$\begin{bmatrix}x&y&\cdot\\y&\cdot&\cdot\\ \cdot&\cdot&\cdot\end{bmatrix}$  &$[1,1,q-1,0]$\\
$o_{8,1}$ & $\begin{bmatrix}x&\cdot&\cdot\\\cdot&y&\cdot\\ \cdot&\cdot&-y\end{bmatrix}$&$[1,0,1,q-1]$\\
$o_{8,3}$ &$\begin{bmatrix}x&\cdot&\cdot\\\cdot&\cdot&y\\ \cdot&y&\cdot\end{bmatrix}$ &$[1,1,0,q-1]$\\
$o_9$ & $\begin{bmatrix}x&\cdot&y\\\cdot&y&\cdot\\ y&\cdot&\cdot\end{bmatrix}$ &$[1,0,0,q]$\\
$o_{10}$ & $\begin{bmatrix}v_0x&y&\cdot\\y&x+uy&\cdot\\ \cdot&\cdot&\cdot\end{bmatrix}$ &$[0,0,q+1,0]$\\
$o_{12,1}$ &$\begin{bmatrix}\cdot&x&\cdot\\x&\cdot&y\\ \cdot&y&\cdot\end{bmatrix}$&$[0,q+1,0,0]$\\ 
$o_{12,3}$ & $\begin{bmatrix}\cdot&x&\cdot\\x&x+y&y\\ \cdot&y&\cdot\end{bmatrix}$&$[0,1,q,0]$\\ 
$o_{13,1}$ &$\begin{bmatrix}\cdot&x&\cdot\\x&y&\cdot\\ \cdot&\cdot&-y\end{bmatrix}$ &$[0,1,1,q-1]$\\ 
$o_{13,3}$ &$\begin{bmatrix}\cdot&x&\cdot\\x&x+y&\cdot\\ \cdot&\cdot&y\end{bmatrix}$ &$[0,0,2,q-1]$\\ 
$o_{14,1}$ & $\begin{bmatrix}x&\cdot&\cdot\\\cdot&-(x+y)&\cdot\\ \cdot&\cdot&y\end{bmatrix}$&$[0,0,3,q-2]$\\
$o_{15,1}$ &$\begin{bmatrix}v_1y&x&\cdot\\x&ux+y&\cdot\\ \cdot&\cdot&x\end{bmatrix}$ &$[0,0,1,q]$ \\ 
$o_{16,1}$ & $\begin{bmatrix}\cdot&\cdot&x\\\cdot&x&y\\ x&y&\cdot\end{bmatrix}$&$[0,1,0,q]$\\
$o_{16,3}$ &$\begin{bmatrix}\cdot&\cdot&x\\\cdot&x&y\\ x&y&y\end{bmatrix}$& $[0,0,1,q]$\\  
$o_{17}$ & $\begin{bmatrix}\alpha^{-1}x&y&\cdot\\y&\beta y-\gamma x&x\\ \cdot&x&y\end{bmatrix}$&$[0,0,0,q+1]$\\ \hline

 \end{tabular}
 \caption{\label{tableoflineseven} \scriptsize The $K$-orbits of lines in $\PG(5,q)$, $q$ even. The parameters $u,v_0,v_1,\alpha, \beta, \gamma$ in $\Fq$ are defined as follows: $v_i\lambda^2+uv_i\lambda-1 \neq 0$ for all $\lambda\in \Fq$ and $i \in\{0,1\}$ and
$\lambda^3 + \gamma \lambda^2 - \beta \lambda + \alpha \neq 0$ for all $\lambda \in \Fq$.}
\end{center}
\end{table}

\begin{table}[!htbp]
\begin{center}
\scriptsize

\begin{tabular}[!htbp]{ l ll l} 
 \hline
$L^K$ & Webs of Conics &$OD_4(L)$\\ \hline
\vspace{0.2cm}

$ o_5$&  $(X_0X_1,X_0X_2,X_1X_2,X_2^2)$&$[1,2q^2+q,0,q^3-q^2]$
 \\ \vspace{0.2cm}
 
$ o_6$& $(X_0X_2,X_1^2,X_1X_2,X_2^2)$	 & $[q+1, \frac{3q^2+q}{2},\frac{q^2-q}{2}, q^3-q^2]$

\\ \vspace{0.2cm}

$o_{8,1}$  & $(X_0X_1,X_0X_2,X_1X_2,X_1^2+X_2^2)$ &$[1,q^2+\frac{3}{2}q,\frac{q}{2},q^3-q]$
 \\\vspace{0.2cm}

$o_{8,3}$  & $(X_0X_1,X_0X_2,X_1^2,X_2^2)$ &$[q+1,q^2+q,0,q^3-q]$
\\ \vspace{0.2cm}

$o_9 $  &$(X_0X_1,X_0X_2+X_1^2,X_1X_2,X_2^2)$ &$[1,q^2+q,0,q^3]$
\\\vspace{0.2cm}

$o_{10}$ & $(v_0^{-1}X_0^2+uX_0X_1+X_1^2,X_0X_2,X_1X_2,X_2^2)$& $[1,q^2+q,q^2,q^3-q^2]$
 \\\vspace{0.2cm}

  $ o_{12,1}$&	$(X_0^2,X_0X_2,X_1^2,X_2^2)$ &$[q^2+q+1,\frac{q^2+q}{2},\frac{q^2-q}{2},q^3-q^2]$
   \\ \vspace{0.2cm}

   $ o_{12,3}$&	$(X_0^2,X_0X_2,X_0X_1+X_1X_2+X_1^2,X_2^2)$& $[q+1,q^2+\frac{q}{2}, q^2-\frac{q}{2}, q^3-q^2]$
   \\ \vspace{0.2cm}

   $ o_{13,1}$&	$(X_0^2,X_0X_2,X_1^2+X_2^2,X_1X_2)$& $[q+1,\frac{q^2}{2}+q,\frac{q^2}{2},q^3-q]$ 
   \\ \vspace{0.2cm}

 $o_{13,3}$ & $(X_0^2,X_0X_2,X_1^2+X_0X_1+X_2^2,X_1X_2)$& $[1,\frac{q^2+3q}{2},\frac{q^2+q}{2},q^3-q]$
 \\\vspace{0.2cm}

 $o_{14,1}$ & $(X_0X_1,X_0X_2,X_0^2+X_1^2+X_2^2,X_1X_2)$&$[1,\frac{q^2}{2}+2q,\frac{q^2}{2}+q,q^3-2q]$\\
  \vspace{0.2cm}

    $ o_{15,1}$& $(X_0X_2,X_1X_2,X_0X_1+X_2^2,v_1^{-1}X_0^2+uX_0X_1+X_1^2)$& $[1,\frac{q^2}{2}+q,\frac{q^2}{2},q^3]$ 
     \\ \vspace{0.2cm}

     $o_{16,1}$ &$(X_0^2,X_0X_1,X_0X_2+X_1^2,X_2^2)$ & $[q+1,\frac{q^2+q}{2}, \frac{q^2-q}{2},q^3]$
 \\\vspace{0.2cm}

     $o_{16,3}$ & $(X_0^2,X_0X_1,X_0X_2+X_1^2,X_1X_2+X_2^2)$&$[1,\frac{q^2}{2}+q,\frac{q^2}{2},q^3]$
 \\\vspace{0.2cm}

    $o_{17}$&$(X_0X_2,X_0X_1+X_2^2,\alpha X_0^2+X_1X_2,\beta X_0X_1+X_1^2+\gamma X_1X_2)$ & $[1,\frac{q^2+q}{2},\frac{q^2-q}{2},q^3+q]$
    \\  \hline

   \end{tabular}

 \caption{\label{tablewebseven}\scriptsize The $K$-orbits of lines in $\PG(5,q)$ and their equivalent webs of conics in $\PG(2,q)$, $q$ even. The parameters $u,v_0,v_1,\alpha, \beta, \gamma$ in $\Fq$ are defined as follows: $v_i\lambda^2+uv_i\lambda-1 \neq 0$ for all $\lambda\in \Fq$ and $i \in\{0,1\}$ and
$\lambda^3 + \gamma \lambda^2 - \beta \lambda + \alpha \neq 0$ for all $\lambda \in \Fq$.}

\end{center}
\end{table}

\begin{table}[!htbp]
\begin{center}
\scriptsize
\begin{tabular}[h]{ l l l} 
 \hline

 $S^K$ &Representatives   &Conditions\\ \hline\\
   \vspace{3pt}
 $ \Omega_5$  &		$\begin{bmatrix} .&x&y\\x&.&z\\y&z&t           \end{bmatrix}$   &\\
  \vspace{3pt}

$\Omega_6$  &$\begin{bmatrix} .&.&x\\.&y&z\\x&z&t           \end{bmatrix}$   & \\ 

\vspace{3pt}

$\Omega_{8,1}$  &$\begin{bmatrix} .&x&y\\x&z&t\\y&t&z        \end{bmatrix}$  & \\

\vspace{3pt}

 $\Omega_{8,2}$  &$\begin{bmatrix} .&x&y\\x&\gamma z&t\\y&t&z           \end{bmatrix}$  & $\gamma \not\in \square_q$\\

\vspace{3pt}

  $\Omega_{9}$  &$\begin{bmatrix} .&x&y\\x&-y&z\\y&z&t           \end{bmatrix}$ &\\

\vspace{3pt}

  $\Omega_{10}$  &$\begin{bmatrix} x&uvx&y\\uvx&-vx&z\\y&z&t          \end{bmatrix}$   &$(*)$\\

 \vspace{3pt}

$\Omega_{12}$ &  $\begin{bmatrix} x&.&y\\.&z&.\\y&.&t           \end{bmatrix}$  
&\\

\vspace{3pt}

$\Omega_{13,1}$ &$\begin{bmatrix} x&.&y\\.&z&t\\y&t&z          \end{bmatrix}$   &\\

\vspace{3pt}

$\Omega_{13,2}$ &  $\begin{bmatrix} x&.&y\\ .&\gamma z&t\\y&t&z \end{bmatrix}$ 
 & $\gamma \not\in \square_q$\\

\vspace{3pt}

$\Omega_{14,1}$ & $\begin{bmatrix} x&y&z\\y&x&t\\z&t&x           \end{bmatrix}$  &\\ 

\vspace{3pt}

$\Omega_{14,2}$ &  
$\begin{bmatrix} \gamma x&y&z\\y&x&t\\z&t&\gamma x           \end{bmatrix}$ &  $\gamma \not\in \square_q$\\  

\vspace{3pt}

$\Omega_{15,1}$ &  $\begin{bmatrix} x&y&z\\y&-v_1x&t\\z&t&-y+uv_1x          \end{bmatrix}$ 
 &$(*)$, $-v_1 \in \square_q$\\

\vspace{3pt}

$\Omega_{15,2}$ &  $\begin{bmatrix} x&y&z\\y&-v_2x&t\\z&t&-y+uv_2x          \end{bmatrix}$ 
 &$(*)$, $-v_2 \not\in \square_q$\\

 \vspace{3pt}
$\Omega_{16}$ & $\begin{bmatrix} x&y&z\\y&-z&.\\z&.&t           \end{bmatrix}$  
  & \\ 

\vspace{3pt}

$\Omega_{17}$ &  $\begin{bmatrix} \alpha\gamma z-\alpha t&x&y\\x&z&t\\y&t&-x-\beta z \end{bmatrix}$    &$(**)$\\

\hline
 \end{tabular}
 \caption{\label{solidsodd}{\scriptsize The $K$-orbits of solids in $\PG(5,q)$, $q$ odd, and their representatives. Condition $(*)$ is: $v\lambda^2+uv\lambda-1 \neq 0$ for all $\lambda\in \Fq$, where $v\in \{v_1, v_2\}$ in the case $\Omega_{15}$. Condition $(**)$ 
is: $\lambda^3 + \gamma \lambda^2 - \beta \lambda + \alpha \neq 0$ for all $\lambda \in \Fq$.}}

\end{center}

\end{table}

\end{appendices}

\end{document}